\pgfplotsset{compat=1.18}
\newcommand{\Haus}{\dim_{\mathrm{H}}}
\newtheorem*{thm*}{Theorem}
\newtheorem*{conj*}{Conjecture}
\newtheorem*{ques*}{Question}
\newtheorem*{rem*}{Remark}
\newtheorem*{defn*}{Definition}
\newtheorem*{mainques*}{Main questions}
\newtheorem{thm}{Theorem}[section]
\newtheorem{lma}[thm]{Lemma}
\newtheorem{cor}[thm]{Corollary}
\newtheorem{defn}[thm]{Definition}
\newtheorem{prop}[thm]{Proposition}
\newtheorem{conj}[thm]{Conjecture}
\newtheorem{claim}[thm]{Claim}
\newtheorem{rem}[thm]{Remark}
\newtheorem{ques}[thm]{Question}
\newtheorem{exm}[thm]{Example}
\def\R{\mathbb{R}}
\def\RR{\mathbb{R}}
\def\CC{\mathbb{C}}
\def\ZZ{\mathbb{Z}}
\def\NN{\mathbb{N}}
\def\supp{\mathrm{supp}}
\def \bxi {{\boldsymbol{\xi}}}
\def \bzero {{\mathbf{0}}}
\def \bN {{\mathbb N}}
\def \bR {{\mathbb R}}
\def \epsilon {{\varepsilon}}
\def \bt {{\mathbf{t}}}
\def \bzero {{\boldsymbol{0}}}
\def \bN {\mathbb N}
\def \bP {\mathbb P}
\def \bR {\mathbb R}
\def \ba {\mathbf a}
\def \bt {\mathbf t}
\def \bv {\mathbf v}
\def \bx {\mathbf x}
\def \by {\mathbf y}
\def \bzero {\mathbf 0}
\def \bxi {{\boldsymbol{\xi}}}
\def \Beta {{\boldsymbol{\eta}}}
\def \cD {\mathcal D}
\def \leq {\leqslant}
\def \geq {\geqslant}
\def \dim {\mathrm{dim}}
\def \det {\mathrm{det}}
\def \vol {\mathrm{vol}}
\def \supp {{\mathrm{supp}}}
\def \ds1 {\mathds{1}}
\def \epsilon {{\varepsilon}}
\numberwithin{equation}{section}
\title[Quantitative Fourier decay]{Fourier transform of nonlinear images of self-similar measures: quantitative aspects}
\author{Amlan Banaji}
\address{Amlan Banaji, Department of Mathematics and Statistics, P.O. Box 35 (MaD), FI-40014 University of Jyv\"askyl\"a, Finland}
\curraddr{}
\email{banajimath@gmail.com}
\author{Han Yu}
\address{Han Yu, College of Mathematics and Statistics, Centre of Mathematics, Chongqing University, Chongqing, 401331, China}
\curraddr{}
\email{han.yu.2@cqu.edu.cn}
\thanks{}
\subjclass[2020]{42A16 (primary), 28A80 (secondary)}
\keywords{Fourier decay, self-similar measures, nonlinear arithmetic}
\begin{document}

\makeatletter
\providecommand\@dotsep{5}
\makeatother

\begin{abstract}
   This paper relates to the Fourier decay properties of images of self-similar measures $\mu$ on $\mathbb{R}^k$ under nonlinear smooth maps $f \colon \mathbb{R}^k \to \mathbb{R}$. 
   For example, we prove that if the linear parts of the similarities defining $\mu$ commute and the graph of $f$ has nonvanishing Gaussian curvature, then the Fourier dimension of the image measure is at least $\max\left\{ \frac{2(2\kappa_2 - k)}{4 + 2\kappa_* - k} , 0 \right\}$, where $\kappa_2$ is the lower correlation dimension of $\mu$ and $\kappa_*$ is the Assouad dimension of the support of $\mu$. Under some additional assumptions on $\mu$, we use recent breakthroughs in the fractal uncertainty principle to obtain further improvements for the decay exponents. 
   
     We give several applications to nonlinear arithmetic of self-similar sets $F$ in the line. 
   For example, we prove that if $\dim_{\mathrm H} F>(\sqrt{65}-5)/4=0.765\dots$ then the arithmetic product set $F\cdot F = \{xy:x,y\in F\}$ has positive Lebesgue measure, while if $\dim_{\mathrm H} F>(-3+\sqrt{41})/4=0.850\dots$ then $F\cdot F\cdot F$ has non-empty interior. One feature of the above results is that they do not require any separation conditions on the self-similar sets.
\end{abstract}

\maketitle

\tableofcontents

\section{Introduction}\label{sec: intro}

\subsection{Main result}
 In this paper, the main topic is about Fourier decay of nonlinear images of self-similar measures, a topic pioneered by Kaufman~\cite{K82}. Recall that the Fourier transform of a Borel probability measure $\mu$ supported on $\RR^k$ is the bounded continuous function $\widehat{\mu} \colon \RR^k \to \CC$ given by 
\begin{equation}\label{e:ftdef}
    \widehat{\mu}(\bxi) = \int_{\RR^k} e^{-2 \pi i \langle \bxi,\bx \rangle} d \mu(\bx) 
\end{equation}
where $\langle \cdot,\cdot \rangle$ is the Euclidean inner product. 
The question of which measures have Fourier transform decaying to $0$ as $|\bxi| \to \infty$, and the speed of decay if so, has received a great deal of attention in the literature. 
If $|\widehat{\mu}(\bxi)| \to 0$ as $|\bxi| \to \infty$ then $\mu$ is called \emph{Rajchman}, and if there exist $C,\sigma > 0$ such that $|\widehat{\mu}(\bxi)| \leq C |\bxi|^{-\sigma}$ for all $\bxi \in \RR^k \setminus \{ \mathbf{0} \}$ then $\mu$ is said to have \emph{polynomial Fourier decay} (or \emph{power Fourier decay}). For such a measure $\mu$, one is often interested in providing quantitative lower bounds for its \emph{Fourier dimension}, which is defined by 
\[ \dim_{\mathrm F} \mu \coloneqq \sup\{ \sigma \geq 0 : \exists C>0 \mbox{ s.t. } \forall \bxi \in \mathbb{R}^k \setminus \{ \mathbf{0} \}, |\widehat{\mu}(\bxi)| \leq C |\bxi|^{-\sigma/2} \}. \]
In this paper, we seek to provide a quantitative theory regarding the Fourier decay for nonlinear images of self-similar measures under maps $\mathbb{R}^k\to\mathbb{R}$. \footnote{In the subsequent article~\cite{BYqualitative}, we provide a qualitative theory (i.e. without quantifying the decaying exponent) regarding the Fourier decay for nonlinear images under general non-degenerate maps $\mathbb{R}^k\to\mathbb{R}^d$.}

We now introduce our main result. Denote by $\mu_f$ the pushforward of a measure $\mu$ by a function $f$, defined by $\mu_f(B) = \mu(f^{-1}(B))$ for Borel sets $B \subseteq \RR$. 
The Fourier transform of $\mu_f$ is denoted $\widehat{\mu_f}$. 
The other terminology and notation used in the statement of Theorem~\ref{thm: image fourier decay} will be described briefly after the statement of the theorem and then in detail in Section~\ref{sec: pre}. 

\begin{thm}\label{thm: image fourier decay}
Let $k\geq 1$ be an integer. Let $\mu$ be a non-expanding self-similar measure on $\mathbb{R}^k$ with $\kappa_2 > k/2$. 
Let $f\colon \mathbb{R}^k\to\mathbb{R}$ be a twice continuously differentiable ($C^2$) function so that the graph $\Gamma_f=\{(\bx,f(\bx)) : \bx\in\mathbb{R}^k \} \subset \RR^{k+1}$ has non-vanishing Gaussian curvature over the support of $\mu$. 
Then there is some explicit $\sigma > 0$ such that the pushforward measure $\mu_f$ under $f$ satisfies 
\[ 
|\widehat{\mu_f}(\xi)|\ll |\xi|^{-\sigma} \qquad \mbox{ for all } \xi \in \RR \setminus \{0\}. 
\]
More precisely, one can choose arbitrarily 
\[
0 < \sigma < \max\left\{  \frac{2\kappa_2 - k}{4 + 2\kappa_* - k}  , \frac{d_{\infty} + \kappa_1 - k}{2 - k + 2\kappa_* + \kappa_1 - d_{\infty}}  \right\}.
\]
\end{thm}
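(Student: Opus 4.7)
The plan is to follow Kaufman's paradigm for Fourier decay of nonlinear images of fractal measures, in its quantitative self-similar incarnation. At a high level, for a given frequency $\xi\in\RR$, the idea is to decompose $\mu$ via the IFS into cylinders of a scale $\rho=|\xi|^{-\beta}$ (with $\beta$ to be optimized), linearize $f$ on each cylinder using
\[ f(\bx) = f(\bx_{\mathbf{i}}) + \langle \nabla f(\bx_{\mathbf{i}}), \bx-\bx_{\mathbf{i}}\rangle + O(\rho^2), \]
and thereby reduce $\widehat{\mu_f}(\xi)$ to a weighted sum of values of the Euclidean Fourier transform $\widehat{\mu}$ at the \emph{effective frequencies} $\rho_{\mathbf{i}}\cdot\xi\,\nabla f(\bx_{\mathbf{i}})$, up to a quadratic phase error. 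The non-vanishing Gaussian curvature of $\Gamma_f$ is equivalent to non-singularity of the Hessian of $f$, so $\bx\mapsto \nabla f(\bx)$ is a local diffeomorphism on $\supp\mu$; this is what converts the pointwise question about $\widehat{\mu_f}(\xi)$ into an averaged question about $\widehat{\mu}$, which is then controllable by the dimension theory of $\mu$.

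Concretely, write $\mu=\sum_{\mathbf{i}\in\Lambda^n}p_{\mathbf{i}}\,(g_{\mathbf{i}})_{*}\mu$ with $\rho_{\mathbf{i}}\approx\rho$. The non-expanding hypothesis should guarantee that, up to translations and a common orthogonal action, $\widehat{(g_{\mathbf{i}})_*\mu}(\eta) = e^{-2\pi i\langle\eta,\bx_{\mathbf{i}}\rangle}\,\widehat{\mu}(\rho_{\mathbf{i}}O_{\mathbf{i}}^{-1}\eta)$, so Cauchy--Schwarz yields
\[ |\widehat{\mu_f}(\xi)|^2 \;\lesssim\; \sum_{\mathbf{i}} p_{\mathbf{i}}\,\bigl|\widehat{\mu}\bigl(\rho\,\xi\,\nabla f(\bx_{\mathbf{i}}))\bigr|^2 \;+\; (\mbox{quadratic error}). \]
The Hessian hypothesis implies that the $\rho\xi\nabla f(\bx_{\mathbf{i}})$ sweep out a piece of an annulus of radius $\approx\rho|\xi|$ in $\RR^k$ with local multiplicity bounded by an Assouad-type count $\rho^{-\kappa_*}$. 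Converting this $\ell^2$-sum into an annular integral and inserting the standard bound $\int_{|\eta|\le R}|\widehat{\mu}(\eta)|^2\,d\eta\lesssim R^{k-\kappa_2}$ (which is precisely what the lower correlation dimension $\kappa_2>k/2$ provides), then optimizing $\beta$, yields the first exponent $(2\kappa_2-k)/(4+2\kappa_*-k)$.

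The second exponent is obtained by replacing the Cauchy--Schwarz / $\ell^2$ averaging by a sharper $\ell^\infty$-type argument: individual summands are bounded using the pointwise decay $|\widehat{\mu}(\eta)|\lesssim|\eta|^{-d_\infty/2}$ coming from the $L^\infty$-dimension $d_\infty$, and the total weighted count of relevant cylinders is sharpened from $\rho^{-\kappa_*}$ to $\rho^{-\kappa_1}$ via the information dimension $\kappa_1$ (the weights $p_{\mathbf{i}}$ do better than the worst case suggested by $\kappa_*$); re-optimizing $\beta$ produces the second term of the maximum. The main technical obstacle is controlling the quadratic error $O(|\xi|\rho^2)$ from the linearization: a naive $L^\infty$ bound would force $\rho\le |\xi|^{-1/2}$ and cap the attainable exponent well below the stated values, so to reach them one must absorb the quadratic phase via smooth cutoffs, a stationary-phase-type oscillatory integral estimate, or a further sub-decomposition of the cylinders, and then check that this absorption interacts cleanly with both the $\ell^2$ and the $\ell^\infty$ averaging schemes.
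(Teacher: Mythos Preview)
Your high-level outline (decompose $\mu$ into cylinders at scale $\rho$, linearise $f$, use non-degeneracy of the Hessian to separate the effective frequencies, then average $|\widehat\mu|$) is the paper's strategy. But two of your key assertions are wrong, and they are precisely the ones that carry the quantitative content.

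\medskip

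\textbf{The second exponent.} Your $\ell^\infty$ scheme is based on the pointwise bound $|\widehat\mu(\eta)|\lesssim|\eta|^{-d_\infty/2}$, which is false: self-similar measures need not have \emph{any} pointwise Fourier decay (the Cantor--Lebesgue measure is not even Rajchman). You have misidentified both quantities. Here $d_\infty$ is the \emph{Frostman exponent} --- it controls $\mu(B_r(\bx))$, not $\widehat\mu$ --- and $\kappa_1$ is the \emph{Fourier $\ell^1$-dimension}, meaning $\int_{|\eta|\le R}|\widehat\mu(\eta)|\,d\eta\ll R^{k-\kappa_1}$, not an information dimension. In the paper both exponents come from a single H\"older step applied to the sum $\sum_{D}\bigl(\sum_{\omega\sim D}p_\omega\bigr)\,|\widehat\mu(\eta_D)|$ with conjugate exponents $(q,p)$: the weight factor $(\sum_D\mu(D)^q)^{1/q}$ contributes the $L^q$-dimension $d_q=d_{p/(p-1)}$ (so $\kappa_2$ for $p=2$, and $d_\infty$ for $p=1$), while the Fourier factor is controlled by the $\ell^p$-dimension $\kappa_p$. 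Your Cauchy--Schwarz with the probability weights $p_{\mathbf i}$ inside is not the same manoeuvre and does not obviously produce the $\kappa_2$ in the first exponent either.

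\medskip

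\textbf{The quadratic error.} The ``main technical obstacle'' you describe is a phantom. The paper uses exactly the naive $L^\infty$ bound $O(|\xi|\rho^2)$ on the linearisation error and still obtains the stated exponents; no stationary phase, smooth cutoffs, or further sub-decomposition are used. In the paper's parametrisation $|\xi|=2^{\gamma n}$, $\rho\asymp 2^{-n}$, the error is $2^{-(2-\gamma)n}$, and the optimum lies at some $1<\gamma<2$, so the constraint $\rho<|\xi|^{-1/2}$ is not binding. What you \emph{are} missing is the device that converts the discrete sum $\sum_D|\widehat\mu(\eta_D)|^p$ into the integral $\int_{|\eta|\ll\rho|\xi|}|\widehat\mu(\eta)|^p\,d\eta$: one first replaces each pointwise value $|\widehat\mu(\eta_D)|$ by a local average (using only that $\mu$ has compact support), and then controls the overlap multiplicity of these local balls via the Assouad dimension of $\supp\mu$; this is where $\kappa_*$ enters.

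\medskip

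Finally, your reading of the non-expanding hypothesis is off: it does not give a common orthogonal part, only that the number of distinct orthogonal parts at level $n$ grows sub-exponentially, which is enough to sum the homogeneous estimate over each orthogonal class with negligible loss.
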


\subsubsection*{A brief note on terminologies}
\begin{itemize}
    \item Here, $\kappa_2$ is the lower correlation dimension of $\mu$, $d_{\infty}$ is the Frostman exponent of $\mu$, $\kappa_1$ is the $l^1$-dimension of $\mu$, and $\kappa_*$ is the Assouad dimension of $\supp(\mu)$. 
    We make no separation assumptions on the self-similar measure. 
    For a general self-similar measure, $\mu$, $k \geq \kappa_* \geq \kappa_2 \geq d_{\infty} \geq \kappa_1$ (all are positive unless $\mu$ is a single atom). If $\mu$ is $s$-Ahlfors--David-regular then $s = \kappa_* = \kappa_2 = d_{\infty} \geq \kappa_1$, and if $s>1/2$ then we emphasise that our bound on $\sigma$ depends only on $s$ and not on the multiplicative constants of Ahlfors-regularity. 

    \item A self-similar measure is non-expanding if its semigroup of orthogonal transformations satisfies certain growth restrictions (for example if they commute). All self-similar measures in $\mathbb{R}^k, k \in \{1,2\}$ are non-expanding. 
    It may be challenging to prove good quantitative Fourier decay estimates for pushforwards of expanding self-similar measures, though we obtain non-quantitative results in~\cite{BYqualitative}. 
    
    \item We use Vinogradov and Bachmann--Landau notation: given complex-valued functions $f,g$ we write $f \ll g$ or $f = O(g)$ to mean $|f| \leq C|g|$ pointwise for some constant $C > 0$, and write $f \asymp g$ if $f \ll g$ and $g \ll f$. Subscripts may indicate parameters that the implicit constants are allowed to depend on. 
\end{itemize}

\subsection{Remarks about Theorem \ref{thm: image fourier decay}}\label{ss: remarksaboutmain}
We first describe how Theorem~\ref{thm: image fourier decay} relates to the existing literature. Sahlsten's survey~\cite{Sahlsten23survey} gives an overview of the topic of Fourier decay for fractal measures. 
For self-similar measures the problem is difficult, with a history going back to Erd\H{o}s~\cite{Erdos1,Erdos2}. 
There are many self-similar measures with polynomial Fourier decay, and many others which are not even Rajchman, but we will not go into details because in this paper we are concerned with \emph{nonlinear} fractal measures, where one often expects polynomial Fourier decay. 
Polynomial Fourier decay for pushforwards of self-similar measures by nonlinear maps $\RR \to \RR$ has been studied by Kaufman~\cite{K82}, Mosquera and Shmerkin~\cite{MS18}, and Algom, Chang, Wu and Wu~\cite{ACWW25}. 
Mosquera and Olivo \cite[Theorem~3.1]{MosqueraOlivo} consider pushforwards of homogeneous self-similar measures with non-trivial rotations under nonlinear holomorphic maps $\CC \to \CC$. 
Baker and Banaji~\cite{BB25} consider pushforwards of a class of measures which they call fibre product measures on $\RR^k$ under nonlinear maps $\RR^k \to \RR$. 
Theorem~\ref{thm: image fourier decay} builds on these results in the following ways. 
\begin{rem}
    \begin{itemize}
        \item For nonlinear pushforwards of self-similar measures on $\RR^k$ which do not satisfy the non-trivial fibre condition in~\cite{BB25}, even non-quantitative Fourier decay was not previously known until Theorem~\ref{thm: image fourier decay}. 
        \item When $k > 1$, Theorem~\ref{thm: image fourier decay} is the first \emph{quantitative} Fourier decay result for pushforwards of self-similar measures $\RR^k \to \RR$. 
        \item In the very special case of homogeneous self-similar measures on the line, Mosquera and Shmerkin~\cite{MS18} obtained bounds on the exponent of decay. 
        As we will see in Example~\ref{exm: cantor lebesgue} below, Theorem~\ref{thm: image fourier decay} substantially improves these bounds. Nonetheless, our bounds are likely not optimal; some discussion is given in Section~\ref{ss: optimality}. 
        
    \end{itemize}
\end{rem}

A class of AD-regular (non-Rajchman) self-similar measures called missing digit measures arise from a generalised Cantor construction. 
The most famous example is the Cantor--Lebesgue measure on it (which is the self-similar measure on the middle-third Cantor set with weights $(1/2,1/2)$). For those measures, much is known about their $l^1$ and $l^2$-dimensions, which helps us to make the most of Theorem~\ref{thm: image fourier decay}. In those cases (and several others), we can use fractal uncertainty principles (FUP) to obtain $\varepsilon$-improvements over the bounds from Theorem~\ref{thm: image fourier decay}, see Section~\ref{ss:fup} for more details.

\begin{exm}[Cantor--Lebesgue measure]\label{exm: cantor lebesgue}
If $\mu$ is the Cantor--Lebesgue measure then it was shown in~\cite{CVY} that $\kappa_1 < 1/2$, so we use the $l^2$ bound. 
We have $\kappa_* = \kappa_2 = d_{\infty} = \log 2 / \log 3 > 1/2$, so $\frac{2\kappa_2 - k}{4 + 2\kappa_* - k} = 0.061\dots$. 
In fact, we can use a FUP to show that there exists $\upsilon > 0$ (depending only on $\mu$ but not on $f$) such that the pushforward decays with exponent at least $0.061\dots + \upsilon$. 
For comparison, the bound from~\cite{MS18} is $0.016$. 
\end{exm}

\begin{exm}[Missing digit measures]
Fix $b \geq 4$, divide the interval $[0,1]$ into pieces of size $1/b$ and choose $b-1$ of the pieces corresponding to a set $D \subset \{0,\dotsc,b-1\}$ (in the case $b=4$ we additionally assume that $D = \{0,1,2\}$ or $D = \{1,2,3\}$). Let $\mu$ be the self-similar measure with equal weights corresponding to the IFS of maps sending $[0,1]$ to each of the $b-1$ intervals. Then $\mu$ is non-Rajchman and $d_{\infty} = \kappa_2 = \kappa_* = \log(b-1)/\log b$. 

In this case the $l^2$ estimate proves that the exponent of Fourier decay for the image of $\mu$ under $x \mapsto x^2$ can be made close to $\frac{2 \log (b-1) - \log b}{2\log (b-1) + 3\log b}$. 
But Chow, Varj\'u and Yu~\cite[Proposition~2.4]{CVY} have recently shown that $\kappa_1 > 1/2$, so the $l^1$ estimate in fact gives that there exists $\epsilon_{b} > 0$ such that the true exponent is at least $\frac{2 \log (b-1) - \log b}{2\log (b-1) + 3\log b} + \epsilon_{b}$. Moreover, it is shown in \cite[Theorem~2.6]{CVY} that $\kappa_1 \to 1$ as $b \to \infty$, so a lower bound for the Fourier decay exponent is $1/3 - o_b(1)$. 
For comparison, if the Fourier transform of these measures decays at the fastest possible rate then the Fourier decay exponent would tend to $1/2$ as $b \to \infty$. 
\end{exm}

\subsection{Obstruction for Theorem \ref{thm: image fourier decay} for small self-similar measures}\label{sec: large linear space} 
If $\kappa_2 \leq k/2$ then Theorem~\ref{thm: image fourier decay} does not apply. 
This is not just due to our incapability. 
The key fact is that Theorem~\ref{thm: image fourier decay} allows the measure to be supported in a proper affine subspace of $\RR^k$ and thus some geometrical obstructions can occur. 
Indeed, for $k=2$, consider hyperboloids in $\mathbb{R}^3$ with negative curvatures. 
Some hyperboloids are `ruled surfaces:' they contain (many) lines. For larger $k$, let $F\subset\mathbb{R}^{k+1}$ be a smooth hypersurface with non-vanishing Gaussian curvature. It is known that $F$ cannot contain any affine subspace with dimension bigger than $k/2$. On the other hand, some surfaces with non-vanishing Gaussian curvature contain large affine subspaces. Consider for example the following algebraic hypersurface with an even number $k>0$,
    \[
    F_k=\{x^3_1+\dots+x^3_{k+1}=1\}.
    \]
    Then the linear subspace $$L_k:\{x_1=-x_2,x_3=-x_4,\dots,x_{k-1}=-x_{k}, x_{k+1}=1\}$$ is contained in $F_k$. This subspace has dimension $k+1-(k/2)-1=k/2$. If $k$ is odd, then $F_k$ contains
    \[
    L_k:\{x_1=-x_2,x_3=-x_4,\dots,x_{k-2}=-x_{k-1}, x_{k}=2^{-1/3},x_{k+1}=2^{-1/3}\}
    \]
    which has dimension $k+1-(k-1)/2-2=(k-1)/2$. The Gaussian curvature of $F_k$ vanishes only if $x_1 x_2\dotsm x_k=0$. Therefore, $L_k$ contains some non-trivial open sets on which $F_k$ has non-vanishing Gaussian curvature. From here we see that there is a non-trivial open set $U$ in $\mathbb{R}^k$ so that $F_k\cap (U\times\mathbb{R})$ is the graph of some nonlinear function $f$. The projection $\pi(L_k)$ of $L_k$ to the first $k$-coordinates satisfy the property that $\pi(L_k)\cap U$ contains an non-trivial open set of $\pi(L_k)$ which is a linear subspace of dimension $[k/2]$. 
    Consider any self-similar measure $\mu$ supported in $\pi(L_k)\cap U$. 
    We see that $f|_{\supp(\mu)}$ is in fact linear, although $f$ is itself a nonlinear function. Thus in general there is no hope to prove polynomial Fourier decay for $\mu_f$, which is a linear copy of $\mu$ (which may not even be Rajchman). 

    Since the obstruction described above does not apply when $k$ is odd and $\mu$ is $k/2$-dimensional, it is natural to ask whether one can expect polynomial Fourier decay for nonlinear pushforwards of such measures. Indeed, in Section~\ref{ss:fup} we will use a FUP due to Cladek and T.~Tao~\cite{CladekTaoFUP} that applies when the ambient spatial dimension is odd\footnote{If $k$ were even then there is no nontrivial FUP for general $k/2$-AD-regular sets, see \cite[Example~6.1]{DyatlovFUPsurvey}.} to prove the following. 
    \begin{thm}\label{thm: cladektao}
    Let $k \geq 1$ be an odd integer and let $\mu$ be a $k/2$-AD-regular non-expanding self-similar measure on $\RR^k$. 
    Let $f \colon \RR^k \to \RR$ be a $C^2$ map whose graph has non-vanishing Gaussian curvature over $\supp(\mu)$. Then $\mu_f$ has polynomial Fourier decay. 
    \end{thm}
    
    If $\mu$ is assumed not to be supported in any affine subspace, then there is no obstruction even if $\mu$ has small dimension (and regardless of the parity of $k$). 
    We use a FUP recently proved by Backus, Leng and Z.~Tao~\cite{BLTfup}\footnote{The $k=1$ case was due to Dyatlov and Jin~\cite{DJfupDolgopyat}.} to prove the following, which is a special case of the more general Conjecture~\ref{conj: dream conjecture} below. 
    \begin{thm}\label{thm: blt}
    Let $0<s\leq k$ and let $\mu$ be a non-expanding $s$-AD-regular self-similar measure on $\RR^k$ which is not supported in any proper affine subspace of $\RR^k$. 
    Let $f \colon \RR^k \to \RR$ be a $C^2$ map whose graph has non-vanishing Gaussian curvature over $\supp(\mu)$. 
    Then $\mu_f$ has polynomial Fourier decay. 
    \end{thm}

\subsection{More general pushforward maps}
Although the main focus of this paper is Fourier decay of images of self-similar measures under maps $\mathbb{R}^k\to\mathbb{R}$, the methods are viable for proving quantitative Fourier decay results in special cases regarding maps $\mathbb{R}^k\to\mathbb{R}^d$ for general $d\geq 1$, such as quadratic or holomorphic maps. 
In particular, for holomorphic functions, we can prove an analogue of van der Corput's lemma, which is a fundamental result in harmonic analysis. We first state a case of the classical result for comparison. 
\begin{lma}[van der Corput's lemma \cite{CarberyWrightVDC}]
    Let $l \geq 2$ be an integer and let $f \colon (0,1) \to \RR$ be smooth with $f^{(l)}(x) \geq 1$ for all $x \in (0,1)$. 
    Then 
    \[
    \Big| \int_0^1 e^{-2 \pi i \xi f(x)} dx \Big| \ll |\xi|^{-1/l}. 
    \]
\end{lma}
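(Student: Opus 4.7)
The plan is to prove this classical estimate by induction on $l \geq 2$, together with a preliminary ``monotone-phase'' lemma that serves as the base of the induction.

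\textbf{Auxiliary lemma.} I would first prove the following: if $g \colon [a,b] \to \RR$ is $C^1$ with $g'$ monotonic and $|g'(x)| \geq \lambda > 0$ throughout, then $|\int_a^b e^{-2\pi i \xi g(x)} dx| \ll (|\xi|\lambda)^{-1}$. The argument is a single integration by parts: write $e^{-2\pi i \xi g} = (-2\pi i \xi g')^{-1} \tfrac{d}{dx}(e^{-2\pi i \xi g})$, so that the boundary terms are each $O((|\xi|\lambda)^{-1})$, and the remaining integrand equals $(2\pi i \xi)^{-1} e^{-2\pi i \xi g}(1/g')'$. Since $g'$ is monotonic and nonvanishing, $1/g'$ is monotonic, hence the interior integral is bounded by $|\xi|^{-1} |(1/g')(b) - (1/g')(a)| \leq 2(|\xi|\lambda)^{-1}$.

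\textbf{Induction step.} Fix $\delta \in (0,1)$ to be optimised later. Since $f^{(l)} \geq 1$, the derivative $f^{(l-1)}$ is strictly increasing with slope at least $1$, so the sublevel set $I_\delta := \{x \in (0,1) : |f^{(l-1)}(x)| < \delta\}$ is an interval of length at most $2\delta$. On $I_\delta$ I use the trivial bound $|\int_{I_\delta} e^{-2\pi i \xi f(x)} dx| \leq 2\delta$. On each of the (at most two) complementary intervals $J$, the function $f^{(l-1)}$ has constant sign and $|f^{(l-1)}| \geq \delta$, so rescaling $h = \pm f/\delta$ gives $h^{(l-1)} \geq 1$ on $J$, and
\[
\int_J e^{-2\pi i \xi f(x)} \, dx = \int_J e^{\mp 2\pi i (\xi \delta) h(x)} \, dx.
\]
If $l = 2$, then $h'$ is monotonic (since $h'' \geq 1$), so the auxiliary lemma applies and gives $|\int_J| \ll (|\xi|\delta)^{-1}$; if $l \geq 3$, the inductive hypothesis at level $l-1$ (which has no monotonicity requirement) gives $|\int_J| \ll_l (|\xi|\delta)^{-1/(l-1)}$. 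Combining the two contributions, $|\int_0^1 e^{-2\pi i \xi f(x)} dx| \ll_l \delta + (|\xi|\delta)^{-1/(l-1)}$, and the choice $\delta = |\xi|^{-1/l}$ balances the two terms to yield the claimed bound $|\xi|^{-1/l}$.

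\textbf{Main obstacle.} The delicate point is the bookkeeping of the rescaling: dividing $f$ by $\delta$ converts $|f^{(l-1)}| \geq \delta$ into the normalised hypothesis $h^{(l-1)} \geq 1$ while converting the frequency from $\xi$ to $\xi\delta$, and it is precisely this conversion that produces the $(|\xi|\delta)^{-1/(l-1)}$ factor which the choice $\delta = |\xi|^{-1/l}$ then balances against the trivial $\delta$-contribution from $I_\delta$. The induction closes cleanly because the statement at level $l-1 \geq 2$ already requires no monotonicity of $f'$, so the auxiliary lemma is invoked only at the base case $l = 2$, where monotonicity of $f'$ is automatic from $f'' \geq 1$.
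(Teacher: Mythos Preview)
Your argument is the standard textbook proof of van der Corput's lemma (essentially the one in Stein's \emph{Harmonic Analysis}, Chapter VIII), and it is correct. One minor point worth making explicit: the inductive hypothesis is applied on subintervals $J \subset (0,1)$ rather than on $(0,1)$ itself, so you are implicitly using that the implicit constant in the estimate depends only on $l$ and not on the length or position of the interval of integration. This is true and is the usual formulation, but since the lemma as stated fixes the interval to be $(0,1)$, you should either reformulate the inductive statement to allow arbitrary intervals, or note that an affine change of variable reduces $J$ to $(0,1)$ while preserving the hypothesis $h^{(l-1)} \geq 1$ (the Jacobian factor is at most $1$ since $|J| \leq 1$).

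As for comparison with the paper: the paper does not prove this lemma. It is stated with a citation to Carbery--Wright \cite{CarberyWrightVDC} purely as a classical point of reference, to motivate the analogous fractal-measure result Theorem~\ref{thm: complexvdc} that follows. So there is no ``paper's own proof'' to compare against; you have simply supplied the classical argument that the paper takes for granted.
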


\begin{thm}\label{thm: complexvdc}
    Let $\mu$ be a self-similar measure on $\CC$ (which we regard as $\RR^2$) with $\kappa_2 > 1$, and let $U$ be an open neighbourhood of $\supp(\mu)$. 
    Let $l \geq 2$ be an integer and let $f \colon U \to \CC$ be holomorphic with $f^{(l)}(z) \neq 0$ for all $z \in \supp(\mu)$. 
    Then for $\bxi \in \RR^2 \setminus \{0\}$, 
    \[ 
    |\widehat{\mu_f}(\bxi)| = \Big| \int_{\CC} e^{-2 \pi i \langle \bxi,f(z)\rangle} d\mu(z) \Big| \ll |\bxi|^{-\frac{d_{\infty}(\kappa_2 - 1)}{\kappa_* l}}.
    \]
\end{thm}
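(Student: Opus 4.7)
The plan adapts the scheme used to prove Theorem \ref{thm: image fourier decay}, replacing the role of non-vanishing Gaussian curvature by the much stronger rigidity of holomorphic $f$: the hypothesis $f^{(l)} \neq 0$ on $\supp(\mu)$ lets one substitute the degree-$l$ Taylor polynomial of $f$ on small balls with holomorphic remainder $O(r^{l+1})$ (via Cauchy's estimate on a neighbourhood $U$ of $\supp(\mu)$), which is far better than any $C^{l+1}$-remainder would give.

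First, I fix a generation of the iterated IFS such that all surviving words $\omega$ have contraction ratio $r_\omega \asymp r$, and decompose $\mu = \sum_\omega p_\omega (S_\omega)_*\mu$ with $S_\omega(w) = r_\omega R_\omega w + z_\omega$. After the change of variables $z = S_\omega(w)$ on each piece, I Taylor expand $f(S_\omega w)$ to degree $l$ around $w = 0$; choosing $r$ so that $|\bxi|\, r^{l+1} = O(1)$ renders the phase error negligible, and the integral becomes (up to a harmless modulus-one factor)
\[
\widehat{\mu_f}(\bxi) \;\approx\; \sum_\omega p_\omega \, e^{-2\pi i \langle\bxi,f(z_\omega)\rangle} \int_{\CC} e^{-2\pi i \Re(\eta \widetilde P_\omega(w))}\, d\mu(w),
\]
where each $\widetilde P_\omega$ is a degree-$l$ polynomial whose top coefficient has modulus $\asymp r^l$, using that $|f^{(l)}|$ is bounded away from zero on $\supp(\mu)$.

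The core task is to bound the inner polynomial-phase integral $J_\omega$ for each $\omega$. I would apply Cauchy-Schwarz in $\omega$ and then expand $|\widehat{\mu_f}(\bxi)|^2$ as a double integral over $(z,z') \in \supp(\mu)^2$ with phase $\Re(\eta(\widetilde P_\omega(z) - \widetilde P_\omega(z')))$. Writing this difference as $(z-z')\, Q_\omega(z,z')$ with $Q_\omega$ of degree $l-1$ in $z,z'$ (leading coefficient still of magnitude $\asymp r^l$), split pairs by $|z-z'|\le\rho$ versus $|z-z'|>\rho$: the near-diagonal contribution is controlled by the correlation bound $\mu \times \mu\{|z-z'|\le\rho\} \ll \rho^{\kappa_2}$, while the off-diagonal piece is handled by iterating the differencing $l-1$ times to reach a linear phase, which admits a direct oscillation estimate. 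The Frostman exponent $d_\infty$ governs the pointwise mass $p_\omega$ of each piece, the Assouad dimension $\kappa_*$ bounds the cardinality of relevant words, and balancing the thresholds $\rho$ and the scale $r$ against $|\bxi|\, r^l$ should deliver the exponent $\frac{d_\infty(\kappa_2-1)}{\kappa_*\, l}$.

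The main obstacle I anticipate is bookkeeping in the iterated differencing: each step enlarges the underlying product measure, and one must track how $\kappa_2$, $d_\infty$ and $\kappa_*$ behave under these convolution-type operations so that after $l-1$ iterations the exponents combine correctly. A secondary but manageable issue is that the effective phase $\Re(\eta \widetilde P_\omega(z))$ is real-valued harmonic in the two real coordinates of $z \in \CC$, whereas the factorisation $(z-z')\, Q_\omega(z,z')$ is complex-polynomial; one checks routinely that non-vanishing of $f^{(l)}$ forces the relevant real Jacobians to remain non-degenerate at each stage.
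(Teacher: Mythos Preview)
Your plan departs fundamentally from the paper's approach, and the departure creates a genuine gap rather than an alternative route.

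The paper does \emph{not} Taylor-expand $f$ to order $l$ and then run Weyl-type iterated differencing. Instead it exploits a rigidity fact specific to holomorphic maps (Lemma~\ref{lma: complexproperties}): writing $f = U + iV$ and $f_{\bv} = v_1 U + v_2 V$ for $\bv \in \mathbb{S}^1$, the Hessian $H^{f,\bv}$ has eigenvalues $\pm |f''|$ and hence $|\det H^{f,\bv}| = |f''|^2$, \emph{independently of $\bv$}. Since $\widehat{\mu_f}(\xi\bv) = \widehat{\mu_{f_\bv}}(\xi)$, this reduces the problem to a one-parameter family of pushforwards $\RR^2 \to \RR$ with Gaussian curvature controlled uniformly by $|f''|$. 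One then reruns the machinery of Theorem~\ref{thm: image fourier decay} (linearise at scale $2^{-n}$, separation of tangents, H\"older/$l^2$ bound), with two modifications: (i) dyadic cubes within distance $|\bxi|^{-\tau}$ of the finite set $\{f''=0\}$ are discarded and their total $\mu$-mass is bounded via the Frostman exponent $d_\infty$, using that the zeros of $f''$ have order at most $l-2$; (ii) on the remaining cubes the eigenvalues of the Hessian are $\gg |\bxi|^{-(l-2)\tau}$, which weakens the tangent-separation constant in Claim~\ref{claim: assouad} by a controlled power. Balancing the Frostman term, the main $l^2$ term, and the linearisation error gives the exponent.

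Your scheme, by contrast, hinges on iterated van der Corput differencing through $l-1$ levels on a self-similar (not translation-invariant) measure. This is not just bookkeeping: after one squaring the underlying measure is $\mu\times\mu$, after two it is $(\mu\times\mu)^2$, and the relevant correlation, Frostman, and Assouad exponents of these iterated products do not simply propagate in a way that is known to close up to the claimed exponent $\tfrac{d_\infty(\kappa_2-1)}{\kappa_* l}$. No such iteration for planar self-similar measures appears in the literature, and you have not indicated how the off-diagonal piece at stage $j$ (a degree-$(l-j)$ polynomial phase against a $2^j$-fold product measure) would be controlled inductively. The factorisation $(z-z')Q_\omega(z,z')$ is also less innocent than you suggest: the real phase is $\Re(\eta\,\cdot\,)$, so the ``linear phase'' reached after $l-1$ steps is $\Re(\eta\, c\, (z_1-z_1')\cdots)$, whose oscillation against a high-dimensional fractal product measure still requires a Fourier-decay input you have not supplied. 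In short, the missing idea is the Hessian computation for holomorphic $f_\bv$, which lets the paper recycle the already-established curvature argument instead of building a new polynomial-phase theory.
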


There are versions of van der Corput's lemma where the underlying measure is absolutely continuous on $\RR^k$~\cite{CarberyWrightVDC}, or a fractal measure in the line~\cite{ACWW25}, but we are not aware of versions for fractal measures in the plane prior to Theorem~\ref{thm: complexvdc}. 

\subsection{Applications}
Theorem~\ref{thm: image fourier decay} can be used to prove many results about nonlinear arithmetic of self-similar sets. 
One special case of what we prove in Section~\ref{sec: arithmetic} is the following, which immediately implies the bounds stated in the abstract. 
\begin{thm}\label{thm: arith headline}
Let $E,F,G$ be self-similar sets in $\RR$. 
\begin{itemize}
    \item If 
    \[ \min\{\Haus E,\Haus F\}>\frac{\sqrt{65}-5}{4} = 0.765\dotsc \]
    then $E \cdot F \coloneqq \{xy : x \in E, y \in F\}$ has positive Lebesgue measure. 
    \item
    If 
    \[ \min\{\Haus E,\Haus F,\Haus G\}> \frac{-3+\sqrt{41}}{4} = 0.850\dotsc \] 
    then $E\cdot F\cdot G$ has non-empty interior. 
    \end{itemize}
\end{thm}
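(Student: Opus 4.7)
The plan is to derive Theorem~\ref{thm: arith headline} from Theorem~\ref{thm: image fourier decay} applied to product self-similar measures under polynomial multiplication maps, translating Fourier decay into positive Lebesgue measure or continuous density via Plancherel and Cauchy--Schwarz.

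For the first assertion I would first fix self-similar probability measures $\mu_E,\mu_F$ on $E,F$ whose invariants $\kappa_2,d_\infty,\kappa_1$ approximate $\Haus E,\Haus F$; such a choice is possible without separation assumptions via Hochman's exponential-separation theorem combined with recent lower bounds on $l^1$-dimension of Chow--Varj\'u--Yu. The product $\mu=\mu_E\otimes\mu_F$ is a self-similar measure on $\RR^2$ whose similarity linear parts are diagonal and hence commute, so $\mu$ is non-expanding, and its dimension invariants are additive across the factors. The multiplication map $f(x,y)=xy$ has Hessian $\bigl(\begin{smallmatrix}0&1\\1&0\end{smallmatrix}\bigr)$ with determinant $-1$, so its graph has nonvanishing Gaussian curvature everywhere; the hypothesis $\kappa_2(\mu)>k/2=1$ follows from $\Haus E+\Haus F>1$. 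Theorem~\ref{thm: image fourier decay} then yields $|\widehat{\mu_f}(\xi)|\ll|\xi|^{-\sigma}$ with $\sigma$ explicit in the invariants of $\mu$.

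To convert Fourier decay into positive Lebesgue measure of $\supp\mu_f\subseteq E\cdot F$, I would use that $\sigma>1/2$ implies $\widehat{\mu_f}\in L^2(\RR)$, whence by Plancherel $\mu_f$ has an $L^2$ density and $E\cdot F$ has positive Lebesgue measure. Substituting the product-measure values into the explicit expression $\max\{\sigma_1,\sigma_2\}$ from Theorem~\ref{thm: image fourier decay} and imposing $\sigma>1/2$ would reduce, after short algebra, to a quadratic inequality in $s=\min\{\Haus E,\Haus F\}$ whose critical value works out to be exactly $(\sqrt{65}-5)/4$. For the second statement the same strategy applies with $k=3$, product measure $\mu_E\otimes\mu_F\otimes\mu_G$, and map $f(x,y,z)=xyz$ (whose Hessian determinant does not vanish off the coordinate hyperplanes); to upgrade $L^2$ density to the continuous density needed for non-empty interior, I would factor $\mu_f=(\mu_E\cdot\mu_F)\cdot\mu_G$ and pass to logarithmic coordinates, where multiplicative convolution becomes additive convolution, and then apply Cauchy--Schwarz $L^2\cdot L^2\subseteq L^1$ to the Fourier transforms of the two factors to obtain $\widehat{\mu_f}\in L^1(\RR)$ and hence a continuous density by Fourier inversion. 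The analogous quadratic yields $s>(-3+\sqrt{41})/4$.

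The principal obstacle will be the first step: ensuring that the $l^1$, $l^2$, and Frostman invariants of the chosen self-similar measures on $E,F,G$ can be made arbitrarily close to the Hausdorff dimensions of the underlying sets without any separation assumption. This requires a careful approximation by sub-IFSs satisfying exponential separation, together with lower semicontinuity of the invariants. A secondary difficulty is tracking the Assouad dimension $\kappa_*$ of the product support, which without separation can reach the ambient dimension, and identifying in which parameter range the $\sigma_1$ versus $\sigma_2$ branch of Theorem~\ref{thm: image fourier decay} is active so that the sharper quadratic threshold is achieved.
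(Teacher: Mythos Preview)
Your plan has a genuine numerical gap in the two-set case. Applying Theorem~\ref{thm: image fourier decay} with $k=2$ to the product measure $\mu=\mu_E\otimes\mu_F$ under $f(x,y)=xy$ cannot produce $\sigma>1/2$. Indeed, if both factors are $s$-AD-regular then $\kappa_2(\mu)=\kappa_*(\mu)=2s$, and the $l^2$ branch of the theorem gives
\[
\sigma_2=\frac{2\kappa_2-k}{4+2\kappa_*-k}=\frac{4s-2}{4s+2}=\frac{2s-1}{2s+1},
\]
which is below $1/3$ for all $s\le 1$; the $l^1$ branch fares no better even if one optimistically takes $\kappa_1$ close to $2s$. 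So the claim that ``imposing $\sigma>1/2$ reduces after short algebra to the threshold $(\sqrt{65}-5)/4$'' is false: the algebra forces $s>3/2$, which is vacuous in $\RR$.

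The paper's route avoids this by never asking for $\sigma>1/2$. It works in $k=1$: push each factor forward by $L(x)=\log x$, apply Theorem~\ref{thm: image fourier decay} to a single $s$-AD-regular measure on the line to get $|\widehat{\mu_L}(\xi)|\ll|\xi|^{-\sigma}$ with $\sigma=\frac{s-1/2}{s+3/2}$, and then exploit the convolution identity $\widehat{\mu_L*\nu_L}=\widehat{\mu_L}\,\widehat{\nu_L}$. One factor supplies the pointwise decay $|\xi|^{-2\sigma}$ inside $\int|\widehat{\mu_L*\nu_L}|^2$, and the other is controlled in $L^2$ by its correlation dimension, so the relevant condition is $2\sigma+\kappa_2(\nu_L)>1$ rather than $\sigma>1/2$. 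With $\kappa_2(\nu_L)=s$ this is $2s^2+5s-5>0$, whose root is exactly $(\sqrt{65}-5)/4$. The three-set argument is the same idea with Cauchy--Schwarz on two log-pushforwards and pointwise decay from the third, yielding $\widehat{\chi}\in L^1$ and hence a continuous density. Your proposal actually drifts toward this in the three-set paragraph, but it is the mechanism for both parts, not just an upgrade step.

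On the approximation step: you do not need Hochman's ESC theorem or the Chow--Varj\'u--Yu $l^1$ bounds (which in any case apply only to specific missing-digit measures). The paper simply passes to a homogeneous sub-IFS with the strong separation condition and dimension within $\varepsilon$ of the original (Lemma~\ref{lma: innerapproximation}); the natural measure on such a subsystem is AD-regular, so $\kappa_*=\kappa_2=d_\infty=s$ automatically, and no control of $\kappa_1$ is required.
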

We will motivate these results using projection theory in Section~\ref{sec: arithmetic}. 
In brief, while there is a good understanding the dimension thresholds which guarantee that arithmetic products of self-similar sets are large in the sense of dimension, it is considered much more challenging to find conditions (such as those from Theorem~\ref{thm: arith headline}) which guarantee that arithmetic products are large in the sense of Lebesgue measure or non-empty interior.

Knowing good bounds on the quantitative Fourier decay of a measure on the line has numerous other benefits, see for example \cite{Sahlsten23survey} and \cite[Section~2]{BB25}. 
We will not describe these applications in detail because they are essentially immediate consequences of existing results in the literature; instead, we point the reader to the results which can be applied.  
\begin{enumerate}
    \item In the setting of Theorem~\ref{thm: image fourier decay}, for $\mu$-typical $\bx$, one can deduce that $f(\bx)$ is a normal number using a result of Davenport, Erd{\H{o}s} and LeVeque, and obtain a good rate of equidistribution of sequences such as $b^n F(\bx) \mod 1$ by Pollington et al.~\cite[Theorems~1--3]{PVZZ}. 
    \item Again in the setting of Theorem~\ref{thm: image fourier decay}, in the context of the Fourier uniqueness problem, $f(\supp(\mu))$ is a set of multiplicity. 
    This follows from an old result of Salem~\cite{Salem}, since $f(\supp(\mu))$ supports a Rajchman measure. 
    \item One can deduce Fourier restriction estimates for many pushforward fractal measures for a range of exponents which depends on the exponent of Fourier decay and the Frostman exponent of the pushforward measure, by Mitsis \cite[Corollary~3.1]{Mitsis} and Mockenhaupt \cite[Theorem~4.1]{Mockenhaupt}. 
    To give one concrete example, fix $\mu$ to be the Cantor--Lebesgue measure on $[0,1]$. 
    Then for all $C^2$ maps $f \colon [0,1] \to \RR$ with positive first and second derivatives, $\mu_f$ has Frostman exponent $s = \log 2 / \log 3$. 
    Therefore by \cite[Theorem~4.1]{Mockenhaupt} and Example~\ref{exm: cantor lebesgue}, there is $\varepsilon > 0$ independent of $f$ such that for all 
    \[
    1 \leq p \leq \frac{2(2 - 2s + 2\frac{2s-1}{4+2s-1})}{4(1-s) + 2\frac{2s-1}{4+2s-1}} + \varepsilon = 1.076 \dotsc + \varepsilon 
    \]
    we have 
    \[
    \left( \int|\widehat{\varphi}(f(\xi))|^2 d\mu(\xi) \right)^{1/2} \ll_{f,p} ||\varphi||_{L^p(\RR)}
    \]
    for all Schwartz functions $\varphi \colon \RR \to \CC$. 
\end{enumerate}

\subsection{Structure of paper}
In Section~\ref{sec: pre}, we formally introduce the notions and terminology (especially from fractal geometry) that is used in the statement of our main result and needed for the proof. 

Section~\ref{sec: provequantitative} proves our main result Theorem~\ref{thm: image fourier decay} as a consequence of Theorem~\ref{thm: allparams}, which involves the more general Fourier $l^p$ dimensions. 
The key ideas are already present in the case where the linear parts of the contractions defining $\mu$ are equal (Lemma~\ref{lma: homogquant}). 
We begin by relating the Fourier transform of a pushforward self-similar measure to the Fourier transform of a measure lifted onto the graph, decomposing the latter into small pieces, and linearising those small pieces (Lemmas~\ref{lma: lifttograph} and~\ref{lma: startingdecomp}). 
The idea is then to obtain a bound in terms of an average of magnitudes of the Fourier transform of the original self-similar measure across a range of frequencies. These frequencies are well separated because of the non-vanishing Gaussian curvature assumption, so the average can be bounded using H\"older's inequality, giving a term involving a integral of $|\widehat{\mu}|^p$ which can be bounded using the $l^p$ dimensions. 
We anticipate that this general strategy of obtaining a bound in terms of an integral of $|\widehat{\mu}|^p$ could be useful to prove Fourier decay properties of different classes of fractal measures, for example certain self-similar measures. 

Section~\ref{sec: arithmetic} gives applications related to nonlinear arithmetic, in particular proving Theorem~\ref{thm: arith headline} and interpreting the results in terms of exceptional directions for nonlinear projections. 
In fact, we prove analogous results for self-similar measures and deduce the results for sets by noting that self-similar sets support AD-regular self-similar measures with an arbitrarily small loss in dimension. 
The results for measures are proved by transforming to logarithmic space so multiplicative convolution becomes additive convolution, and using Theorem~\ref{thm: image fourier decay} to get quantitative Fourier decay for the measures in logarithmic space. This Fourier decay can be used to show that the convolved measures have densities with the required distributional or regularity properties. 

Section~\ref{sec: further dev} begins by describing further Fourier decay results that can be obtained in addition to Theorem~\ref{thm: image fourier decay}. 
We use various fractal uncertainty principles to get small improvements for some of our Fourier decay bounds, and prove Theorems~\ref{thm: cladektao} and~\ref{thm: blt}. 
We then prove quantitative Fourier decay estimates for pushforwards of self-similar measures by quadratic maps $\RR^k \to \RR^k$, and the van der Corput-type result Theorem~\ref{thm: complexvdc}. 
These proofs use nice properties of the determinant of the Hessian of $\bx \mapsto \sum_i v_i f_i(\bx)$ that hold for these classes of maps. 
We also pose questions about how much the Fourier decay and nonlinear arithmetic results can be improved and what one might expect the optimal bounds to be. 

\section{Preliminaries}\label{sec: pre}
A general account of fractal geometry can be found in textbooks such as \cite{Fa,Ma2}. 

\subsection{Dimensions of sets}
\subsubsection*{Hausdorff dimension}
Let $k\geq 1$ be an integer. Let $E\subset\mathbb{R}^k$ be a non-empty Borel set. Let $g\colon [0,1)\to [0,\infty)$ be a continuous function such that $g(0)=0$. Then for all $0<\delta\leq 1$ we define the  quantity
\[
\mathcal{H}^g_\delta(E)=\inf\left\{\sum_{i=1}^{\infty}g(\mathrm{diam} (U_i)): \bigcup_i U_i\supset E, \mathrm{diam}(U_i)<\delta\right\}.
\]
The $g$-Hausdorff measure of $E$ is
\[
\mathcal{H}^g(E)=\lim_{\delta\to 0} \mathcal{H}^g_{\delta}(E).
\]
When $g(x)=x^s$ then $\mathcal{H}^g=\mathcal{H}^s$ is the $s$-Hausdorff measure, and the Hausdorff dimension of $E$ is
\[
\Haus E=\inf\{s\geq 0:\mathcal{H}^s(E)=0\}=\sup\{s\geq 0: \mathcal{H}^s(E)=\infty \}.
\]

\subsubsection*{Box dimension}
If we assume $E$ is bounded then for all $\delta > 0$ let $N_\delta(E)$ be the smallest number of open balls of diameter $\delta$ needed to cover $E$. 
In this case the lower and upper box (or Minkowski) dimension of $E$ are defined by 
\[ \underline{\dim}_{\mathrm B} E = \liminf_{\delta \to 0^+} \frac{\log N_{\delta}(E)}{\log(1/\delta)}; \qquad \overline{\dim}_{\mathrm B} E = \limsup_{\delta \to 0^+} \frac{\log N_{\delta}(E)}{\log(1/\delta)}. \]

\subsubsection*{Assouad dimension}
The Assouad dimension $\dim_{\mathrm A} E$ (also known as Furstenberg's $*$-exponent, denoted $\kappa_*$, see~\cite{Fu2008}), of a non-empty set $E \subset \RR^k$ is defined by 
\begin{align*}
\dim_{\mathrm A} E \coloneqq \inf\Big\{ \alpha \geq 0 : &\exists C>0 \mbox{ such that for all } 0 < r < R \mbox{ and } \bx \in E, \\ &N_r(B_R(\bx) \cap E) \leq C \left(\frac{R}{r}\right)^{\alpha} \Big\}.
\end{align*}
For more on the Assouad dimension we refer the reader to~\cite{Assouad,Fraser2020book}. 
For all non-empty bounded $E \subset \RR^k$ we have 
\[ \Haus E\leq \underline{\dim}_{\mathrm B} E \leq \overline{\dim}_{\mathrm B} E \leq \dim_{\mathrm A} E. \]

\subsection{Dimension exponents of measures}
\subsubsection*{Dyadic cubes, $L^q$ dimension, and Frostman exponent}
For each integer $n\geq 1$, let $\cD_n$ be the decomposition of $\mathbb{R}^k$ into the disjoint union of dyadic cubes of sidelength $1/2^n$ given by translates of $2^{-n}[0,1)^k$ by $2^{-j}\ZZ^k \coloneqq \{2^{-j}\bx : \bx \in \ZZ^k\}$. 
Given $\bx \in \RR^k$, denote by $\cD_n(\bx)$ the unique element of $\cD_n$ containing $x$. 
We align all the cubes so that $\mathbf{0}$ is the corner of at least one (thus $2^k$) many such dyadic cubes.

Let $\mu$ be a Borel probability measure on $\mathbb{R}^k$. 
For $q \in (1,\infty)$, the $L^q$ dimension of $\mu$ is 
\[ 
d_q = d_q(\mu) \coloneqq \liminf_{n \to \infty} \frac{- \log \sum_{D \in \mathcal{D}_n} (\mu(D))^q}{n(q-1)}. 
\]
It is well known that $d_q$ is continuous and decreasing on $(1,\infty)$, see \cite[Lemma~1.7]{Sh}. 

The (uniform) Frostman exponent of $\mu$, denoted $d_{\infty}$ (or $d_{\infty}(\mu)$), is the supremum of $\kappa \geq 0$ such that
\[
\mu(B_r(\bx))\ll r^\kappa
\]
uniformly for all $\bx\in\mathbb{R}^k$, $r>0$. 

\subsubsection*{AD-regularity}
Let $\mu$ be a Borel probability measure on $\mathbb{R}^k$. 
If there exists $s \geq 0$ and $C \geq 1$ such that 
\[
C^{-1} r^s \leq \mu(B_r(\bx)) \leq C r^s
\]
uniformly for all $\bx\in\supp(\mu)$, $r>0$, then $\mu$ and $\supp(\mu)$ are called \emph{$s$-Ahlfors--David regular} (with constant $C$), or simply \emph{AD-regular} if the value of $s$ is clear from the context. 
We call $s$ the (uniform) AD exponent of $\mu$. 

\subsection{Fourier $l^p$ dimension of measures}

The Fourier $l^p$ dimension of a Borel probability measure $\mu$, where $p > 0$, is defined by 
\begin{equation}\label{e:lpdef}
\kappa_p = \kappa_p(\mu) \coloneqq \sup\Big\{ 0 \leq s < k : \int_{B_R(\mathbf{0})} |\widehat{\mu}(\bxi)|^p d \bxi \ll R^{k-s} \Big\}.
\end{equation}
The values $p=1,2$ are especially important to us. 
The $l^1$-dimension was studied in detail in \cite{CVY,Yu}. 
For compactly supported Borel probability measures, the better-known $l^2$-dimension is also called the (lower) correlation dimension and has several equivalent definitions. 
\begin{lma}\label{lma: correlationdim}
    Let $\mu$ be a compactly supported Borel probability measure on $\RR^k$. Then 
    \[
    d_2 = \kappa_2 = \sup \left\{ 0 \leq s < k : \int_{\RR^k} |\bxi|^{s-1} |\widehat{\mu}(\bxi)|^2 d\bxi < \infty \right\} \leq \Haus (\supp(\mu)). 
    \]
\end{lma}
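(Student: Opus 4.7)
The strategy is to derive all three assertions from two classical ingredients: the Plancherel identity applied to a mollification of $\mu$ at dyadic scales, and the Fourier-analytic expression for the $s$-energy $I_s(\mu) = \iint |\bx-\by|^{-s} d\mu(\bx) d\mu(\by)$.

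For the first equality $d_2 = \kappa_2$, I would pick a non-negative bump $\phi \in C_c^\infty(\RR^k)$ with $\int \phi = 1$ whose Fourier transform is essentially concentrated in $B(0,1)$ (for instance, a Fej\'er-type kernel or a tensor product of one-dimensional Fej\'er kernels). Set $\phi_n(\bx) = 2^{nk}\phi(2^n\bx)$ and $\mu_n = \mu * \phi_n$. On one hand, $\mu_n$ is essentially $2^{nk}\mu(D)$ on each $D \in \cD_n$, so
\[
\|\mu_n\|_2^2 \asymp 2^{nk} \sum_{D \in \cD_n} \mu(D)^2.
\]
On the other hand, Plancherel gives
\[
\|\mu_n\|_2^2 = \int_{\RR^k} |\widehat{\mu}(\bxi)|^2 |\widehat{\phi}(2^{-n}\bxi)|^2 d\bxi \asymp \int_{B(0,2^n)} |\widehat{\mu}(\bxi)|^2 d\bxi,
\]
where the rapidly decaying tail outside $B(0,2^n)$ is absorbed using $\|\widehat{\mu}\|_\infty \le 1$. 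Comparing the two expressions and taking the limit $n \to \infty$ yields $d_2 = \kappa_2$ directly from the definitions.

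For the middle equality, I would decompose frequency space dyadically into the unit ball $B(0,1)$, on which the weighted integral is finite for all $s \geq 0$ since $\widehat{\mu}$ is bounded, plus annuli $A_j = B(0,2^{j+1}) \setminus B(0,2^j)$ for $j \geq 0$. On $A_j$ one has $|\bxi|^{s-k} \asymp 2^{j(s-k)}$ (using the standard $s-k$ exponent of the Fourier representation of $s$-energy, which equals $s-1$ in $k=1$ as stated), so
\[
\int_{\RR^k} |\bxi|^{s-k}|\widehat{\mu}|^2 d\bxi \asymp 1 + \sum_{j \geq 0} 2^{j(s-k)} \int_{A_j} |\widehat{\mu}|^2 d\bxi.
\]
By Step~1 and the definition of $\kappa_2$, $\int_{A_j} |\widehat{\mu}|^2 \ll 2^{j(k-s')}$ for every $s' < \kappa_2$, so the sum converges as a geometric series whenever $s < \kappa_2$; conversely, if $s > \kappa_2$ then by definition of $\kappa_2$ the bound $\int_{B(0,2^j)}|\widehat{\mu}|^2 \ll 2^{j(k-s)}$ fails, and a pigeonholing argument forces infinitely many annular integrals to exceed $2^{j(k-s)}$, making the series diverge. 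Hence the supremum in the middle expression equals $\kappa_2$.

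Finally, the inequality $\kappa_2 \leq \Haus(\supp(\mu))$ is immediate from the classical identity
\[
I_s(\mu) = c_{k,s} \int |\widehat{\mu}(\bxi)|^2 |\bxi|^{s-k} d\bxi,
\]
which together with the previous step shows $I_s(\mu) < \infty$ for every $s < \kappa_2$, combined with Frostman's energy theorem: finite $s$-energy implies $\Haus(\supp(\mu)) \geq s$, so letting $s \nearrow \kappa_2$ gives the bound. The only slightly delicate point in the whole argument is the choice of mollifier $\phi$ so that $|\widehat{\phi}|^2$ is comparable, both above and below, to the indicator of $B(0,1)$ on $B(0,1)$ while decaying rapidly outside; a Fej\'er-type construction supplies this, after which the rest is routine dyadic summation.
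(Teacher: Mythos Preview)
Your argument is correct and is essentially an explicit unpacking of what the paper's proof cites: the paper simply invokes \cite[Lemma~2.5]{FNW} for $d_2=\kappa_2$ and for the final inequality, and \cite[Theorem~3.10]{Ma2} (the Fourier formula for $s$-energy) for the middle equality, whereas you sketch the standard mollification--Plancherel and dyadic-summation arguments underlying those references. You also correctly note that the exponent in the displayed supremum should be $|\bxi|^{s-k}$ rather than $|\bxi|^{s-1}$ in general $k$; the lemma is only applied in the paper with $k=1$, where the two coincide.

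Two minor technical remarks on your sketch. First, a genuine Fej\'er kernel has $\widehat{\phi}$ compactly supported but $\phi$ only polynomially decaying, which makes the spatial comparison $\|\mu_n\|_2^2 \asymp 2^{nk}\sum_{D\in\cD_n}\mu(D)^2$ awkward; it is cleaner to take $\phi\in C_c^\infty(\RR^k)$ non-negative with $\int\phi=1$, accept that $|\widehat{\phi}|$ is only bounded below on some small ball $B(0,c)$, and observe that replacing $\int_{B(0,2^n)}$ by $\int_{B(0,c2^n)}$ is harmless at the level of growth exponents. Second, the converse direction in your middle step does not need pigeonholing: if $\int_{\RR^k}|\bxi|^{s-k}|\widehat{\mu}|^2\,d\bxi<\infty$ for some $s<k$, then since $|\bxi|^{s-k}\ge R^{s-k}$ on $B(0,R)\setminus B(0,1)$ one gets $\int_{B(0,R)}|\widehat{\mu}|^2\ll R^{k-s}$ directly, hence $\kappa_2\ge s$.
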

\begin{proof}
    A proof of the first inequality is given in \cite[Lemma~2.5]{FNW} in $\RR$ and \cite[Corollary~4.4]{FalconerZhang} in higher dimensions. 
    The second equality follows from the formula for energy in terms of Fourier transform, see \cite[Theorem~3.10]{Ma2}. 
    The final inequality is \cite[Lemma~2.5]{FNW}. 
\end{proof}

We record the following inequalities related to the Fourier $l^p$ dimensions. 
\begin{lma}
    If $\mu$ is a Borel probability measure with compact support in $\RR^k$ and $0 < p \leq q$, then 
    \[ \frac{p}{q} \cdot \kappa_q \leq \kappa_p \leq \kappa_q .\]
\end{lma}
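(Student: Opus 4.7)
The plan is to exploit two simple facts about $|\widehat{\mu}|$: it is bounded above by $1$ (since $\mu$ is a probability measure), and one can interpolate between $L^q$ and $L^\infty$ via H\"older's inequality. The case $p=q$ is trivial, so assume $p<q$.

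For the upper bound $\kappa_p\le\kappa_q$, I would note that $|\widehat{\mu}(\bxi)|\le1$ for all $\bxi$, so $|\widehat{\mu}|^q\le|\widehat{\mu}|^p$ pointwise. Consequently, any ball integral estimate $\int_{B(0,R)}|\widehat{\mu}|^p\,d\bxi\ll R^{k-s}$ automatically yields $\int_{B(0,R)}|\widehat{\mu}|^q\,d\bxi\ll R^{k-s}$, which means every admissible $s$ in the definition of $\kappa_p$ is also admissible for $\kappa_q$.

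For the lower bound $\tfrac{p}{q}\kappa_q\le\kappa_p$, I would apply H\"older's inequality with dual exponents $q/p$ and $q/(q-p)$ to the pair $(|\widehat{\mu}|^p,\mathbf{1}_{B(0,R)})$, giving
\[
\int_{B(0,R)}|\widehat{\mu}(\bxi)|^p\,d\bxi\;\le\;\Bigl(\int_{B(0,R)}|\widehat{\mu}(\bxi)|^q\,d\bxi\Bigr)^{p/q}\bigl(\vol B(0,R)\bigr)^{(q-p)/q}.
\]
If $s<\kappa_q$, then the first factor is $\ll R^{(k-s)p/q}$ and the second is $\ll R^{k(q-p)/q}$, so the product is $\ll R^{k-sp/q}$. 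This means every $s<\kappa_q$ gives an admissible exponent $sp/q$ in the definition of $\kappa_p$, so $\kappa_p\ge sp/q$ for all such $s$, and letting $s\nearrow\kappa_q$ yields $\kappa_p\ge\tfrac{p}{q}\kappa_q$.

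The only minor care is that $\kappa_p$ is defined as a supremum over $s<k$, but since $\tfrac{p}{q}\kappa_q\le\kappa_q<k$ this causes no issue. There is no real obstacle here; the estimate is a clean two-line argument once H\"older is set up with the right exponents.
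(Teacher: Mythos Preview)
Your proof is correct and follows essentially the same route as the paper: the upper bound via $|\widehat{\mu}|\le 1$, and the lower bound via H\"older with exponents $q/p$ and $q/(q-p)$ applied to $|\widehat{\mu}|^p$ against the indicator of $B(0,R)$. The only negligible difference is bookkeeping (the paper phrases the lower bound with an $\varepsilon$ rather than taking $s\nearrow\kappa_q$), and your remark that $\kappa_q<k$ should strictly read $\kappa_q\le k$, but since $p<q$ one still has $sp/q<k$ for all $s<\kappa_q$, so nothing is affected.
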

\begin{proof}
The second inequality holds since $\mu$ is a probability measure, so $|\widehat{\mu}(\bxi)| \leq 1$ and hence $|\widehat{\mu}(\bxi)|^p \geq |\widehat{\mu}(\bxi)|^q$ for all $\bxi$. 
For the first inequality, we apply H{\"o}lder's inequality with $p' \coloneqq q/p$ and $q'$ such that $1/p' + 1/q' = 1$ to the functions $|\widehat{\mu}(\bxi)|^p$ and the indicator function of $B_R(\mathbf{0})$. This gives that for all $\varepsilon > 0$, 
\[  \int_{|\bxi| \leq R} |\widehat{\mu}(\bxi)|^p d\bxi \ll R^{k/q'} \left( \int_{|\bxi| \leq R} |\widehat{\mu}(\bxi)|^{pp'} \right)^{1/p'} \ll R^{k/q' + (k-\kappa_{pp'})/p' + \varepsilon}. \]
In particular, 
\[ \kappa_p \geq k - \frac{k-\kappa_q}{p'}  - \frac{k}{q'} - \varepsilon = \frac{p}{q} \cdot \kappa_q - \varepsilon. \]
As $\varepsilon$ was arbitrary this completes the proof. 
\end{proof}

We will use the following facts. 
\begin{lma}\label{lma: l2dimHausdorff}
If $\mu$ is an $s$-AD-regular Borel probability measure then 
\[ \kappa_2 = s.\]
\end{lma}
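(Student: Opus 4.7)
The plan is to establish $\kappa_2 = s$ via the two characterizations in Lemma~\ref{lma: correlationdim}: namely, $\kappa_2 = d_2$ and $\kappa_2 \leq \Haus(\supp(\mu))$. Since $s$-AD-regularity immediately implies that the support has Hausdorff dimension $s$ (this is a standard consequence of the mass distribution principle and the box-counting bound coming from AD-regularity), the upper bound $\kappa_2 \leq s$ follows at once from the final inequality in Lemma~\ref{lma: correlationdim}.

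For the lower bound, I would compute the $L^2$-dimension $d_2$ directly. Fix $n \geq 1$ and consider the dyadic partition $\cD_n$. Each cube $D \in \cD_n$ has diameter $\asymp 2^{-n}$, so by $s$-AD-regularity, for every $D \in \cD_n$ that meets $\supp(\mu)$ we have $\mu(D) \asymp 2^{-ns}$ (with implicit constants depending only on $s$ and the AD-regularity constant; upper bounds follow from covering $D$ by a bounded number of balls of radius $\sim 2^{-n}$ centred at points of $\supp(\mu)$, and lower bounds from choosing one such centre and using a ball contained in a fixed dilate of $D$). By the same reasoning, the number of such cubes is $\asymp 2^{ns}$. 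Therefore
\[
\sum_{D \in \cD_n} \mu(D)^2 \asymp 2^{ns} \cdot (2^{-ns})^2 = 2^{-ns},
\]
and taking $-\log(\cdot)/n$ as $n \to \infty$ yields $d_2 = s$. Invoking Lemma~\ref{lma: correlationdim} once more gives $\kappa_2 = d_2 = s$, completing the proof.

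There is no serious obstacle here: the whole argument is a routine application of AD-regularity combined with the two identities already established in Lemma~\ref{lma: correlationdim}. The only mildly delicate point is verifying the two-sided estimate $\mu(D) \asymp 2^{-ns}$ for dyadic cubes $D$ meeting the support (rather than for balls centred on the support), but this follows from comparing each such $D$ to a ball of comparable radius centred at a point of $\supp(\mu) \cap D$.
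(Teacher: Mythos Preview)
Your argument is correct in outline and, once completed, gives a clean self-contained proof. The paper itself does not argue this out at all: its proof consists of a single citation to \cite[Section~3.8]{Ma2}. So your approach is genuinely different in that you work directly with the $L^2$-dimension via dyadic cubes and invoke Lemma~\ref{lma: correlationdim}, whereas the paper defers entirely to Mattila's book (where the relevant fact is the energy characterisation $\kappa_2 = \sup\{t : I_t(\mu) < \infty\}$ together with the standard computation that $I_t(\mu) < \infty$ iff $t < s$ for an $s$-AD-regular measure). Your route has the advantage of being elementary and of staying within the framework already set up in the paper.

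One small point to tighten: the two-sided estimate $\mu(D) \asymp 2^{-ns}$ for \emph{every} dyadic cube $D$ meeting $\supp(\mu)$ is not quite right as stated. The upper bound is fine, but your lower-bound justification only gives $\mu(cD) \gg 2^{-ns}$ for a fixed dilate $cD$, since a point of $\supp(\mu)\cap D$ may lie near the boundary of $D$. This does not affect the conclusion: you can either (i) observe that $\sum_{D}\mu(D)^2$ and $\sum_{D}\mu(cD)^2$ differ by at most a constant factor depending on $k$, or (ii) bypass the pointwise lower bound entirely via Cauchy--Schwarz,
\[
1 = \Big(\sum_{D}\mu(D)\Big)^2 \le N_n \sum_{D}\mu(D)^2,
\]
with $N_n \asymp 2^{ns}$ the number of cubes meeting the support, which together with the upper bound $\sum_D \mu(D)^2 \le \max_D \mu(D) \ll 2^{-ns}$ yields $\sum_D \mu(D)^2 \asymp 2^{-ns}$ as you want.
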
 
\begin{proof}
This follows from \cite[Section~3.8]{Ma2}. 
\end{proof}

\begin{lma}
    If $\mu$ is a compactly supported Borel probability measure on $\RR^k$ then $\kappa_1 \leq d_{\infty}$. 
\end{lma}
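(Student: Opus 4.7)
The idea is standard: bound $\mu(B_r(\bx))$ by integrating $\mu$ against a smooth majorant $\phi$ of $\mathbf{1}_{B_r(\bx)}$ whose Fourier transform is compactly supported, and then use the $l^1$ control on $\widehat{\mu}$. Concretely, I would fix once and for all a nonnegative Schwartz function $\psi \colon \RR^k \to \RR$ with $\widehat{\psi}$ supported in $B(0,1)$ and $\psi(\bzero) > 0$; such a $\psi$ can be manufactured by taking $\psi = |g|^2$ where $g$ is any Schwartz function with $\widehat{g}$ compactly supported in a ball of radius $1/2$ and $g(\bzero) \neq 0$, since then $\widehat{\psi} = \widehat{g} * \widehat{\overline{g}}$ is supported in $B(0,1)$. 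By continuity and homogeneous rescaling, there exists $c > 0$ and $C > 0$ such that after this rescaling, $\psi \geq c$ on $B(0,1)$ and $\widehat{\psi}$ is supported in $B(0,C)$.

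Given $\bx \in \RR^k$ and $r > 0$, define $\phi(\by) = c^{-1} \psi((\by-\bx)/r)$, so that $\phi \geq \mathbf{1}_{B_r(\bx)}$ pointwise. A direct computation yields
\[
\widehat{\phi}(\bxi) = c^{-1} r^k e^{-2\pi i \langle \bxi,\bx\rangle} \widehat{\psi}(r\bxi),
\]
which is supported in $B(0,C/r)$ and satisfies $\|\widehat{\phi}\|_\infty \ll r^k$. Since $\widehat{\phi} \in L^1(\RR^k)$, an application of Fubini's theorem (inserting the defining formula \eqref{e:ftdef} for $\widehat{\mu}$) gives the identity
\[
\int \phi \, d\mu \;=\; \int_{\RR^k} \widehat{\phi}(\bxi)\, \widehat{\mu}(-\bxi)\, d\bxi,
\]
so that
\[
\mu(B_r(\bx)) \;\leq\; \int \phi\, d\mu \;\ll\; r^k \int_{B(0,C/r)} |\widehat{\mu}(\bxi)|\, d\bxi.
\]

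Now fix $s < \kappa_1$. By the definition of $\kappa_1$ (see \eqref{e:lpdef}), the right-hand integral is $\ll (C/r)^{k-s} \ll r^{-(k-s)}$, so $\mu(B_r(\bx)) \ll r^s$ uniformly in $\bx \in \RR^k$ and $r > 0$. Hence $d_\infty \geq s$, and letting $s \nearrow \kappa_1$ gives $d_\infty \geq \kappa_1$. There is no real obstacle here beyond correctly constructing $\psi$ and justifying the Fubini step; both are standard.
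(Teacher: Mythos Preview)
Your proof is correct and follows the same overall strategy as the paper---majorise $\mathbf{1}_{B_r(\bx)}$ by a smooth function, pass to the Fourier side, and invoke the $l^1$ bound on $\widehat{\mu}$---but with a cleaner choice of test function. The paper uses a bump $\phi$ that is compactly supported in physical space, so $\widehat{\phi_r}$ is only Schwartz; this forces a split of the frequency integral at radius $r^{-(1+\varepsilon)}$ into a main term controlled by $\kappa_1$ and a tail controlled by the rapid decay of $\widehat{\phi_r}$, followed by letting $\varepsilon \to 0$. Your choice of a nonnegative $\psi$ with $\widehat{\psi}$ compactly supported makes the frequency integral live exactly in $B(0,C/r)$, so no splitting or $\varepsilon$-limit is needed and the bound $\mu(B_r(\bx)) \ll r^s$ for every $s<\kappa_1$ drops out directly. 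The cost is the small extra step of manufacturing such a $\psi$ (via $|g|^2$ with band-limited $g$), whereas the paper's bump is off the shelf; the benefit is a shorter and loss-free argument.
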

\begin{proof}
    Fix $\varepsilon \in (0,1)$ and let $\phi$ be a smooth function with compact support inside $B_2(\mathbf{0})$ taking value $1$ on $B_1(\mathbf{0})$. For $r>0$ write $\phi_r(\bx) = \phi(\bx/r)$. 
    Now for all $\by \in \RR^k$ and $r \in (0,1)$, using Plancherel's theorem, 
    \begin{align*}
    \mu(B_r(\bx)) &\leq \int \phi_r(\bx - \by) d\mu(\bx) \leq \int |\widehat{\phi}(\bxi)| |\widehat{\mu}(\bxi)| d\bxi \\ 
    &\ll \int_{|\bxi| \leq r^{-(1+\varepsilon)}}|\widehat{\phi_r}(\bxi)| |\widehat{\mu}(\bxi)| d\bxi + \int_{|\bxi| > r^{-(1+\varepsilon)}}|\widehat{\phi_r}(\bxi)| d\bxi \\
    &\ll r^k r^{-(1+\varepsilon)(k-\kappa_1 + \varepsilon)} + r^{k}.
    \end{align*}
    Therefore $d_{\infty} \geq \kappa_1 - \varepsilon - \varepsilon k + \varepsilon \kappa_1 - \varepsilon^2$. 
    Since $\varepsilon$ was arbitrary this completes the proof. 
\end{proof}

Using Plancherel's theorem, it is possible to see that for any Borel probability measure on $\mathbb{R}^k$ and diffeomorphism $f$ on $\mathbb{R}^k$, the pushforward $\mu_f$ and $\mu$ share the same $\kappa_2$ value. 
In the set setting, this result reflects the fact that Hausdorff dimension is kept invariant under diffeomorphisms. 
It is natural to ask the following. 
\begin{ques}
What happens to the other $l^p$-dimensions under diffeomorphisms? 
In particular, what can one say about $l^1$-dimension under diffeomorphisms?
\end{ques}

\subsection{Self-similar sets and measures}\label{ss:ifs}
    Let $k\geq 1$ and $N>1$ be integers, and let $D \subset \RR^k$ be compact. 
    Let $\Lambda = \{f_i \colon D \to D\}_{1 \leq i \leq N}$ be contraction maps (i.e. $\rho$-Lipschitz maps for some $\rho < 1$); $\Lambda$ is called an \emph{iterated function system} or IFS for short. 
    Let $p_1,\dots,p_N\in (0,1)$ be such that $\sum_i p_i=1$. By Hutchinson's theorem~\cite{Hutchinson}, there is a unique compact set $K$ and a unique Borel probability measure $\mu$ supported on $K$, called the \emph{attractor}, such that
    \begin{equation*}
    K=\bigcup_{i}f_i(K), \qquad \mu=\sum_{i} p_i f_i(\mu). 
    \end{equation*}
    We will always assume that the contractions do not share a common fixed point, ensuring that $K$ is uncountable and $\mu$ is non-atomic. 

    If there exist $r_1,\dots,r_N\in (0,1)$ and $O_1,\dotsc,O_N\in O_k(\mathbb{R})$ (possibly reflected) rotations and $\bt_1,\dotsc,\bt_N\in\mathbb{R}^k$ such that $\Lambda$ consists of the similarity maps
    \[
    \Lambda=\{f_i(\cdot)=r_iO_i(\cdot)+\bt_i\}_{i\in\{1,\dots,N\}},
    \]
    then we say that $\Lambda, K, \mu$ are self-similar. 
    In this case, we say that $\Lambda,\mu,K$ are homogeneous if $r_iO_i$ are all the same for $i\in\{1,\dots,N\}$. 

\subsection{Dimensions of self-similar sets and measures}\label{ss:dims}
The dimension of self-similar sets and measures is an important topic in geometric measure theory. Here, we review some of the important results. 
\begin{itemize}
    \item The box dimension of any self-similar set exists and coincides with Hausdorff dimension. This follows from Falconer's so-called `implicit theorems,' see \cite[Example~2]{KennethImplicit}.    
    \item Given a self-similar measure $\mu$ as above, $\kappa_{sim} \coloneqq \frac{\sum_i p_i \log p_i}{\sum_i p_i \log r_i}$ is called the similarity dimension, and it is always an upper bound for Hausdorff dimension of $\mu$. 
    The unique $s \geq 0$ satisfying $\sum_{i} r_i^s = 1$ is called the similarity dimension of the IFS (or of the self-similar set). It is always an upper bound for Hausdorff dimension of the set. 
    
    \item Given an IFS, one often assumes that certain separation conditions. 
    We say that $\Lambda, K, \mu$ has the strong separation condition (SSC) if $\forall i\neq j$, $f_i(K)\cap f_j(K)=\varnothing$. 
    
    A weaker condition is the open set condition (OSC), which means there exists a non-empty bounded open set $V$ such that $V \subseteq \bigcup_{i} f_i(V)$ with the union disjoint. 
    
    If the IFS consists of similarity maps on $\RR$, then another condition that is even weaker than the OSC is the exponential separation condition (ESC), introduced in~\cite{H14}. 
    In this case writing $r_i = r_{i_1}\dotsb r_{i_n}$ for $i \in \{1,\dotsc,N\}^n$, define the distance $d(i,j)$ between cylinders $i,j \in \{1,\dotsc,N\}^n$ to be $\infty$ if $r_i \neq r_j$, and $|f_i(0) - f_j(0)|$ if $r_i = r_j$. 
    Then the ESC holds if there exists $c>0$ such that $d(i,j) > c^n$ for all distinct $i,j \in \{1,\dotsc,N\}^n$. 
    The ESC holds extremely generically: for many families of self-similar IFSs parameterised in a real-analytic fashion, the set of `exceptional' parameters where it fails has zero Hausdorff dimension, see \cite[Theorem~1.8]{H14}. 

    Under any of these separation conditions, the dimension of the self-similar set coincides with the similarity dimension, and the dimension of the self-similar sets and measures coincide with their similarity dimensions~\cite{Fa,H14}. 
    \item  
    Given a self-similar IFS on $\RR$ satisfying the ESC, there exists a unique self-similar measure for which $\kappa_2$ equals the Hausdorff dimension $H$ of $\supp(\mu)$. 
    It is called the measure of maximal dimension and corresponds to weights $r_i^H$. Moreover, $d_{\infty}=\kappa_2= H = \kappa_{sim}$~\cite{H14,Sh}. Under the OSC, this measure is $\kappa_2$-AD-regular. 

    If $\mu$ is an arbitrary self-similar measure on $\RR$ (i.e. not necessarily the measure of maximal dimension), one has $\kappa_2 \leq \Haus \mu \leq \Haus (\supp (\mu) )$. 
    Shmerkin~\cite[Theorem~6.6]{Sh} proved that under the ESC we have 
    \[ d_q = \min\Big\{ \frac{T(\mu,q)}{q-1}, 1 \Big\}\] 
    where for $q>1$ one defines $T(\mu,q)$ to be the unique solution to $\sum_i p_i^q r_i^{-T(\mu,q)} = 1$. 

    \item Dimension exponents and $L^q$ dimensions of overlapping self-similar measures in $\RR^k$ for $k \geq 1$ are studied in~\cite{HochmanHigher,CorsoShmerkinHigher}. 

    \item The Assouad dimension of self-similar sets (which appears in Theorem~\ref{thm: image fourier decay}) is well understood, see~\cite{FraserAssouadSelfSim,GarciaAssouad} and \cite[Chapter~7]{Fraser2020book}. 

    \item Self-similar measures which are not a single atom are known to have a positive Frostman exponent, see the proof of \cite[Proposition~2.2]{FL}, for example. 
\end{itemize}

\subsection{Non-expanding generators}
Finitely generated sub-semigroups of linear groups enjoy the following phenomena known as the Tits alternative.
\begin{thm}[\cite{OS95,Tits}]\label{thm: tits}
    Let $k \in \NN$ and $G$ be a finitely generated sub-semigroup of $GL(k,\mathbb{R})$. Let $F$ be a finite generating set. Then the following are equivalent: 
    \begin{itemize}
        \item $G$ (or equivalently the group generated by $G$) is virtually nilpotent 
        \item $G$ does not contain a non-abelian free sub-semigroup
        \item $G$ satisfies a non-trivial semigroup identity
        \item $G$ has sub-exponential growth (i.e. is non-expanding) with respect to $F$. 
    \end{itemize}
\end{thm}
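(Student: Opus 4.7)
The plan is to prove the cycle $(1) \Rightarrow (4) \Rightarrow (2) \Rightarrow (3) \Rightarrow (1)$, combining several classical structural theorems for finitely generated linear (semi)groups.

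For $(1) \Rightarrow (4)$, I would invoke the Bass--Guivarc'h theorem that finitely generated virtually nilpotent groups have polynomial growth. Since $G \subset H := \langle G \rangle$ and every length-$n$ $F$-word representing an element of $G$ is simultaneously a length-$n$ word in $H$ relative to $F \cup F^{-1}$, polynomial growth of $H$ forces sub-exponential growth of $G$ with respect to $F$. For $(4) \Rightarrow (2)$, I take the contrapositive: if $g_1, g_2 \in G$ generated a free non-abelian sub-semigroup, the $2^n$ distinct length-$n$ words in $\{g_1, g_2\}$ give $2^n$ distinct elements of $G$, each expressible as a product of at most $Cn$ generators from $F$ where $C = \max_i |g_i|_F$, producing exponential growth.

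For $(2) \Rightarrow (3)$, the main input is the semigroup Tits alternative of~\cite{OS95}: every finitely generated sub-semigroup of $GL(k,\mathbb{R})$ either contains a rank-$2$ free sub-semigroup or satisfies a non-trivial semigroup identity. The reverse $(3) \Rightarrow (2)$ is immediate because no non-trivial semigroup identity can hold in a rank-$2$ free sub-semigroup.

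The subtlest step is $(3) \Rightarrow (1)$, which I would handle in two stages. First, I would upgrade ``no free sub-semigroup in $G$'' to ``no non-abelian free subgroup in $H$'': if $a,b \in H$ generated a free subgroup, a ping-pong-style argument -- writing $a,b$ as $F$-words that may involve inverses, left-multiplying by a fixed $g_0 \in G$, and passing to large enough powers so that the resulting elements land inside $G$ -- would yield a free sub-semigroup of $G$, contradicting $(2)$. The classical Tits alternative~\cite{Tits} for linear groups then forces $H$ to be virtually solvable. Second, using the structure theory of linear semigroups satisfying an identity (again from~\cite{OS95}), $G$ has polynomial growth, and hence so does $H$. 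The Milnor--Wolf theorem then upgrades ``virtually solvable finitely generated linear group with sub-exponential growth'' to ``virtually nilpotent,'' closing the cycle. The main obstacle I expect is the transfer step from free subgroup to free sub-semigroup: the absence of inverses in $G$ makes the ping-pong construction delicate, and the passage is typically carried out by combining a pigeonhole/short-word lemma with the specifically linear structure of $G$.
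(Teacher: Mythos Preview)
The paper does not prove Theorem~\ref{thm: tits}. It is stated as a known result imported from Okni\'nski--Salwa~\cite{OS95} and Tits~\cite{Tits}, and the sentence immediately following it makes this explicit: ``We will not define all the algebraic terminologies because the only property we will use in the proofs is the last one.'' The theorem functions purely as a black box justifying the definition of a non-expanding self-similar system; the paper never revisits any of the four conditions except sub-exponential growth. So there is no proof in the paper for your proposal to be compared against.

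On the substance of your sketch: the easy implications $(1)\Rightarrow(4)$ (Bass--Guivarc'h) and $(4)\Rightarrow(2)$ (counting words) are fine, and $(2)\Leftrightarrow(3)$ is exactly the content of~\cite{OS95}, so citing it there is appropriate rather than circular. The genuine gap is the transfer step you yourself flag in $(3)\Rightarrow(1)$: your ping-pong outline for passing from ``$G$ has no free sub-semigroup'' to ``$H=\langle G\rangle$ has no non-abelian free subgroup'' does not work as written. Left-multiplying words in $F\cup F^{-1}$ by some $g_0\in G$ and taking high powers gives no mechanism for the result to land in the \emph{semigroup} $G$, since $G$ need not absorb inverses of its generators. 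The route taken in~\cite{OS95} is different: one passes to the Zariski closure of $G$ (which equals that of $H$), uses that a connected linear algebraic group whose semigroup of rational points satisfies an identity must be solvable, and deduces virtual solvability of $H$ that way, after which Milnor--Wolf finishes as you say. If you want to repair your argument, replace the ping-pong step by the Zariski-closure argument.
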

We will not define all the algebraic terminologies because the only property we will use in the proofs is the last one.
\begin{defn}
Let $G$ be a group. Let $F\subset G$ be a finite set. We say that $G$ is \emph{non-expanding} with respect to $F$ (or $F$ is \emph{non-expanding}) if for each $\epsilon>0$,
\[
\# F_n\ll e^{\epsilon n},
\]
where $F_n\subset G$ is the collection of $f_1 \dotsb f_n$ for $f_1,\dots,f_n\in F$. 
\end{defn}

It is not difficult to see that whether a finitely generated (semi)group is non-expanding does not depend on the choice of generating set. The Tits alternative gives a precise characterisation of which subgroups and sub-semigroups of $GL(k,\RR)$ are non-expanding.

\begin{defn}[non-expanding self-similar system]
Let $\Lambda$ be a self-similar IFS. We say that it is \emph{non-expanding} if the collection of linear parts $\{O_i\}$ is non-expanding viewed as a subset of the Euclidean group on $\mathbb{R}^k$. 
We also say that a self-similar set/measure is non-expanding if one of its corresponding self-similar IFS is non-expanding.
\end{defn}
If a self-similar system is homogeneous (all linear parts are equal) then it is non-expanding. More generally, it is non-expanding if the group generated by the linear parts is finite (even if it non-abelian, for instance the group of rotations by multiples of $\pi/2$ about the coordinate axes in $\RR^3$). 
More generally still, virtually abelian groups (i.e. groups with finite index abelian subgroups) are non-expanding, and this property automatically holds for self-similar systems on $\RR$ or $\RR^2$. 
For $k \geq 3$, a self-similar system on $\RR^k$ may or may not be non-expanding. 
\begin{exm}
    Let $k=3$ and consider the collection $\{r_1,r_2\}\subset SO(3,\bR)$, where $r_1,r_2$ are rotations around the $x$ and $y$ axes respectively, with the same rotation angle $\arccos(1/3)$. 
    It is well-known (c.f. Banach--Tarski paradox) that $r_1,r_2$ generate a free group, hence the subgroup of $SO(3,\bR)$ generated by $r_1,r_2$ is expanding, and an IFS such as 
    \[ \{\bx \mapsto r_1(\bx)/2,\bx \mapsto r_2(\bx)/2 + (0,0,1)\} \] 
    is expanding. 
\end{exm}

\subsection{Gaussian curvature}
Let $M\subset\mathbb{R}^k$ be a smooth hypersurface. The notion of Gaussian curvature is defined on $M$ as a function of $m\in M$. 
For each $m\in M$, we denote the unit normal vector $N_m\in \mathbb{R}^{k-1}$. 
We need to fix an orientation beforehand and we will assume this. Then we can define a smooth map $G\colon m\in M\mapsto N_m\in \mathbb{S}^{k-1}$. This map is only well-defined locally unless $M$ is orientable in which case it is defined globally. 
The shape operator at $m\in M$ is defined to be $dG$ at $m$. 
It is a real symmetric operator and therefore it has eigenvalues and eigenvectors. Its eigenvalues are principle curvatures, its trace is the mean curvature and its determinant is the Gaussian curvature. 

If $M$ is the graph of a $C^2$ function $f$, then it is well known that at each point $(\bx,f(\bx))\in M$, the Gaussian curvature is proportional to the determinant of the Hessian matrix of $f$ at $\bx$. 
The proportion is a smooth function and it is nowhere zero.

\section{Proof of Theorem \ref{thm: image fourier decay}}\label{sec: provequantitative}

\subsection{Average of Fourier transform}

In the proof of Theorem~\ref{thm: image fourier decay} we will need to bound the Fourier transform of a measure at a frequency by an integral of its Fourier transform. 

\begin{lma}\label{lma: compactly supported}
    Let $\mu$ be a compactly supported Borel probability measure on $\mathbb{R}^k$ and fix a large $L > 0$. Then for all $\ba \in \RR^k$, 
    \begin{equation*}
        |\widehat{\mu}(\ba)| \ll \int_{B_{R}(\ba)} |\widehat{\mu}(\bxi)| d\bxi + R^{-L}
    \end{equation*}
    as $R \to \infty$, with implicit constant independent of $R$ and $\ba$. 
\end{lma}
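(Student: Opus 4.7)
The plan is to exploit compact support of $\mu$ on the spatial side so that, on the Fourier side, values of $\widehat{\mu}$ at a single point can be reconstructed from nearby values via convolution with a Schwartz kernel. Concretely, pick once and for all a cutoff $\phi \in C_c^{\infty}(\RR^k)$ with $\phi \equiv 1$ on $\supp(\mu)$. Then $\phi\mu = \mu$ as (finite) measures, and the convolution identity $\widehat{\phi \mu} = \widehat{\phi} * \widehat{\mu}$ (which is verified in a single line of Fubini, using that $\widehat{\mu}$ is a bounded continuous function and $\widehat{\phi}$ is Schwartz) gives the reproducing formula
\[
\widehat{\mu}(\ba) \;=\; \int_{\RR^k} \widehat{\phi}(\ba - \bzeta)\, \widehat{\mu}(\bzeta)\, d\bzeta.
\]

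With this representation in hand, split the integral at the ball $B_R(\ba)$:
\[
|\widehat{\mu}(\ba)| \;\le\; \int_{B_R(\ba)} |\widehat{\phi}(\ba - \bzeta)|\, |\widehat{\mu}(\bzeta)|\, d\bzeta \;+\; \int_{|\bzeta - \ba| > R} |\widehat{\phi}(\ba - \bzeta)|\, |\widehat{\mu}(\bzeta)|\, d\bzeta.
\]
On the near piece I bound $|\widehat{\phi}|$ by $\|\widehat{\phi}\|_\infty$, a constant depending only on $\phi$ (hence only on $\mu$), giving a contribution $\ll \int_{B_R(\ba)} |\widehat{\mu}(\bzeta)| \, d\bzeta$. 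On the far piece I use $|\widehat{\mu}| \le 1$ (since $\mu$ is a probability measure) together with the Schwartz decay $|\widehat{\phi}(\bw)| \ll_N (1+|\bw|)^{-N}$. After a change of variable $\bw = \ba - \bzeta$ and passing to polar coordinates, the tail contributes $\ll_N \int_R^\infty r^{k-1-N}\, dr \ll R^{k-N}$, which is $\le R^{-L}$ once $N$ is chosen at least $L+k$. Combining the two estimates yields the claim, with implicit constants depending only on $\phi$ (i.e.\ on $\supp \mu$) and on $L$, hence independent of both $R$ and $\ba$.

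The only nontrivial ingredient is the identity $\widehat{\phi \mu} = \widehat{\phi} * \widehat{\mu}$, but this is routine and not a true obstacle: since $\phi$ is Schwartz and $\mu$ has finite total mass, one verifies it by writing out both sides, swapping the order of integration in $\bx \in \supp \mu$ and $\bzeta \in \RR^k$, and recognising the inner integral as an inverse Fourier transform. Everything else is bookkeeping: the existence of a suitable $\phi$ follows from a standard partition of unity adapted to $\supp(\mu)$, and the uniformity of the constant in $\ba$ is automatic because the estimates depend only on the fixed function $\widehat{\phi}$.
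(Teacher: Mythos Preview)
Your proof is correct and follows essentially the same approach as the paper: multiply $\mu$ by a smooth cutoff $\phi$ that is identically $1$ on $\supp(\mu)$, use the convolution identity $\widehat{\mu} = \widehat{\phi} * \widehat{\mu}$, and split the resulting integral into the ball $B_R(\ba)$ (bounding $|\widehat{\phi}|$ by its sup norm) and its complement (using $|\widehat{\mu}|\le 1$ and Schwartz decay of $\widehat{\phi}$). Your write-up is in fact slightly more careful than the paper's about the change of variable in the tail term and the choice of decay exponent $N \ge L+k$.
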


\begin{proof}
Without loss of generality, assume that $\mu$ is supported inside the unit ball $B_1(\bzero)$ centred at the origin. Let $\phi$ be a smooth bump function with value $1$ in $B_1(\bzero)$ and zero outside of $B_2(\bzero)$. Consider the measure $\phi\mu$ giving mass $\int_A \phi(\bx) d\mu(\bx)$ to a Borel set $A \subseteq \RR^k$. 
By construction, $\phi\mu=\mu$, so the convolution theorem gives that for each $\ba \in \mathbb{R}^k$, 
\[
\widehat{\mu}(\ba)=\widehat{\phi\mu}(\ba) = \int_{\RR^k}\widehat{\mu}(\bxi)\widehat{\phi}(\ba - \bxi)d\bxi.
\]
Now, since $\mu$ is a probability measure, we always have $|\widehat{\mu}(\bxi)| \leq 1$, and also $\max_{\bxi \in \RR^k} |\widehat{\phi}(\bxi)| < \infty$. 
Therefore 
\begin{equation*}
    |\widehat{\mu}(\ba)| \ll \int_{B_{R}(\ba)} |\widehat{\mu}(\bxi)| d\bxi + \int_{\RR^k \setminus B_{R}(\ba)} |\widehat{\phi}(\bxi)| d\bxi \ll \int_{B_{R}(\mathbf{a})} |\widehat{\mu}(\bxi)| d\bxi + R^{-L}, 
\end{equation*}
as required. 
\end{proof}

\subsection{Decomposing and linearising the Fourier transform}
We next observe that the Fourier transform of a pushforward of a measure is the same as the Fourier transform of the lift of the measure to the graph of the pushforward function evaluated at a corresponding frequency. 
In this paper we need only the $d=1$ case of Lemma~\ref{lma: lifttograph} and~\ref{lma: startingdecomp}, but we prove them for general $d$ because this will be useful in~\cite{BYqualitative} and the proofs are very similar. 
Lemma~\ref{lma: lifttograph} will be applied for self-similar measures $\mu$, but we do not yet need to assume that $\mu$ is self-similar. 
\begin{lma}\label{lma: lifttograph}
Let $\mu$ be a compactly supported Borel probability measure on $\RR^k$ and let $f \colon \RR^k \to \RR^d$ be continuous. 
Consider the map $T\colon \RR^k \to \RR^{k+d}$, $\bx \mapsto (\bx,f(\bx))$ and the image measure $\mu_T$. 
Then for all $\bxi \in \RR^d$, letting $\mathbf{0}$ be the zero vector with $k$ entries,  
\[ \widehat{\mu_T}(\mathbf{0},\bxi) = \widehat{\mu_f}(\bxi).  \] 
\end{lma}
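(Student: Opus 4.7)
This is essentially a one-line computation via the pushforward change-of-variables formula. The plan is to unfold the definition \eqref{e:ftdef} of the Fourier transform of $\mu_T$ at the frequency $(\mathbf{0},\bxi)$, recognise the inner product $\langle (\mathbf{0},\bxi),(\bx,f(\bx))\rangle$ as $\langle \bxi, f(\bx)\rangle$ because the first $k$ coordinates of the frequency vanish, and then apply the change-of-variables formula for pushforwards a second time to collapse the integral against $\mu$ into an integral against $\mu_f$.

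\textbf{Execution.} Starting from the definition,
\[
\widehat{\mu_T}(\mathbf{0},\bxi) = \int_{\RR^{k+d}} e^{-2\pi i \langle (\mathbf{0},\bxi), \by \rangle}\, d\mu_T(\by).
\]
Since $\mu_T = T_*\mu$ and $T$ is continuous (hence measurable) with $\mu$ compactly supported, the change-of-variables formula for pushforwards gives
\[
\widehat{\mu_T}(\mathbf{0},\bxi) = \int_{\RR^k} e^{-2\pi i \langle (\mathbf{0},\bxi), (\bx, f(\bx))\rangle}\, d\mu(\bx) = \int_{\RR^k} e^{-2\pi i \langle \bxi, f(\bx)\rangle}\, d\mu(\bx),
\]
where in the last step the contribution from the first $k$ coordinates is zero because the first $k$ entries of the frequency vector are zero. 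Applying the change-of-variables formula once more, this time to the pushforward $\mu_f = f_*\mu$, yields
\[
\int_{\RR^k} e^{-2\pi i \langle \bxi, f(\bx)\rangle}\, d\mu(\bx) = \int_{\RR^d} e^{-2\pi i \langle \bxi, \by \rangle}\, d\mu_f(\by) = \widehat{\mu_f}(\bxi),
\]
which is the claimed identity.

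\textbf{Obstacles.} There really is no obstacle here; the result is a formal manipulation and is essentially a restatement of the functoriality of the Fourier transform under pushforwards, combined with the observation that the first $k$ coordinates of the frequency vector are irrelevant because they pair with the $\bx$-part of $T(\bx)=(\bx,f(\bx))$, which plays no role in $\widehat{\mu_f}$. The only thing to verify is that the change-of-variables formula applies, and this is immediate since $\mu$ is a finite Borel measure and the integrand is a bounded continuous function.
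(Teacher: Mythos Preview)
Your proof is correct and follows exactly the approach indicated in the paper, which simply says the result is ``a matter of unpacking the definition of the Fourier transform.'' You have written out this unpacking explicitly via the pushforward change-of-variables formula, which is precisely what is meant.
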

\begin{proof}
This is simply a matter of unpacking the definition of the Fourier transform from~\eqref{e:ftdef}. 
\end{proof} 

Now let $\mu$ be a self-similar measure on $\RR^k$. 
Without loss of generality, we can assume that the support of $\mu$ is contained in a unit cube. Let $\Lambda$ be an IFS for $\mu$. For each $\omega\in \Lambda^\bN$, following the composition of maps $\omega_0\omega_1\dotsb$, there is a least $l\geq 0$ such that the contraction ratio corresponding to $\omega_0^{l} \coloneqq \omega_0\dotsb \omega_l$ is at most $1/2^n$. We write $l=l_{\omega,n}$. Notice that the finite collection of paths
\[
\{\omega_{0}^{l_{\omega,n}}\}_{\omega\in\Lambda}
\]
corresponds to a covering of $\supp(\mu)$. 
Such a covering uses similar copies of $\supp(\mu)$ of sizes at most $1/2^n$ and at least $\rho_m 2^{-n}$ where $\rho_m$ is the smallest contraction ratio for $\mu$. Let $\mu_{\omega,n}$ be the corresponding similar copy of $\mu$; we denote its total mass to be $|\mu_{\omega,n}|$. Clearly, each such branch $\supp(\mu_{\omega,n})$ intersects at least one and at most $2^k$ many dyadic cubes in $\cD_n$. We choose only one such dyadic cube and associate it with $\mu_{\omega,n}$. As a result, for each dyadic cube $D_n\in\cD_n$, there is a collection of similar copies $\mu_{\omega,n}$ that are associated with this cube. We write $\mu_{\omega,n}\sim D_n$ for this association. We will call such a collection of $\omega$ to be $\Lambda_{D_n}$. We then obtain the following decomposition of $\Lambda$
\[
\{\Lambda_{D_n}\}_{D_n\in\cD_n}
\]
which induces a decomposition (or disintegration) of $\mu$. For each $\omega\in\Lambda_{D_n}$, the copy $\mu_{\omega,n}$ is supported in $3D_n$, the tripling of $D_n$ with the same centre. 

Given a $C^2$ map $f \colon \RR^k \to \RR^d$, write $\Gamma_f=\{ (\bx,f(\bx)):\bx\in \RR^k \} \subset \RR^{k+d}$ for the graph of $f$. 
Let $T_{\omega,n}$ be the ($k$-dimensional) tangent plane of $\Gamma_f$ at $\by_0=(\bx_0,f(\bx_0))$ for $\bx_0$ being the centre of $D_n$. 
Given $\mu_{\omega,n} \sim D_n \subset \RR^k$ and $\bxi \in \RR^{k+d}$ with first $k$ coordinates $0$, let $\bxi_{\omega,n} \in \RR^k$ be such that $(\bxi_{\omega,n} ,\bzero)$ is the projection of $\bxi$ to $\mathbb{R}^k\times\bzero \subset \RR^{k+d}$ along the direction that is orthogonal to $T_{\omega,n}$, i.e. 
\begin{equation}\label{e:definexiomegan}
((\bxi_{\omega,n},\bzero)-\bxi) \perp T_{\omega,n}.
\end{equation}
Note that in the following statement we do not yet assume that $f$ has positive Gaussian curvature. 
\begin{lma}\label{lma: startingdecomp}
    Let $\mu$ be a self-similar measure on $\RR^k$, let $f \colon \RR^k \to \RR^d$ be $C^2$, and let $T(\bx) = (\bx,f(\bx))$. 
    Then for sufficiently large $n \in \NN$ and $\bxi \in \RR^{k+d}$ with first $k$ coordinates being $0$, 
    \[ |\widehat{\mu_T}(\bxi)| \leq \sum_{D_n}\sum_{\mu_{\omega,n}\sim D_n} \left|\widehat{\mu}_{\omega,n}(\bxi_{\omega,n})\right|+O_{\mu,f}(|\bxi|/2^{2n}). \]
\end{lma}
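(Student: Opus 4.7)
The plan is to unpack $\widehat{\mu_T}(\bxi)$ via its definition, break $\mu$ into the pieces $\mu_{\omega,n}$ coming from the IFS, and then Taylor-expand the phase $\bx \mapsto \langle \bxi, T(\bx)\rangle$ to first order on each small piece. The remainder is $O(|\bxi|/2^{2n})$ because each $\supp(\mu_{\omega,n})$ lies in $3D_n$, whose diameter is $O(2^{-n})$, and $f$ is $C^2$. The linear part of the phase will turn out to be exactly $\langle \bxi_{\omega,n},\bx\rangle$ up to a $\bx$-independent constant, which is the key geometric identification.

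First I would write, using that the first $k$ coordinates of $\bxi$ vanish and that $T(\bx) = (\bx,f(\bx))$,
\[
\widehat{\mu_T}(\bxi) = \int_{\RR^k} e^{-2\pi i \langle \bxi',f(\bx)\rangle}\,d\mu(\bx) = \sum_{D_n \in \cD_n}\sum_{\omega\in\Lambda_{D_n}} \int e^{-2\pi i\langle \bxi',f(\bx)\rangle}\,d\mu_{\omega,n}(\bx),
\]
where $\bxi' \in \RR^d$ denotes the last $d$ coordinates of $\bxi$. Let $\bx_0$ be the centre of $D_n$. On $3D_n$ I would Taylor-expand $f(\bx) = f(\bx_0) + Df(\bx_0)(\bx-\bx_0) + R(\bx)$ where the $C^2$ assumption gives $|R(\bx)| \ll_f 2^{-2n}$ uniformly on $\supp(\mu_{\omega,n})$ for $n$ large. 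Hence
\[
e^{-2\pi i\langle \bxi',f(\bx)\rangle} = e^{-2\pi i\langle \bxi',f(\bx_0) + Df(\bx_0)(\bx-\bx_0)\rangle} + O_f(|\bxi|/2^{2n}),
\]
and integrating against $\mu_{\omega,n}$ (of total mass $|\mu_{\omega,n}|$) introduces an error of $|\mu_{\omega,n}|\cdot O_f(|\bxi|/2^{2n})$ per piece.

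The central step is the geometric identification. The tangent plane $T_{\omega,n}$ at $(\bx_0,f(\bx_0))$ is spanned by the vectors $(e_i,\partial_i f(\bx_0)) \in \RR^{k+d}$ for $i=1,\dots,k$. The defining condition~\eqref{e:definexiomegan} says $(\bxi_{\omega,n},\bzero) - \bxi = (\bxi_{\omega,n},-\bxi')$ is orthogonal to each of these basis vectors, which is equivalent to $(\bxi_{\omega,n})_i = \langle \bxi', \partial_i f(\bx_0)\rangle$, i.e.\ $\bxi_{\omega,n} = Df(\bx_0)^{\mathsf T}\bxi'$. Consequently,
\[
\langle \bxi', Df(\bx_0)(\bx-\bx_0)\rangle = \langle \bxi_{\omega,n}, \bx-\bx_0\rangle,
\]
so the linearised integral is $e^{-2\pi i c_{\omega,n}}\,\widehat{\mu_{\omega,n}}(\bxi_{\omega,n})$ for a phase $c_{\omega,n}$ independent of $\bx$.

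Finally I would sum over $\omega$ and $D_n$, apply the triangle inequality (which kills the unimodular factor $e^{-2\pi i c_{\omega,n}}$), and collect the errors. Since $\sum_{D_n}\sum_{\omega\in\Lambda_{D_n}}|\mu_{\omega,n}| = 1$, the total error is $O_{\mu,f}(|\bxi|/2^{2n})$, yielding the claimed bound. The main obstacle is really only the orthogonal-projection identification of $\bxi_{\omega,n}$ with $Df(\bx_0)^{\mathsf T}\bxi'$; once that is in hand, the rest is a clean Taylor estimate and triangle inequality.
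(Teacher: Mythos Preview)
Your proof is correct and follows essentially the same approach as the paper: decompose $\mu$ into the pieces $\mu_{\omega,n}$, Taylor-expand the phase to first order on each piece using the $C^2$ bound, and identify the linear part with $\bxi_{\omega,n}$. The paper phrases the last step more geometrically---it pushes $\mu_{\omega,n}$ onto the tangent plane $T_{\omega,n}$ and splits $\bxi=\bxi_\perp+\bxi_\parallel$---whereas you give the equivalent algebraic identification $\bxi_{\omega,n}=Df(\bx_0)^{\mathsf T}\bxi'$ directly from the orthogonality condition~\eqref{e:definexiomegan}; the two presentations are interchangeable.
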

\begin{proof}
    Consider the dyadic decomposition $\Lambda_{D_n}$ for $D_n\in\cD_n$ for $n\geq 1000$. 
    We obtain 
\begin{align}\label{e:initialbreakdown}
\begin{split}
\widehat{\mu_T}(\bxi)&=\int e^{-2\pi i \langle \by,\bxi \rangle}d\mu_T(\by)\\
&=\sum_{D_n\in\cD_n} \sum_{\mu_{\omega,n}: \mu_{\omega,n}\sim D_n}\int e^{-2\pi i \langle \by,\bxi \rangle}d\mu_{\omega,n,T}(\by),
\end{split}
\end{align}
where $\mu_{\omega,n,T}=(\mu_{\omega,n})_T$. 

Our aim is to estimate for each $\mu_{\omega,n}$ the integral
\[
\int e^{-2\pi i (\by,\bxi)}d\mu_{\omega,n,T}(\by).
\]
Consider the measure $\mu'_{\omega,n,T}$ obtained by pushing $\mu_{\omega,n}$ to $T_{\omega,n}$ by the affine projection keeping the first $k$ coordinates fixed. 
More precisely, $\mu'_{\omega,n,T}$ is the image of $\mu_{\omega,n}$ under the map sending $(x_1,\dotsc,x_k,x_{k+1},\dotsc,x_d) \in \RR^{k+d}$ to the unique element of $\{(x_1,\dotsc,x_k)\} \times \RR^d$ which lies on $T_{\omega,n}$. 
Observe that as long as $|\bxi|\leq 2^{2n}$,
\begin{align}
\begin{split}\label{e:linearise}
&\left|\int e^{-2\pi i \langle \by,\bxi \rangle }d\mu_{\omega,n,T}(\by)-\int e^{-2\pi i \langle \by,\bxi \rangle}d\mu'_{\omega,n,T}(\by)\right|\\
&=\left|\int e^{-2\pi i \langle (\bx,f(\bx)),\bxi \rangle }d\mu_{\omega,n}(\bx)-\int e^{-2\pi i \langle (\bx,f(\bx_0)+\nabla f(\bx_0)\cdot(\bx-\bx_0)),\bxi \rangle}d\mu_{\omega,n}(\bx)\right|\\
&=\left| \int e^{-2\pi i \langle (\bx,f(\bx_0)+\nabla f(\bx_0)\cdot(\bx-\bx_0)),\bxi \rangle}(e^{-2\pi i \langle (\bx,f(\bx)-f(\bx_0)-\nabla f(\bx_0)\cdot(\bx-\bx_0)),\bxi\rangle }-1)d\mu_{\omega,n}(\bx)\right|\\
&=O\left(\int |e^{-2\pi i \langle (\bx,f(\bx)-f(\bx_0)-\nabla f(\bx_0)\cdot(\bx-\bx_0)),\bxi \rangle}-1|d\mu_{\omega,n}(\bx)\right)\\
&=O(|\mu_{\omega,n}||\bxi|/2^{2n}),
\end{split}
\end{align}
using Taylor's theorem and recalling that $|\mu_{\omega,n}|$ is the total mass of $\mu_{\omega,n}$. 
We write $\bxi=\bxi_\perp+\bxi_\parallel$ where $\bxi_\perp$ is perpendicular to $T_{\omega,n}$ and $\bxi_\parallel$ is parallel to $T_{\omega,n}$. 
Then
\begin{align}\label{e:rescaleft}
\begin{split}
\Big| \int e^{-2\pi i \langle \by,\bxi \rangle}d\mu'_{\omega,n,T}(\by) \Big| &= \Big| \int e^{-2\pi i \langle \by-\by_0,\bxi \rangle }d\mu'_{\omega,n,T}(\by-\by_0) \Big| \\
&=\Big| \int e^{-2\pi i \langle \by-\by_0,\bxi_\parallel \rangle}d\mu'_{\omega,n,T}(\by-\by_0) \Big|\\
&= \Big| \widehat{\mu}_{\omega,n}(\bxi_{\omega,n}) \Big|.
\end{split}
\end{align}
Using~\eqref{e:linearise} and~\eqref{e:rescaleft}, the result follows from~\eqref{e:initialbreakdown}. 
\end{proof}

\subsection{Separation of tangents}
Let $f\colon \RR^k \to\mathbb{R}$ be a $C^2$ function with non-vanishing Hessian determinant over a compact set $K$. 
Then the manifold $\Gamma_f=(\bx,f(\bx))_{\bx\in\mathbb{R}^k}$ has non-vanishing Gaussian curvature over $K$. Let $n\geq 1000$ and consider the decomposition $\cD_n$. For each $D_n\in\cD_n$, let $x_{D_n}$ be its centre. Consider the tangent plane $T_{D_n}=T_{x_{D_n}}$ of $\Gamma_f$ on the point $x_{D_n}$. We write $T^{\perp}_{D_n}$ to indicate the normal direction to $T_{D_n}$ in real projective space $\bR\bP^k$ which we equip with the natural metric induced from the sphere $\mathbb{S}^{k}  \subset \RR^{k+1}$. 
\begin{lma}\label{lma: bounded overlaps}
    Let $n\geq 1000$. Consider all the dyadic cubes such that $D_n\cap K\neq\varnothing$. Consider the corresponding discrete set $\{T^\perp_{D_n}\}_{D_n}\subset\bR\bP^k$ for $D_n$ ranging over this collection $\mathcal{C}$. Then the union of balls
    \[
    \bigcup_{D_n \in \mathcal{C}} B_{2^{-n}}(T^\perp_{D_n})
    \]
    has bounded multiplicity with a bound that depends on $f$.
\end{lma}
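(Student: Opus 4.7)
The plan is to interpret the assignment $D_n \mapsto T^\perp_{D_n}$ as the restriction to dyadic cube centres of the Gauss map of the hypersurface $\Gamma_f$, and then exploit the fact that non-vanishing Gaussian curvature forces this Gauss map to be a local diffeomorphism with uniform bi-Lipschitz constants. This turns the bounded-multiplicity question into a lattice-packing count at scale $2^{-n}$.

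Concretely, I would first define the (projective) Gauss map $G(\bx) = T^\perp_{\bx}$ for $\bx$ in a small neighbourhood of $K$ in $\RR^k$, sending $\bx$ to the class in $\bR\bP^k$ of the unit normal to $\Gamma_f$ at $(\bx, f(\bx))$. The differential $dG_{\bx}$ coincides, up to a smooth non-vanishing factor, with the shape operator of $\Gamma_f$ at $(\bx, f(\bx))$, and its determinant is therefore proportional to the Gaussian curvature. By hypothesis this is non-zero on $K$, and by continuity of the Hessian of $f$ it remains non-zero on a compact enlargement $K'$ of $K$. Hence $G$ is a local diffeomorphism on $K'$.

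Next I would use the compactness of $K'$ to cover it by finitely many open sets $U_1, \dotsc, U_m$ on each of which $G$ is a diffeomorphism onto its image, and, shrinking if necessary, assume uniform constants $c_1, c_2 > 0$ with $c_1 |\bx - \by| \leq d_{\bR\bP^k}(G(\bx), G(\by)) \leq c_2 |\bx - \by|$ for all $\bx, \by \in U_i$ and all $i$. For $n$ large, every centre $x_{D_n}$ with $D_n \in \mathcal{C}$ lies in $K'$, hence in some $U_i$. Fixing $p \in \bR\bP^k$, each preimage $(G|_{U_i})^{-1}(B_{2^{-n}}(p))$ has diameter at most $2 c_1^{-1} 2^{-n}$, and, because cube centres at scale $n$ sit on a $2^{-n}$-spaced lattice, this preimage contains at most $C_0 = C_0(k,c_1)$ of them. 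Summing over the finitely many $U_i$ yields a uniform bound $m C_0$ on the number of $D_n$ with $T^\perp_{D_n} \in B_{2^{-n}}(p)$, which is exactly the bounded-multiplicity statement.

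The only minor subtlety is that the cube centre $x_{D_n}$ need not itself lie in $K$ when $D_n \cap K \neq \varnothing$; however $x_{D_n}$ always lies within distance $\sqrt{k} \cdot 2^{-n}$ of $K$, so for all sufficiently large $n$ it lies in $K'$, and the finitely many remaining small scales $n$ contribute only a bounded total number of cubes. Beyond this, there is no real obstacle: the lemma is a clean geometric packing estimate and its proof reduces entirely to the observation that non-vanishing Gaussian curvature makes the Gauss map a local diffeomorphism.
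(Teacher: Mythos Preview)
Your proof is correct and follows essentially the same route as the paper: both arguments observe that non-vanishing Gaussian curvature makes the Gauss map $\bx \mapsto T^\perp_{\bx}$ a local diffeomorphism (equivalently, bi-Lipschitz on small balls), cover $K$ by finitely many such neighbourhoods using compactness, and then count lattice points in a small preimage. Your version is slightly more careful in passing to a compact enlargement $K'$ to ensure the cube centres $x_{D_n}$ (which lie only near $K$, not in it) are covered, a point the paper's proof leaves implicit.
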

\begin{proof}
Since $\Gamma_f$ has non-vanishing Gaussian curvature over the compact set $K$, we see that for large enough $n$, the Jacobian determinant of the shape operator, and therefore also the principal curvatures (which are continuous functions), are uniformly bounded from below for all $D \in \cD_n$ such that $D\cap K\neq\varnothing$. 
Therefore for two adjacent cubes $D,D'\in \cD_n$, the distance between $T^\perp_{D}$ and $T^\perp_{D'}$ on $\bR \bP^k$ is $\gg 2^{-n}$. 
The implicit constant here depends on the second-order derivatives of $f$. Following this argument, we see that there is a constant $c>0$ such that for each $\bx\in K$, for all $\by\in K$ with $|\by-\bx|<c$,
\[
d(T^\perp_{\bx},T^\perp_{\by})_{\bR \bP^k}\gg |\bx-\by|.
\]
This proves the result with $K$ being replaced by $K\cap B_c$ where $B_c$ is any $c$-ball. Since $K$ is compact, we can cover $K$ with finitely many such $c$-balls. From here the lemma is proved. 
\end{proof}

\subsection{Proof in the homogeneous case}

We in fact prove the following more general quantitative estimate for Fourier decay involving the Fourier $l^p$ dimension (recall~\eqref{e:lpdef}). 
Unfortunately there has not yet been much study of the Fourier $l^p$ dimension of self-similar sets when $1 < p < 2$ strictly. 
Theorem~\ref{thm: image fourier decay} follows immediately from the $p \in \{1,2\}$ cases of the following theorem (together with Lemma~\ref{lma: correlationdim}). 
\begin{thm}\label{thm: allparams}
Under the assumptions of Theorem~\ref{thm: image fourier decay} we have $|\widehat{\mu_f}(\xi)|\ll |\xi|^{-\sigma}$ with arbitrary $0<\sigma < \max_{1 \leq p \leq 2} \sigma_p$, where 
\[ \sigma_p \coloneqq \frac{d_{p/(p-1)} + \kappa_p - k}{2p - k + 2\kappa_* + \kappa_p - d_{p/(p-1)}}. \] 
When $p=1$, $d_{p/(p-1)}$ means the Frostman exponent $d_{\infty}$. 
\end{thm}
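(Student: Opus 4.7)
The plan is to decompose $\widehat{\mu_f}(\xi)$ into a weighted sum of values of $\widehat{\mu}$ at well-separated frequencies $\eta_\omega$ of magnitude $\asymp 2^{-n}|\xi|$, and then use Hölder's inequality to bound this sum by an integral of $|\widehat{\mu}|^p$ which is controlled by the Fourier $l^p$-dimension $\kappa_p$.

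First, I would apply Lemmas~\ref{lma: lifttograph} and~\ref{lma: startingdecomp} (with $d=1$) at a scale $n \in \NN$, and invoke the self-similarity of $\mu$ to write each $|\widehat{\mu}_{\omega,n}(\bxi_{\omega,n})| = p_\omega |\widehat{\mu}(\eta_\omega)|$. This yields
\[
|\widehat{\mu_f}(\xi)| \ll \sum_{D_n \in \cD_n}\sum_{\omega \in \Lambda_{D_n}} p_\omega |\widehat{\mu}(\eta_\omega)| + \frac{|\xi|}{2^{2n}},
\]
where $\eta_\omega = r_\omega O_\omega^{-1}\bxi_{\omega,n} \in \RR^k$ has $|\eta_\omega| \asymp 2^{-n}|\xi|$ and whose direction is $O_\omega^{-1}$ applied to the tangent-normal of $\Gamma_f$ at the centre of $D_n$. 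Next, Lemma~\ref{lma: compactly supported} at an auxiliary scale $R$ replaces each $|\widehat{\mu}(\eta_\omega)|$ by $\int_{B_R(\eta_\omega)}|\widehat{\mu}|\, d\tau$ up to an $R^{-L}$ error, turning the sum into $\int|\widehat{\mu}|\, w\, d\tau$ with $w = \sum_\omega p_\omega \mathbf{1}_{B_R(\eta_\omega)}$.

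Hölder's inequality with dual exponents $p, q = p/(p-1)$ then yields
\[
\int|\widehat{\mu}|\, w \leq \|w\|_{L^q(\RR^k)} \cdot \|\widehat{\mu}\|_{L^p(\supp w)}.
\]
The second factor is controlled via the definition of $\kappa_p$: since $\supp w \subset B(\bzero, O(2^{-n}|\xi|))$, we have $\|\widehat{\mu}\|_{L^p(\supp w)} \ll (2^{-n}|\xi|)^{(k-\kappa_p)/p}$. The first factor is handled by applying Lemma~\ref{lma: bounded overlaps} together with the non-expanding hypothesis, which ensures that the $\eta_\omega$ are angularly separated at scale $\gg 2^{-n}$ (the number of distinct rotations $O_\omega$ grows subexponentially, so it does not overfill the angular slots permitted by the curvature-based bi-Lipschitz estimate). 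With $R$ chosen so the balls $B_R(\eta_\omega)$ have controlled multiplicity, one gets $\|w\|_{L^q} \ll R^{k/q}(\sum_\omega p_\omega^q)^{1/q} \ll R^{k/q} \cdot 2^{-n d_q/p}$ via the $L^q$-dimension of $\mu$ (or the Frostman exponent when $p=1$, interpreting $q=\infty$).

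Combining produces a bound of the form
\[
|\widehat{\mu_f}(\xi)| \ll R^{k/q}\, 2^{-n d_q/p}\,(2^{-n}|\xi|)^{(k-\kappa_p)/p} + \frac{|\xi|}{2^{2n}},
\]
which one then optimizes by choosing $R$ and $n$ as appropriate powers of $|\xi|$. The Assouad dimension $\kappa_*$ of $\supp\mu$ enters through the cardinality $\ll 2^{n\kappa_*}$ of $\{D_n : D_n \cap \supp\mu \neq \varnothing\}$, which constrains the useful range of $R$ relative to the minimum separation of the $\eta_\omega$ and ultimately shapes the denominator of $\sigma_p$. The main obstacle I anticipate is the combinatorial bookkeeping in the non-homogeneous case: unlike the homogeneous situation handled by Lemma~\ref{lma: homogquant} (where all $r_\omega, O_\omega$ agree), here the rotations $\{O_\omega\}$ generate a non-expanding sub-semigroup of $O_k(\RR)$, and one must carefully balance its subexponential growth against the angular-separation estimate of Lemma~\ref{lma: bounded overlaps}, while also comparing the cylinder sum $\sum_\omega p_\omega^q$ to the dyadic sum $\sum_{D\in\cD_n}\mu(D)^q$ that defines $d_q$ (which is possible because $\#\Lambda_{D_n}$ is uniformly bounded).
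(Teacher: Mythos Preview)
Your overall architecture matches the paper's: linearise via Lemmas~\ref{lma: lifttograph}--\ref{lma: startingdecomp}, smear each point value of $\widehat{\mu}$ into a small integral via Lemma~\ref{lma: compactly supported}, apply H\"older with exponents $(p,q)$ to split off a factor controlled by $d_q$ and a factor controlled by $\kappa_p$, then optimise the scale $n$ against $|\xi|$. The non-expanding case is indeed handled, as you say, by summing over the subexponentially many orbit classes of $(r_\omega,O_\omega)$.

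There is, however, a genuine gap in how you bring in $\kappa_*$. You write that $\kappa_*$ enters through the cardinality $\ll 2^{n\kappa_*}$ of dyadic $n$-cubes meeting $\supp\mu$, and you propose to choose $R$ so that the balls $B_R(\eta_\omega)$ have bounded multiplicity. Neither is correct. The cardinality of $n$-cubes meeting $\supp\mu$ is governed by the \emph{box} dimension of $\supp\mu$, not its Assouad dimension (and these can differ for overlapping self-similar sets). More importantly, the frequencies $\eta_\omega$ are only $\asymp |\xi|2^{-2n}$-separated, and since one must take $|\xi|=2^{\gamma n}$ with $\gamma<2$, this separation tends to zero. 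Choosing $R$ comparable to this separation would make the error $R^{-L}$ in Lemma~\ref{lma: compactly supported} blow up. The paper instead takes $R=2^{\varepsilon n}\to\infty$ and confronts the resulting heavy overlap head-on: Claim~\ref{claim: assouad} uses the Gauss map $\bx\mapsto P(\bx)$ (a local diffeomorphism by curvature) to pull the count of $\eta_\omega$ in a unit cube back to a covering-number estimate for $\supp\mu$ at two scales, and \emph{this} is where the Assouad dimension enters, producing the factor $(2^{2n}/|\xi|)^{\kappa_*}$ that ultimately sits in the denominator of $\sigma_p$. Without this step your optimisation will not yield the stated exponent.

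A smaller point: your claim that $\#\Lambda_{D_n}$ is uniformly bounded is false in the presence of overlaps --- arbitrarily many cylinder measures $\mu_{\omega,n}$ can be associated to a single cube. The paper does not need this; it uses instead that $\sum_{\omega\sim D_n}|\mu_{\omega,n}|\le\mu(3D_n)$ and then a convexity inequality over the $3^k$ neighbouring cubes to pass from $\sum_{D_n}\bigl(\sum_{\omega\sim D_n}|\mu_{\omega,n}|\bigr)^q$ to $\sum_{D_n}\mu(D_n)^q$.
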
 

\begin{lma}\label{lma: homogquant}
Theorem~\ref{thm: allparams} holds under the additional assumption that $\mu$ is homogeneous. 
\end{lma}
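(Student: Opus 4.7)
The plan is to chain the three preceding lemmas with one application of H\"older's inequality, controlling the resulting factors by the $L^q$-dimension of $\mu$ and the Fourier $l^p$-dimension of $\mu$ respectively. First, Lemma~\ref{lma: lifttograph} gives $\widehat{\mu_f}(\xi)=\widehat{\mu_T}(\mathbf{0},\xi)$, and Lemma~\ref{lma: startingdecomp} (with $d=1$) yields
\[
|\widehat{\mu_f}(\xi)| \le \sum_{D_n}\sum_{\mu_{\omega,n}\sim D_n}|\widehat{\mu_{\omega,n}}(\xi_{D_n})| + O\!\left(|\xi|/2^{2n}\right),
\]
where computing the normal to the tangent plane at $(x_{D_n},f(x_{D_n}))$ identifies $\xi_{D_n}=\xi\nabla f(x_{D_n})\in\RR^k$. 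Under the homogeneity assumption, each $\mu_{\omega,n}=p_\omega (T_\omega)_*\mu$ for an affine map $T_\omega$ whose linear part is $(rO)^l$, where $l=l_{\omega,n}$ is chosen so that $r^l\asymp 2^{-n}$; hence $|\widehat{\mu_{\omega,n}}(\xi_{D_n})|=p_\omega|\widehat\mu(\eta_{D_n})|$ with $\eta_{D_n}:=r^l(O^T)^l\xi_{D_n}$, and $\sum_{\omega\sim D_n}p_\omega\asymp\mu(3D_n)$.

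Fix $p\in[1,2]$ with conjugate exponent $q=p/(p-1)$. H\"older's inequality bounds the main sum by
\[
\Bigl(\sum_{D_n}\mu(3D_n)^q\Bigr)^{1/q}\Bigl(\sum_{D_n}|\widehat\mu(\eta_{D_n})|^p\Bigr)^{1/p}.
\]
The first factor is controlled by the definition of $d_q$ (replaced by the Frostman exponent $d_\infty$ in the limiting case $p=1$), yielding $\lesssim 2^{-nd_q/p+\varepsilon n}$ via $(q-1)/q=1/p$. For the second factor I apply Lemma~\ref{lma: compactly supported} together with H\"older on the ball $B_R$ to get $|\widehat\mu(\eta)|^p\lesssim R^{k(p-1)}\int_{B_R(\eta)}|\widehat\mu|^p + R^{-pL}$, and then sum over $D_n$. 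The crucial geometric input is Lemma~\ref{lma: bounded overlaps}: combined with the bi-Lipschitz nature of $\nabla f$ (coming from the non-vanishing Hessian of $f$), it implies that adjacent $\eta_{D_n}$ are separated by $\gg|\xi|/2^{2n}$, and that for $R$ exceeding this minimum separation the packing multiplicity of $\{B_R(\eta_{D_n})\}$ is $\lesssim (R\cdot 2^{2n}/|\xi|)^{\kappa_*}$, the Assouad dimension $\kappa_*$ being inherited from $\supp(\mu)$.

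Taking $R=|\xi|^\varepsilon$ (so that $R^{-pL}$ is negligible for $L$ large, while $R^{k(p-1)}$ costs at most $\varepsilon$) and invoking the $l^p$-Fourier dimension bound $\int_{B(0,M)}|\widehat\mu|^p\lesssim M^{k-\kappa_p+\varepsilon}$ with $M\asymp 2^{-n}|\xi|$, the second H\"older factor is $\lesssim 2^{n(2\kappa_*-k+\kappa_p)/p}|\xi|^{(k-\kappa_*-\kappa_p)/p}$. Multiplying by the first factor and balancing the result against the linearisation error $|\xi|/2^{2n}$ via $2^n\asymp|\xi|^\beta$ with $\beta=(p-k+\kappa_*+\kappa_p)/(2p+2\kappa_*-k-d_q+\kappa_p)$ recovers precisely the exponent $\sigma_p$, up to arbitrary $\varepsilon$; maximising over $p\in[1,2]$ completes the proof. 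The main obstacle is to verify that the chosen $\beta$ lies in the regime $R>|\xi|/2^{2n}$ (equivalently $2^n>\sqrt{|\xi|}$), so that the $\kappa_*$-multiplicity factor is genuinely active; this reduces to $d_q+\kappa_p>k$, which under the standing hypothesis $\kappa_2>k/2$ holds for $p=2$ (where $d_2=\kappa_2$) and is precisely the positivity condition for $\sigma_p$.
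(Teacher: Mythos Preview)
Your argument is correct and follows essentially the same route as the paper's proof: linearise via Lemmas~\ref{lma: lifttograph}--\ref{lma: startingdecomp}, use homogeneity to reduce to $|\widehat\mu|$ at the rescaled frequencies $\eta_{D_n}$, apply H\"older with exponents $(p,q)$, control one factor by the $L^q$-dimension and the other by combining Lemma~\ref{lma: compactly supported}, the Assouad multiplicity count, and the Fourier $l^p$-dimension, then balance against the linearisation error. Your parameter $\beta$ is exactly $1/\gamma$ in the paper's notation, and your verification that $\beta>1/2$ is the paper's check that $\gamma<2$. One small omission: you should also record that $\beta<1$ (equivalently $\gamma>1$), which is needed so that $M=|\xi|/2^n\to\infty$ and dominates $R=|\xi|^\varepsilon$; this is automatic from $d_q\le d_2=\kappa_2\le\kappa_*$, giving $p+\kappa_*-d_q>0$.
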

\begin{proof}
Recall that $T\colon \bx \mapsto (\bx,f(\bx))$, and recall the definition of $\bxi_{\omega,n}$ from~\eqref{e:definexiomegan}. 
Fix $1 \leq p \leq 2$ and $1 < \gamma < 2$, and let $q$ be such that $1/p + 1/q = 1$ (with $q=\infty$ if $p=1$). 
Let $\bxi=(0,\dots,0,\xi_{k+1})$ be such that $|\xi_{k+1}| = 2^{\gamma n}$ for some positive integer $n$. 
We apply Lemmas~\ref{lma: lifttograph} and~\ref{lma: startingdecomp} in the $d=1$ case, giving 
\[ |\widehat{\mu_f}(\xi_{k+1})| = |\widehat{\mu_T}(\bxi)| \leq \sum_{D_n \in \mathcal{D}_n}\sum_{\mu_{\omega,n}\sim D_n} \left|\widehat{\mu}_{\omega,n}(\bxi_{\omega,n})\right|+O(|\bxi|/2^{2n}). \]
As $\mu$ is homogeneous, for a given $n$, $\mu_{\omega,n}$ are all at exactly the same scale and orientation (related to the scale and orientation of $\mu$). 
Using the self-similarity of $\mu$, 
\begin{equation}\label{eqn: measure decomposition}
|\widehat{\mu_T}(\bxi)| \leq \sum_{D_n}\sum_{\mu_{\omega,n}\sim D_n} |\mu_{\omega,n}| |\widehat{\mu}(r_n O_n(\bxi_{\omega,n}))| + O(|\bxi|/2^{2n}),
\end{equation}
where $r_n\asymp 2^{-n}$ is the contraction ratio of the copy $\mu_{\omega,n}$ and $O_n^{-1} \in O_k(\mathbb{R})$ is its orientation (note that these are all the same for each $\mu_{\omega,n}$). 
We write $\bxi_{D_n} \coloneqq O_n(\bxi_{\omega,n})$ since there is no dependence on $\omega$ as long as $\mu_{\omega,n}\sim D_n$. 
By Lemma~\ref{lma: compactly supported}, for all $\varepsilon > 0$ we can bound 
\begin{equation}\label{e:usingcompactlysupported}
|\widehat{\mu}(r_n \bxi_{D_n})| \ll \int_{B_{2^{\varepsilon n}}(r_n \bxi_{D_n})} |\widehat{\mu}(\bxi')| d\bxi' + 2^{-100k n}. 
\end{equation}
Uniformly for all $D_n,\bxi$ under consideration, $T^\perp_{D_n}$ is bounded away from being orthogonal to $\bxi$. Thus $|\bxi_{D_n}|\ll |\bxi|$ where the implicit constant does not depend on $D_n$ and $\bxi$. Therefore each $\bxi_{D_n} r_n$ is $\ll |\bxi|/2^n$ away from the origin. 

Since $(\bx,f(\bx))_{\bx\in\mathbb{R}^k}$ has non-vanishing Gaussian curvature over $\supp(\mu)$, we see that for different $D_n$, the $\bxi_{D_n}r_n$ are $|\bxi| 2^{-2n}$ separated with bounded multiplicity. 
For this, we mean that the discrete set
\begin{equation*}
F=\{\bxi_{D_n}r_n\}_{D_n\in\cD_n}\subset\mathbb{R}^k
\end{equation*}
can be divided into disjoint cubes of size $|\bxi|/2^{2n}$ with the property that each cube contains at most a bounded number (independent of $n$) of points in $F$ (see Lemma~\ref{lma: bounded overlaps}). 
Since $\gamma < 2$ (so $|\bxi|\ll 2^{2n}$) the points $\bxi_{D_n}r_n$ are not necessarily $\gg 1$ separated. 
Thus it is a non-trivial problem to estimate the concentration of those points in unit scaled cubes.

\begin{claim}\label{claim: assouad}
Let $D$ be any cube of sidelength $1$. For each $\kappa_*' > \kappa_*$, uniformly for all $\bxi$ such that $2^n\leq |\bxi|\leq 2^{2n}$ we have 
\[
\# ( \{\bxi_{D_n}r_n\}_{D_n\in\cD_n}\cap D ) \ll (2^{2n}/|\bxi|)^{\kappa_*'}.
\]
\end{claim}

\begin{proof}[Proof of claim]
First note that there is some cube $C$ (which need not be a dyadic cube) of size $\asymp 2^n/|\bxi|$ such that 
\[
\# (\{\bxi_{D_n}r_n\}_{D_n\in\cD_n}\cap D) \ll \#( \{\bxi_{D_n}/|\bxi|\}_{D_n\in\cD_n} \cap C ).
\]
The set $\{\bxi_{D_n}/|\bxi|\}_{D_n\in\cD_n}$ is $\{\Beta_{D_n}\}_{D_n\in\cD_n}$ for the unit vector $\Beta=(0,\dots,0,1)$. For each $\bx\in\mathbb{R}^k$, let $T_\bx$ be the tangent space of $\Gamma_f$ at $(\bx,f(\bx))$. Let $P(\bx)$ be the projection of $\Beta$ to $\mathbb{R}^k\times\{0\}$ in the direction that is orthogonal to $T_\bx$. 
This map $\bx \mapsto P(\bx)$ is $C^1$ and it is a local $C^1$ diffeomorphism near each $\bx$ by the inverse function theorem. 
By Lemma~\ref{lma: bounded overlaps}, points $\Beta_{D_n}$ are $2^{-n}$ separated with bounded multiplicity. 
This implies that $\# (\{\Beta_{D_n}\}_{D_n\in\cD_n} \cap C)$ is at most some bounded number times the maximal cardinality of a $2^{-n}$-separated set $A$ in $\supp(\mu)$ with the property that $P(A)\subset C$. Using Lemma~\ref{lma: bounded overlaps}, this maximal cardinality is then
\[
\ll \max_{\bx\in\mathbb{R}^k}N_{2^{-n}}( B_{2^n/|\bxi|}(\bx)\cap \supp(\mu))\ll (2^{2n}/|\bxi|)^{\kappa_*'}
\]
by the definition of the Assouad dimension $\kappa_*$. This proves the claim.
\end{proof}

We return to the proof of Lemma~\ref{lma: homogquant}. 
Fix $\kappa'_*>\kappa_*$, $d_q' < d_{q}$ and $\kappa'_p<\kappa_p$ very close to the respective exponents. 
The following important calculation starts from~\eqref{eqn: measure decomposition}, using~\eqref{e:usingcompactlysupported} and the fact that $\# F \ll 2^{kn}$: 
\begin{align*}
    &|\widehat{\mu_T}(\bxi)| \ll \sum_{D_n}\sum_{\mu_{\omega,n}\sim D_n} |\mu_{\omega,n}| \int_{B_{2^{\varepsilon n}}(r_n \bxi_{D_n})} |\widehat{\mu}(\bxi')| d\bxi' + 2^{kn} 2^{-100 k n} + 2^{\gamma n} 2^{-2n} \\ 
    &\ll \left( \sum_{D_n} \left( \sum_{\mu_{\omega,n}} |\mu_{\omega,n}|\right)^q \right)^{1/q} \left( \sum_{D_n} \left( \int_{B_{2^{\varepsilon n}}(r_n\bxi_{D_n})} |\widehat{\mu}(\bxi')| d\bxi' \right)^p \right)^{1/p}  + 2^{-(2-\gamma)n}; 
\end{align*}
the last line was by H\"older's inequality. 
Now we can apply Jensen's inequality twice. We can apply it to the convex function $t \mapsto t^q$ and use that each $D_n$ can intersect supports of measures $\mu_{\omega,n}$ from at most the $3^k$ surrounding cubes, so we can bring in the $L^q$ dimensions. 
We can also apply Jensen to $t \mapsto t^p$: 
\begin{align*}
    |\widehat{\mu_T}(\bxi)| &\ll \left( 3^{k(q + 1)} \sum_{D_n} (\mu(D_n))^q \right)^{1/q}\\ 
    &\times \left(  \sum_{D_n} (\vol(B_{2^{\varepsilon n}}(r_n\bxi_{D_n})))^{p-1} \int_{B_{2^{\varepsilon n}}(r_n\bxi_{D_n})} |\widehat{\mu}(\bxi')|^p d\bxi' \right)^{1/p} + 2^{-(2-\gamma)n}. 
\end{align*}
Each fixed frequency $\bxi'$ is within a distance of $2^{\varepsilon n}$ from $\ll 2^{\varepsilon k n}$ cubes of size $1$, each of which contains $\ll (2^{2n}/|\bxi|)^{\kappa_*'}$ points in $F$ by Claim~\ref{claim: assouad}, so $\bxi'$ appears in $\ll (2^{2n}/|\bxi|)^{\kappa_*'} 2^{\varepsilon k n}$ of the integrals in the previous line. Therefore, possibly increasing $\varepsilon$ if necessary, for some constant $C>0$, 
\begin{align}
    |\widehat{\mu_T}(\bxi)| &\ll 2^{-d_q' \cdot n\cdot (q-1)/q}  \left( \left(\frac{2^{2n}}{|\bxi|}\right)^{\kappa_*'} 2^{\varepsilon k n} \int_{\mbox{dist}(\bxi',F) \leq 2^{\varepsilon n}} |\widehat{\mu}(\bxi')|^p d\bxi' \right)^{1/p}  + 2^{-(2-\gamma)n} \label{i:useclaim} \\ 
    &\ll 2^{-d_q' n/p}   \left( \left(\frac{2^{2n}}{|\bxi|}\right)^{\kappa_*'} 2^{\varepsilon k n} \int_{B_{C|\bxi|/2^n}(\mathbf{0})} |\widehat{\mu}(\bxi')|^p d\bxi' \right)^{1/p}  + 2^{-(2-\gamma)n} \nonumber \\
    &\ll 2^{(-d_q' + (2-\gamma)\kappa_*' + \varepsilon k n + (k-\kappa_p')(\gamma - 1))n/p} + 2^{-(2- \gamma)n} \label{i:uselp}, 
\end{align} 
where in the last two lines we used that points in $F$ are $\ll |\bxi|/2^n$ away from the origin, and then used the definition of $l^p$ dimension. 

Now, the assumption $\kappa_2 > k/2$ and the fact that $\kappa_2 = d_2$ tell us that $\sigma_2 > 0$. 
If $d_q + \kappa_p \leq k$ then $\sigma_p \leq 0$ and trivially $|\widehat{\mu_f}(\xi)|\ll |\xi|^{-\sigma_p}$, so we can assume $d_q + \kappa_p > k$, and assume we had chosen close enough approximations so that $d_q' + \kappa_p' > k$. 
Then as $\gamma$ increases, the first term in~\eqref{i:uselp} will decrease, while the second term will clearly increase. 
    After some algebraic manipulation, the value of $\gamma$ at which both expressions are approximately equal is 
    \[ \gamma \coloneqq \frac{2p - k -d_{q}' + 2\kappa_*' + \kappa'_p}{p-k+\kappa'_p+\kappa'_*}, \]
    hence
    \[
    \frac{2-\gamma}{\gamma}=\frac{d_{q}' + \kappa_p' - k}{2\kappa_*' + \kappa_p' - d_q' + 2p - k}.
    \]
Since $d_q' < d_q \leq d_2 = \kappa_2 \leq \kappa_* < \kappa_*'$, we have $\gamma > 1$, and since $d_q' + \kappa_p' > k$ we indeed have $\gamma < 2$. 
From the definition of $\sigma_p$ we see that $(2 - \gamma)/\gamma$ can be made arbitrarily close to $\sigma_p$. 
    Therefore 
    \[
    |\widehat{\mu_T}(\bxi)| \ll 2^{-(2 - \gamma -\varepsilon)n} = |\bxi|^{-(2 - \gamma - \varepsilon)/\gamma} \ll |\bxi|^{-(\sigma_p - \varepsilon)}.
    \]

    Finally, increasing the implicit constant if needed, we have shown that $|\widehat{\mu_f}(\bxi)| \ll |\bxi|^{- (\sigma_p - \varepsilon)}$ for $|\bxi| \in [2^{-9+ (2 - \gamma)n/\gamma},2^{9+ (2 - \gamma)n / \gamma}]$, and so $|\widehat{\mu_f}(\bxi)| \ll |\bxi|^{- (\sigma_p - \varepsilon)}$ as $|\bxi| \to \infty$. 
    Since $p \in [1,2]$ and $\varepsilon > 0$ were arbitrary, this completes the proof. 
\end{proof}

\subsection{Proof in the non-expanding case}\label{ss: nonhomog}
We now describe how the proof of Lemma~\ref{lma: homogquant} can be adapted to the case where we only assume that $\mu$ is non-expanding. 
\begin{proof}[Proof of Theorem~\ref{thm: allparams}]
We cannot arrive at~\eqref{eqn: measure decomposition} because there is no uniform $r_n$ in this case. We step back to Lemma~\ref{lma: startingdecomp}. 
In this case $\mu_{\omega,n}$ are not at the same scale and orientation. Let $L_n$ denote the set of all possible scales and orientations of $\mu_{\omega,n}$. Then we see that for each $\epsilon>0$,
\begin{equation}\label{e:methodoftypespolynomial}
\#L_n\ll 2^{\epsilon n}.
\end{equation}
For each $g\in L_n$, we can apply the argument for the collection of $\mu_{\omega,n}$ that has the scale and orientation determined by $g$. 
Abusing notation slightly, we also write this collection as $g$. We obtain the following upper bound: 
\begin{align}\label{e:expandingmaincalc}
\begin{split}
|\widehat{\mu_T}(\bxi)| &\leq \sum_{D_n}\sum_{\substack{\mu_{\omega,n}\in g \\ \mu_{\omega,n}\sim D_n}} |\mu_{\omega,n}| |\widehat{\mu}(r_{n,g} O_{n,g}(\bxi_{\omega,n}))| + O\left( \sum_{\mu_{\omega,n}\in g}|\mu_{\omega,n}| |\bxi|/2^{2n}\right) \\
&\ll \sum_{g\in L_n}\sum_{D_n}\sum_{\substack{\mu_{\omega,n}\in g \\ \mu_{\omega,n}\sim D_n}} |\mu_{\omega,n}| |\widehat{\mu}(r_{n,g} O_{n,g}(\bxi_{\omega,n}))| + |\bxi|/2^{2n}. 
\end{split}
\end{align}
As in~\eqref{e:usingcompactlysupported}, we can use Lemma~\ref{lma: compactly supported} to see that 
\begin{equation}\label{e:expandinglargerball}
|\widehat{\mu}(r_{n,g} O_{n,g}(\bxi_{\omega,n}))| \ll \int_{B_{2^{\varepsilon n}}(r_{n,g} O_{n,g}(\bxi_{\omega,n}))} |\widehat{\mu}(\bxi')| d\bxi' + 2^{-100k n}. 
\end{equation}
Combining~\eqref{e:methodoftypespolynomial}, \eqref{e:expandingmaincalc} and~\eqref{e:expandinglargerball}, the rest of the arguments are the same as in the case when $\mu$ is homogeneous. 
Since $\varepsilon$ can be taken arbitrarily small, this finishes the proof.
\end{proof}

\begin{rem}\label{rem: loss}
Several estimates were made in the above proofs which are not sharp, even if the self-similar measure is assumed to be AD-regular and satisfy the SSC, and this means we cannot prove that $\sigma$ can be chosen to be close to~$\kappa_2/2$. 
    In particular, there can be loss when~\eqref{e:linearise} approximates $\mu_T$ locally by a suitable linear image of $\mu$ and when Lemma~\ref{lma: startingdecomp} accordingly bounds $\widehat{\mu_T}$ as a sum of magnitudes of Fourier transforms of the linear images. 
    In specific cases, it might be possible to drop the linearisation part of the error. 
    However, another place where there is potential loss is when we apply H\"older's inequality and bound the resulting integral of $\widehat{\mu}$ over the neighbourhood of a fractal set using the $l^p$ dimension, see Section~\ref{ss:fup}. 
\end{rem}

\section{Nonlinear arithmetic}\label{sec: arithmetic}
\subsection{Background and results}
The results in this section are motivated by the following problem. Given a fractal set $K$, how to tell if $K\cdot K$ has positive Lebesgue measure? Here we used the following notion for the \emph{arithmetic product} of subsets $K_i \subset \R$: 
\begin{equation*}
K_1\cdot K_2 \dotsb K_k \coloneqq \Big\{\prod_{i=1}^k x_i : x_1 \in K_1,\dotsc,x_k\in K_k\Big\}.
\end{equation*}
A more general problem is to consider the image $f(K\times K)$ for a certain smooth map $f\colon \mathbb{R}^2\to \mathbb{R}$. Such a problem was studied in~\cite{EGRS75} from number theory, in~\cite{MY,N79} from smooth dynamical systems and in~\cite{Fu2,Mar} from geometric measure theory. Recently, many new results have been proved. 
More precisely, we have~\cite{BY19,Y20a, Y20} extending~\cite{EGRS75}; \cite{SimonTaylor, T, T1, T2, Ya,ZRZJ} extending~\cite{MY,N79}; \cite{HS12, H14, Sh, Wu} extending \cite{Fu2}; and \cite{KO, O19, YuRadial, YuManifold} extending~\cite{Mar}. 
This is certainly not a complete list. For example, we have omitted great achievements regarding Falconer's distance conjecture; see~\cite{SW} for a recent account.

Our starting point is the following general result due to Marstrand. We do not state the most general version (which requires the notion of the transversal family as in~\cite{PS00}). In fact, we will state the theorem as high-level idea and then supply two concrete examples.

\begin{thm}[Peres--Schlag~\cite{PS00}, (generalised) Marstrand projection theorem, informal statement]
Let $\{P_a\}_a$ be a parametrised family of smooth maps $\mathbb{R}^2\to\mathbb{R}$ (satisfying certain conditions). 
Let $A\subset\mathbb{R}^2$ be a compact set with $\Haus A>1$. 
Then for almost all parameters $a$ (with respect to some natural measure), $P_a(A)$ has positive Lebesgue measure. 
\end{thm}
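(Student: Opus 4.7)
The plan is to apply the standard Marstrand energy method in the transversality framework of Peres--Schlag. Since the statement is deliberately informal, I will phrase the argument to highlight where the unstated ``certain conditions'' on $\{P_a\}$ are used.

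First I would fix any $s$ with $1 < s < \Haus A$ and invoke Frostman's lemma to produce a Borel probability measure $\mu$ supported on $A$ satisfying $\mu(B_r(\bx)) \ll r^s$ for all $\bx \in \RR^2$ and $r > 0$. For each parameter $a$, set $\mu_a \coloneqq (P_a)_\ast \mu$, so $\mathrm{supp}(\mu_a) \subseteq P_a(A)$. It therefore suffices to prove that for $\nu$-a.e.\ parameter $a$ (where $\nu$ is the natural reference measure on parameter space), the pushforward $\mu_a$ is absolutely continuous with respect to one-dimensional Lebesgue measure, since then $\mu_a$ has a nontrivial $L^1$ density and $P_a(A)$ must have positive Lebesgue measure.

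The quantitative heart of the argument is to show that the averaged $1$-energy is finite:
\[
\int I_1(\mu_a) \, d\nu(a) = \iint \left( \int \frac{d\nu(a)}{|P_a(\bx) - P_a(\by)|} \right) d\mu(\bx) \, d\mu(\by),
\]
by Fubini. If one can bound the inner integral by $C|\bx-\by|^{-1}$ (possibly with logarithmic corrections absorbed into the Frostman slack), then the outer double integral reduces to a small perturbation of the $1$-energy of $\mu$, which is finite because $\mu$ has Frostman exponent $s > 1$. Fatou/Fubini then gives $I_1(\mu_a) < \infty$ for $\nu$-a.e.\ $a$, so $\mu_a$ is absolutely continuous (in fact with an $L^2$ density), and the conclusion follows.

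The main obstacle, and the content packaged into ``certain conditions,'' is a transversality estimate of Peres--Schlag type:
\[
\nu\{a : |P_a(\bx) - P_a(\by)| \leq t\} \ll \frac{t}{|\bx - \by|} \qquad \text{for all } t > 0 \text{ and distinct } \bx, \by \in A.
\]
Verifying such a transversality condition for a given concrete family $\{P_a\}$ (and pinning down the natural measure $\nu$) is precisely where the geometric specifics of the parametrisation enter. Once transversality is in hand, it mechanically yields the inner-integral bound above and closes the argument. An equivalent Fourier formulation would estimate $\int \! \int |\xi| \, |\widehat{\mu_a}(\xi)|^2 \, d\xi \, d\nu(a)$ via Plancherel and run along the same lines; this is the perspective closer in spirit to the rest of the paper.
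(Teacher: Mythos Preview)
The paper does not prove this statement; it is quoted as background from Peres--Schlag~\cite{PS00} with an informal formulation, and no proof is given or claimed. So there is no proof in the paper to compare against.

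As for your sketch itself: the transversality framework is the right one, but the specific route through the $1$-energy $I_1(\mu_a) = \iint |P_a(\bx)-P_a(\by)|^{-1}\,d\mu(\bx)\,d\mu(\by)$ does not work as written. In $\RR^1$ the kernel $|u-v|^{-1}$ is not locally integrable, so $I_1(\nu)=\infty$ for \emph{any} measure $\nu$ with a continuous positive density on an interval; hence ``$I_1(\mu_a)<\infty$ implies $\mu_a$ absolutely continuous with $L^2$ density'' is vacuous rather than useful. Relatedly, the layer-cake computation from transversality gives $\int |P_a(\bx)-P_a(\by)|^{-1}\,d\nu(a)=\infty$ (the logarithmic divergence you flag is genuine and cannot be absorbed). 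The correct implementation is the $L^2$ version you mention at the end: bound $\int \|\mu_a\ast\phi_r\|_{L^2}^2\,d\nu(a)$ uniformly in $r$ (equivalently $\int\!\int |\widehat{\mu_a}(\xi)|^2\,d\xi\,d\nu(a)<\infty$), where the inner integral over $a$ of $r^{-1}\phi\big((P_a(\bx)-P_a(\by))/r\big)$ is $\ll |\bx-\by|^{-1}$ by transversality, and the outer integral is the (finite) $1$-energy of the Frostman measure $\mu$ on $\RR^2$. That version is sound; the $I_1(\mu_a)$ version is not.
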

\begin{exm}
    For $a\in \mathbb{S}^1$, we can define $P_a\colon (x,y)\mapsto x\cos a+y\sin a$. 
    It is the orthogonal projection on direction $a$. The corresponding result is then the classical Marstrand's projection theorem for linear projections.
\end{exm}
\begin{exm}
    For $\ba\in\mathbb{R}^2$, we can define $P_{\ba}\colon \bx\mapsto (\bx-\ba)/|(\bx-\ba)|\in\mathbb{S}^1$. This is the radial projection centred at $\ba$. The corresponding result is then the classical Marstrand projection theorem for radial projections.
\end{exm}

For general sets, there is no hope of upgrading Marstrand's projection theorems to cover \emph{all} parameters. However, if one assumes that $K$ has additional structure then the result can often be improved, as we see in \cite[Corollary~2.9 and Theorem~1.3]{BaranyArithmetic} and~\cite{YuRadial, YuManifold}. 
One interesting structure for us to explore is coded as the notion of self-similarity. 
In this direction, one result that is known is the following. 
Recall that we made the standing assumption that the contractions in an IFS never share a common fixed point. 

\begin{thm}[follows from Hochman--Shmerkin~\cite{HS12}\footnote{The details of how this follows from~\cite{HS12} are described in {\color{cyan}\textit{https://mathoverflow.net/questions/132445/arithmetic-products-of-cantor-sets}}, accessed 4 May 2025. The rational incommensurability assumption is likely to be redundant due to \cite[Theorem~1.23]{H10} (this is certainly true in the $k=2$ sets case) but we do not pursue this topic in this article.}]\label{thm: hochmanshmerkin}
    Let $E_1,\dotsc,E_k$ be self-similar sets in $\RR$. Suppose that for each $E_i$ there is some contraction ratio $r_i$ such that $\{ \log r_1,\dotsc,\log r_k\}$ are linearly independent over $\mathbb{Q}$ (in other words $E_1,\dotsc,E_k$ are rationally incommensurable). 
    Then 
    \[ \Haus (E_1 \cdot E_2 \dotsb E_k)=\min\{\Haus E_1+ \dotsb + \Haus E_k,1\}. \]
\end{thm}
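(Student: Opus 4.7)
The plan is to combine the trivial upper bound with the projection theorem of Hochman and Shmerkin~\cite{HS12}. For the upper bound, each $E_i$ is compact (self-similar sets are compact), so the multiplication map $(x_1,\dots,x_k)\mapsto x_1\cdots x_k$ is Lipschitz on $\prod_i E_i$, giving $\Haus(E_1\cdots E_k)\le \Haus(\prod_i E_i)\le \sum_i\Haus E_i$; trivially $\Haus(E_1\cdots E_k)\le 1$.

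For the lower bound, I would proceed as follows. First, for each $\varepsilon>0$ and each $i$, find a self-similar measure $\mu_i$ on $E_i$ with $\Haus\mu_i\ge\Haus E_i-\varepsilon$ and with the distinguished ratio $r_i$ still among its contraction ratios (possible by invoking Hochman's exact-overlaps theorem~\cite{H14} or by passing to a sub-IFS satisfying the OSC). Translate each $E_i$ (and adjust the defining IFS accordingly) so that $E_i\subset(0,\infty)$; the multiplication map $m(x_1,\dots,x_k)=x_1\cdots x_k$ is then $C^\infty$ with nowhere-vanishing gradient on $\prod_i E_i$. Apply the Hochman--Shmerkin projection theorem~\cite{HS12} to the product self-similar measure $\mu_1\times\cdots\times\mu_k$ under the $C^1$ projection $m$: the rational independence of $\{\log r_1,\dotsc,\log r_k\}$ is exactly the hypothesis that rules out a degenerate projection, giving
\[
\Haus m_*(\mu_1\times\cdots\times\mu_k) \;=\; \min\Big\{\sum_i\Haus\mu_i,\,1\Big\}.
\]
Since $m(\prod_i E_i)=E_1\cdots E_k$, letting $\varepsilon\to 0$ yields the desired lower bound.

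The main obstacle is fitting the multiplication map into the framework of~\cite{HS12}: their theorem is most often phrased for linear projections of self-similar product measures on $\mathbb{R}^k$, whereas we need it for the nonlinear map $m$. One might attempt to take logarithms to convert multiplication to addition and then apply the linear version, but this breaks self-similarity of the component measures $(\log)_*\mu_i$, so the standard form of~\cite{HS12} no longer applies directly. The cleanest reconciliation---which is the route indicated in the MathOverflow discussion cited in the footnote---is to invoke the version of~\cite{HS12} valid for $C^1$ projections of products of self-similar measures whose linear parts satisfy the required irrationality condition. Once this is accepted, the remainder of the argument is bookkeeping.
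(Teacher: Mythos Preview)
The paper does not give a formal proof of this theorem; it is stated as a consequence of Hochman--Shmerkin~\cite{HS12} with a footnote pointing to a MathOverflow discussion for details, and only a one-paragraph heuristic sketch follows. Your proposal is consistent with that sketch and correctly identifies the structure of the argument: trivial upper bound, pass to measures, and invoke the Hochman--Shmerkin projection machinery under the incommensurability hypothesis.

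The one difference worth noting is emphasis. The paper's sketch explicitly takes the logarithm route---reducing $E\cdot F$ to $\log E + \log F$ and viewing this as a linear projection of $\log E \times \log F$---whereas you flag this as problematic because $\log E_i$ is no longer self-similar and then retreat to a $C^1$-projection version of~\cite{HS12}. In fact the logarithm route is the standard one: although $\log E_i$ is not self-similar, the pushforward $(\log)_*\mu_i$ of a self-similar measure still generates an ergodic CP-distribution (equivalently, is a uniformly scaling measure) in the Furstenberg--Hochman sense, and this is precisely the class to which the local entropy averages theorem of~\cite{HS12} applies. So your worry about ``breaking self-similarity'' is not a genuine obstacle, and the linear-projection formulation after taking logs is the cleaner path rather than appealing to a nonlinear variant. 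Otherwise your outline is sound.
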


Unlike Marstrand's projection theorem, there is no uncertainty in Theorem~\ref{thm: hochmanshmerkin}. The idea behind its proof (in the $k=2$ case for simplicity) relies on relating the arithmetic product set $E\cdot F$ to $\log E+\log F$ and then to the sum set $E+F$, which is a certain linear projection of $E\times F$. 
Marstrand's projection cannot say much about this specific projection. 
However, under the assumption, $E+F$ will have the required dimension because at small scales, $E$ and $F$ look quite different and there should be no additive collision. 
As far as $E\cdot F$ is concerned, $\log E,\log F$ should have no common ``additive structure'' even if they have all the same contraction ratios! This leads us to believe that even stronger results can hold. 

\begin{conj}\label{conj: multiplication}
     Let $E_1,\dotsc,E_k$ be $k\geq 2$ self-similar sets in $\RR$. 
     If \[ \sum_{i=1}^k\Haus E_i>1,\] then $E_1\cdot E_2 \dotsb E_k$ has positive Lebesgue measure. 
\end{conj}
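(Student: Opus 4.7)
The plan follows the scheme used for Theorem~\ref{thm: arith headline}: reduce multiplicative arithmetic on the line to additive convolution via the logarithm, and invoke quantitative Fourier decay from Theorem~\ref{thm: image fourier decay}. First, by passing to a subsystem of each defining IFS satisfying the open set condition and then to the measure of maximal dimension (see Section~\ref{ss:dims}), for each $E_i$ we obtain an AD-regular self-similar measure $\mu_i$ supported on a subset of $E_i$ of some exponent $s_i$ arbitrarily close to $\Haus E_i$, so we may assume $\sum_i s_i > 1$. Splitting each $E_i$ into its sign components and intersecting with a compact interval $[\delta, M] \subset (0,\infty)$ (replacing $E_i$ by $-E_i$ where needed, and noting that the positive-measure property is preserved under taking a finite union of scaled copies) reduces to the case $\supp(\mu_i) \subset [\delta, M]$. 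Define $\nu_i = \log_* \mu_i$. Since $\nu_1 \ast \dotsb \ast \nu_k$ is supported on $\log(E_1 \dotsb E_k)$, it suffices to show that this convolution is absolutely continuous; equivalently, by Plancherel's theorem, that $\prod_{i=1}^k |\widehat{\nu_i}|^2 \in L^1(\RR)$.

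\textbf{Fourier-analytic input.} The graph of $\log$ has nowhere-vanishing Gaussian curvature, since $(\log x)'' = -x^{-2}$, so Theorem~\ref{thm: image fourier decay} provides $|\widehat{\nu_i}(\xi)| \ll |\xi|^{-\sigma_i}$ with an explicit $\sigma_i > 0$. Combined with the diffeomorphism invariance of the correlation dimension (so that $\kappa_2(\nu_i) = \kappa_2(\mu_i) = s_i$, giving $\int_{|\xi| \leq R} |\widehat{\nu_i}|^2 d\xi \ll R^{1 - s_i + \varepsilon}$ for any $\varepsilon > 0$), a dyadic decomposition shows that, after labelling one index as $1$ and applying pointwise decay to the rest, integrability of $\prod |\widehat{\nu_i}|^2$ holds whenever
\[
\sum_{i \ge 2} \sigma_i \; > \; \frac{1 - s_1}{2}.
\]
Under the conjecturally sharp Fourier decay rate $\sigma_i = s_i/2$ (i.e.\ Fourier dimension equal to the AD-regularity exponent), this condition becomes $\sum_{i \geq 2} s_i > 1 - s_1$, which is precisely $\sum_i s_i > 1$, the hypothesis of the conjecture. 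Choosing $s_1$ to be the largest of the $s_i$ optimises the calculation and deals with those factors where direct pointwise decay is unavailable.

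\textbf{Main obstacle.} The bottleneck is that Theorem~\ref{thm: image fourier decay} currently yields only $\sigma_i$ close to $(2 s_i - 1)/(2 s_i + 3)$ (or the slightly better $l^1$ variant when $\kappa_1 > 1 - s_i$), which is strictly smaller than the sharp $s_i / 2$ and in fact vanishes for $s_i \leq 1/2$. This suffices for the explicit quantitative thresholds in Theorem~\ref{thm: arith headline} but falls short of the full conjecture; in particular, the argument cannot even start when some $s_i$ drops below $1/2$. The hardest step, therefore, is not the convolution-analytic machinery but rather establishing that nonlinear pushforwards of $s$-AD-regular self-similar measures attain Fourier dimension arbitrarily close to $s$. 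Remark~\ref{rem: loss} pinpoints where the gap is introduced: in the local linearisation within Lemma~\ref{lma: startingdecomp} and in the subsequent Hölder/$l^p$ step in Section~\ref{sec: provequantitative}. Closing these losses, perhaps via sharper exponential-sum estimates, decoupling inequalities, or strengthened fractal uncertainty principles beyond those discussed in Section~\ref{ss:fup}, appears to be the decisive ingredient needed to upgrade the conditional chain above into an unconditional proof of Conjecture~\ref{conj: multiplication}.
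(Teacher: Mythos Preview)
The statement you are addressing is Conjecture~\ref{conj: multiplication}, which the paper explicitly leaves open (``These conjectures are wide open''), so there is no proof in the paper to compare against. Your proposal is not a proof but a conditional outline, and you are transparent about this: you correctly isolate the missing ingredient as the sharp Fourier decay $\sigma_i \approx s_i/2$ for nonlinear pushforwards of AD-regular self-similar measures. This is precisely the content of the paper's Question~\ref{q: optimal}, and the paper itself records (in Section~\ref{sec: further dev}) that an affirmative answer to that question would resolve Conjecture~\ref{conj: multiplication}. So your analysis is in line with the paper's own discussion rather than an alternative route.

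One technical point deserves more care. Your integrability criterion $\sum_{i\ge 2}\sigma_i > (1-s_1)/2$ is derived correctly, and under the sharp rate $\sigma_i = s_i/2$ it does collapse to $\sum_i s_i > 1$. However, as you note, Theorem~\ref{thm: image fourier decay} and even Question~\ref{q: optimal} are stated under the hypothesis $\kappa_2 > 1/2$ in dimension one, so the sharp bound is not even conjecturally available when some $s_i \le 1/2$. In regimes such as $k=4$ with all $s_i \approx 0.3$ your scheme therefore needs an extra idea (grouping factors, iterated convolution, or a version of the sharp decay valid below the $1/2$ threshold) before the conditional implication becomes clean. The paper's assertion that Question~\ref{q: optimal} implies Conjecture~\ref{conj: multiplication} presumably has such a reduction in mind, but neither you nor the paper spells it out.
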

A main goal of this section is to use Fourier decay to work towards Conjecture~\ref{conj: multiplication} and find dimension conditions that guarantee that arithmetic products have not only full dimension but also positive measure. 

One can formulate an analogous conjecture for \emph{multiplicative convolutions} of measures. Given Borel probability measures $\mu_i$ on $\RR$, their multiplicative convolution is defined by the pushforward 
\begin{equation*}
\mu_1\cdot \mu_2 \dotsb \mu_k \coloneqq (\mu_1\times\dots\times \mu_k)_m,
\end{equation*}
where $m \colon \mathbb{R}^k\to \mathbb{R}$ is the multiplication $(x_1,\dots,x_k) \mapsto \prod_{i=1}^k x_i$. 
\begin{conj}\label{conj:multconv}
    Let $\mu_1,\dots,\mu_k$ be self-similar measures supported in $(0,\infty)$ with $\sum_{i=1}^k\kappa_2(\mu_i)>1$. 
    Then $\mu_1\cdot \mu_2 \dotsb \mu_k$ is absolutely continuous with respect to Lebesgue measure. 
\end{conj}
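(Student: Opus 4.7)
The plan is to reduce multiplicative convolution to additive convolution via the logarithm, apply Theorem~\ref{thm: image fourier decay} to obtain polynomial Fourier decay of each log-pushed factor, and then use an interpolation argument to prove that the convolution has an $L^2$ density. Concretely, define $\nu_i \coloneqq \log_* \mu_i$, a compactly supported Borel probability measure on $\RR$. Since $\log(x_1 \cdots x_k) = \sum_i \log x_i$ on $(0,\infty)^k$, one has $\mu_1 \cdot \mu_2 \dotsb \mu_k = \exp_*(\nu_1 * \nu_2 * \dotsb * \nu_k)$; because $\exp$ is a diffeomorphism, it is equivalent to prove absolute continuity of $\nu_1 * \nu_2 * \dotsb * \nu_k$. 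The map $\log$ is $C^\infty$ with non-vanishing second derivative on $(0,\infty)$ (the one-dimensional analogue of non-vanishing Gaussian curvature), so applying Theorem~\ref{thm: image fourier decay} with $k=1$ to each $\mu_i$ and $f = \log$ yields a polynomial decay bound $|\widehat{\nu_i}(\xi)| \ll_i |\xi|^{-\sigma_i}$ for some explicit $\sigma_i > 0$. By the diffeomorphism-invariance of $\kappa_2$ noted after Lemma~\ref{lma: correlationdim}, $\kappa_2(\nu_i) = \kappa_2(\mu_i)$, so $\sum_i \kappa_2(\nu_i) > 1$.

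To finish, I would show $\prod_i \widehat{\nu_i} \in L^2(\RR)$, which by Plancherel gives an $L^2$ density for the convolution and hence in particular absolute continuity. Decompose $\RR$ dyadically into annuli $A_n = \{2^n \leq |\xi| < 2^{n+1}\}$ and, on each $A_n$, interpolate between the pointwise decay $|\widehat{\nu_i}(\xi)| \ll 2^{-n\sigma_i}$ and the integrated $\kappa_2$-bound $\int_{A_n} |\widehat{\nu_i}(\xi)|^2\, d\xi \ll 2^{n(1-\kappa_2(\mu_i)+\varepsilon)}$. Writing $|\widehat{\nu_i}|^2 = |\widehat{\nu_i}|^{2(1-\theta_i)} \cdot |\widehat{\nu_i}|^{2\theta_i}$ for parameters $\theta_i \in [0,1]$ with $\sum_i \theta_i \leq 1$, one estimates the first factor pointwise and applies H\"older to the second (with exponents $1/\theta_i$ and a residual factor to close $\sum_i 1/p_i = 1$), producing a bound of shape
\[
\int_{A_n} \prod_i |\widehat{\nu_i}|^2 \, d\xi \ll 2^{n \bigl(1 - \sum_i \theta_i \kappa_2(\mu_i) - 2 \sum_i (1-\theta_i) \sigma_i + \varepsilon\bigr)}.
\]
The aim is then to choose $\theta_i \in [0,1]$ so as to make this exponent strictly negative, uniformly in $n$, which would make the dyadic sum convergent.

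The hard part will be to match exactly the threshold $\sum_i \kappa_2(\mu_i) > 1$ without a uniform quantitative lower bound on the $\sigma_i$. Extremal choices of the $\theta_i$ --- setting $\theta_j = 1$ for a single index $j$ recovers the condition $\kappa_2(\mu_j) + 2\sum_{i\neq j}\sigma_i > 1$, while the symmetric choice $\theta_i = 1/k$ recovers $k^{-1}\sum_i \kappa_2(\mu_i) + 2(k-1)/k \cdot \sum_i \sigma_i > 1$ --- neither alone captures the full strength of $\sum \kappa_2 > 1$. I would try to close the gap by an iterative scheme in which pairwise convolutions $\nu_i * \nu_j$ are analysed first: each such convolution already has $\kappa_2$ strictly exceeding $\max(\kappa_2(\nu_i),\kappa_2(\nu_j))$, because the polynomial decay of one factor can be absorbed into the $L^2$-integral of the other, so iterating should allow the $\kappa_2(\nu_i)$'s to accumulate additively (up to the cap at $1$). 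An alternative would be to exploit sharper $l^p$-integrability of $|\widehat{\nu_i}|$ for $1 < p < 2$, as one could get from the fractal uncertainty principle improvements described in Section~\ref{ss:fup}; this would give a more flexible H\"older split. The case $k = 2$ appears most tractable with this strategy, since it reduces to a classical-flavoured statement about convolutions of two measures with matched energy conditions and symmetric use of both $\sigma_1$ and $\sigma_2$.
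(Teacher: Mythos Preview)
This statement is a \emph{conjecture}, and the paper explicitly says that it (together with Conjecture~\ref{conj: multiplication}) is ``wide open.'' There is no proof in the paper to compare against.

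Your overall strategy---pass to logarithmic space, apply Theorem~\ref{thm: image fourier decay} to each factor, then combine via H\"older/Cauchy--Schwarz---is exactly what the paper does to obtain its \emph{partial} results Theorem~\ref{thm: measure main} and Proposition~\ref{prop: escarith}. Those results already squeeze essentially the best thresholds out of this method (e.g.\ $\min\{\kappa_2(\mu),\kappa_2(\nu)\} > (\sqrt{65}-5)/4 \approx 0.765$ in the AD-regular two-factor case), and they are far from the conjectured $\sum_i \kappa_2(\mu_i) > 1$.

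You yourself flag the gap, but the fixes you propose do not close it. First, Theorem~\ref{thm: image fourier decay} in ambient dimension one requires $\kappa_2(\mu_i) > 1/2$; the hypothesis $\sum_i \kappa_2(\mu_i) > 1$ does not force this for each $i$ (take $k=3$ with one large and two tiny factors), so your very first step can fail. Second, the iterative scheme does not bootstrap: $\nu_i * \nu_j$ is neither self-similar nor the log-pushforward of a self-similar measure, so Theorem~\ref{thm: image fourier decay} cannot be reapplied to it, and the gain from convolving is exactly the $2\sigma_j$ already visible in a single H\"older split---no genuine additive accumulation of the $\kappa_2(\mu_i)$ emerges. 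Third, the FUP improvements of Section~\ref{ss:fup} yield only an unquantified $\upsilon > 0$ on top of the $l^2$ exponent, nowhere near the $\sigma_i \approx \kappa_2(\mu_i)/2$ your argument would need. The paper poses precisely that bound as the open Question~\ref{q: optimal} and notes that an affirmative answer would settle Conjecture~\ref{conj: multiplication}; in other words, what you are attempting is at least as hard as an open problem the paper highlights.
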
 

These conjectures are wide open. 
For certain specific self-similar sets $F$, such as the middle-third Cantor set, it has been proved (without using Fourier analysis) that $F \cdot F$ has positive Lebesgue measure and moreover non-trivial intervals~\cite{AthreyaCantor}. Related results have also been obtained under the assumption that the sets have large enough `thickness,' which quantifies size in a different way to Hausdorff dimension, see \cite[Proposition~2.9]{SimonTaylor}. 
Moreover, a recent result in~\cite{OSS} says that for Borel probability measures $\mu_1,\dots,\mu_k$ with $\kappa_2(\mu_1)+\dots+\kappa_2(\mu_k)>1$ the multiplicative convolution measure $\mu_1\cdot \mu_2\cdot \dots\cdot \mu_k$ has polynomial Fourier decay. The point there is that $\mu_1,\dots,\mu_k$ are not required to be self-similar. 

We will work towards Conjectures~\ref{conj: multiplication} and~\ref{conj:multconv}, deferring the proofs to Section~\ref{ss:nonlinearproofs} to avoid disrupting the narrative flow. 
Before stating our results, we state the following lemma which records well-known facts demonstrating why Fourier decay is useful; this lemma will be used repeatedly in our proofs. 
\begin{lma}[Theorem~3.12 in \cite{WolffLectures} and Theorem~3.4 in \cite{Ma2}]\label{lma: abscts}
    Let $\mu$ be a Borel probability measure on $\RR^k$. Then 
    \begin{itemize}
        \item If $\int_{\RR^k} |\widehat{\mu}(\xi)|^2 d\xi < \infty$ then $\mu$ is absolutely continuous with respect to Lebesgue measure and its density (i.e. its Radon--Nikodym derivative) is an $L^2$-function. 
        \item If $\int_{\RR^k} |\widehat{\mu}(\xi)| d\xi < \infty$ then $\mu$ is absolutely continuous with a density that is a continuous function. 
    \end{itemize}
\end{lma}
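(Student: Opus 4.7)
The statement is a standard consequence of Plancherel's theorem and Fourier inversion, so my plan for each bullet is to approximate $\mu$ by smooth functions in a way that directly exploits the integrability hypothesis on $\widehat{\mu}$, then identify the limit.

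For the first bullet, the plan is to mollify and apply Plancherel. Fix a Schwartz mollifier $\phi_{\varepsilon}$ with $|\widehat{\phi_{\varepsilon}}|\leq 1$ and $\widehat{\phi_{\varepsilon}}\to 1$ pointwise as $\varepsilon\to 0$, and set $f_{\varepsilon} := \mu * \phi_{\varepsilon}$; this is a smooth integrable function with $\widehat{f_{\varepsilon}} = \widehat{\mu}\,\widehat{\phi_{\varepsilon}}$. Plancherel applied to $f_{\varepsilon}$ gives the uniform bound
\[
\|f_{\varepsilon}\|_{L^{2}(\RR^{k})}^{2} = \int |\widehat{\mu}(\xi)|^{2}|\widehat{\phi_{\varepsilon}}(\xi)|^{2} d\xi \leq \int |\widehat{\mu}(\xi)|^{2} d\xi < \infty.
\]
By Banach--Alaoglu, a subsequence of $f_{\varepsilon}$ converges weakly in $L^{2}$ to some $f \in L^{2}(\RR^{k})$. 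On the other hand, a routine approximate-identity computation shows that $\int g\,f_{\varepsilon}\,dx \to \int g\,d\mu$ for every $g \in C_{c}(\RR^{k})$. Matching the two limits gives $\int g f \,dx = \int g\,d\mu$ for all such $g$, so by the Riesz representation theorem $d\mu = f\,dx$ with $f \in L^{2}$, as required.

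For the second bullet, the plan is to write down the density directly via Fourier inversion. Define
\[
f(x) := \int_{\RR^{k}} \widehat{\mu}(\xi)\,e^{2\pi i \langle \xi,x\rangle} d\xi.
\]
Since $\widehat{\mu} \in L^{1}$, this integral converges absolutely, $|f|\leq \|\widehat{\mu}\|_{L^{1}}$, and $f$ is continuous by dominated convergence. To identify $d\mu$ with $f\,dx$, I would test against an arbitrary Schwartz function $g$. One application of Fubini, justified by $\widehat{g} \in L^{1}$ and $|\widehat{\mu}|\leq 1$, yields $\int g\,d\mu = \int \widehat{g}(-\xi)\widehat{\mu}(\xi)\,d\xi$; a second Fubini, now justified by $\widehat{\mu} \in L^{1}$, rewrites this as $\int g(x) f(x)\,dx$. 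Density of Schwartz functions in $C_{0}(\RR^{k})$ then forces $d\mu = f\,dx$ with $f$ bounded and continuous.

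Since the lemma is classical, there is no essential obstacle; the only care needed is in the bookkeeping of the weak-$L^{2}$ limit in the first case and in checking that both Fubini swaps in the Parseval-type computation of the second case are legitimately justified, but the integrability assumption on $\widehat{\mu}$ in each bullet is tailored precisely to make this go through.
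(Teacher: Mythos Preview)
The paper does not actually supply a proof of this lemma: it is stated as a well-known fact with pointers to Wolff's lecture notes and Mattila's book, and then used as a black box in Section~\ref{sec: arithmetic}. Your argument is a correct and entirely standard proof of the cited result, so there is nothing to compare at the level of strategy.

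A couple of minor tightenings you could make. In the first bullet, rather than invoking Banach--Alaoglu and passing to a subsequence, you can observe directly that $\widehat{f_{\varepsilon}} \to \widehat{\mu}$ in $L^{2}$ by dominated convergence (since $|\widehat{\phi_{\varepsilon}}| \leq 1$ and $\widehat{\phi_{\varepsilon}} \to 1$ pointwise), so by Plancherel the $f_{\varepsilon}$ themselves are Cauchy in $L^{2}$ and converge strongly; this avoids the weak-compactness detour and the need to match limits along a subsequence. In the second bullet, your final sentence appeals to density of Schwartz functions in $C_{0}$, which implicitly requires $f\,dx$ to define a bounded functional on $C_{0}$; strictly speaking you should first note that the identity $\int g\,d\mu = \int gf\,dx$ for nonnegative Schwartz $g$ forces $f \geq 0$, and then monotone convergence with $g \uparrow 1$ gives $f \in L^{1}$ with $\|f\|_{1} = 1$. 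Alternatively, and more cleanly, the equality for all Schwartz $g$ already identifies $\mu$ with $f$ as tempered distributions, which is enough.
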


We are ready to state our main result on nonlinear arithmetic of self-similar sets, which immediately implies Theorem~\ref{thm: arith headline}. 
\begin{thm}\label{thm: enough variables mul}
Let $E,F,G$ be self-similar sets in $\RR$. 
\begin{itemize}
    \item If 
    \begin{equation}\label{e:twoselfsim}
    \Haus E \cdot \Haus F+\max\{1.5\Haus E+\Haus F,1.5\Haus F+\Haus E\}>2.5 ,
    \end{equation}
    then $E \cdot F$ has positive Lebesgue measure. 
    \item
    If 
    \[ \frac{1}{2}\Haus E + \frac{1}{2} \Haus F + \frac{\Haus G - 0.5}{\Haus G + 1.5}  > 1  \] 
    then $E\cdot F\cdot G$ has non-empty interior. 
    \end{itemize}
\end{thm}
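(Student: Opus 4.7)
The plan is to pass from multiplicative to additive convolutions via the logarithm, apply Theorem~\ref{thm: image fourier decay} to obtain quantitative Fourier decay for the pushforwards, and then invoke Lemma~\ref{lma: abscts} to deduce either absolute continuity (for the two-factor case) or continuity of the density (for the three-factor case), from which positive Lebesgue measure or non-empty interior of the set-theoretic arithmetic product follows.

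Setup and reduction to a logarithmic problem. Since the hypotheses force $\Haus E, \Haus F, \Haus G > 1/2$, I may, by restricting to suitable cylinder sub-IFSs (and possibly replacing a set with its negative), assume that each of $E, F, G$ lies in a common interval $[a,b] \subset (0, \infty)$ and carries an AD-regular self-similar sub-measure $\mu_E, \mu_F, \mu_G$ of AD-exponent $s_E, s_F, s_G$ arbitrarily close to $\Haus E, \Haus F, \Haus G$ respectively. By Lemma~\ref{lma: l2dimHausdorff}, for such measures $\kappa_2 = \kappa_* = d_{\infty} = s$. Next, let $L(x) = \log x$ on $[a,b]$; its graph in $\RR^2$ has nonvanishing Gaussian curvature since $L''(x) = -1/x^2 \neq 0$, so Theorem~\ref{thm: image fourier decay} with $k = 1$ yields $|\widehat{L_* \mu_E}(\xi)| \ll |\xi|^{-\sigma_E}$ for any $\sigma_E < (2 s_E - 1)/(3 + 2 s_E)$, and similarly for $F, G$. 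Since $\kappa_2$ is preserved by the $C^1$-diffeomorphism $L$, I also have $\kappa_2(L_* \mu_E) = s_E$, hence $\int_{B(0,R)} |\widehat{L_* \mu_E}|^2 \, d\xi \ll R^{1 - s_E + \varepsilon}$ for every $\varepsilon > 0$. The identity $\log(xy) = \log x + \log y$ translates into $L_*(\mu_E \cdot \mu_F) = L_* \mu_E * L_* \mu_F$ (and analogously for three factors), so density regularity of the additive convolution transfers back through the smooth inverse $\exp$ to the multiplicative convolution, and hence to the containing arithmetic product set.

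Fourier integrability. For the two-factor case, by Lemma~\ref{lma: abscts} it suffices to show that $\widehat{L_* \mu_E} \cdot \widehat{L_* \mu_F}$ is square-integrable. Dyadic decomposition in frequency, together with the pointwise decay of $\widehat{L_* \mu_F}$ and the $l^2$-dimension bound on $L_* \mu_E$, gives
\[ \int_{|\xi| \sim 2^n} |\widehat{L_* \mu_E}(\xi)|^2 \, |\widehat{L_* \mu_F}(\xi)|^2 \, d\xi \;\ll\; 2^{n(1 - s_E - 2\sigma_F + \varepsilon)}, \]
summable over $n \in \NN$ exactly when $s_E + 2 \sigma_F > 1$. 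Substituting the optimal $\sigma_F = (2 s_F - 1)/(3 + 2 s_F)$ and keeping the better of the two symmetric estimates (swapping the roles of $E$ and $F$), clearing denominators recovers precisely the hypothesis~\eqref{e:twoselfsim}. For the three-factor case, Lemma~\ref{lma: abscts} reduces matters to $L^1$-integrability of the triple product; on the annulus $|\xi| \sim 2^n$, I bound the $G$-factor pointwise by $2^{-n \sigma_G}$ and apply Cauchy--Schwarz to the $E, F$ factors, obtaining
\[ \int_{|\xi| \sim 2^n} |\widehat{L_* \mu_E}| \, |\widehat{L_* \mu_F}| \, |\widehat{L_* \mu_G}| \, d\xi \;\ll\; 2^{n(1 - (s_E + s_F)/2 - \sigma_G + \varepsilon)}, \]
summable when $\tfrac{s_E + s_F}{2} + \sigma_G > 1$. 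The optimal $\sigma_G = (2 s_G - 1)/(3 + 2 s_G) = (\Haus G - 0.5)/(\Haus G + 1.5)$ yields exactly the stated condition, and the resulting continuous density gives non-empty interior.

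The hard part of the argument is the initial reduction. Without any separation assumption on the original IFS, extracting AD-regular self-similar sub-IFSs supported in $(0, \infty)$ with dimension arbitrarily close to $\Haus E$ requires careful cylinder selection (for instance, a Schief-type argument extracting strongly separated deep sub-IFSs) and bookkeeping for sets that straddle the origin. Once this reduction is in place, the Fourier-analytic computations above are essentially routine, and the algebraic manipulations to match the stated dyadic summability thresholds to the precise inequalities in the theorem are straightforward verifications.
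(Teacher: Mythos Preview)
Your proposal is correct and follows essentially the same route as the paper: reduce to AD-regular self-similar sub-measures supported in $(0,\infty)$ (the paper isolates this as Lemma~\ref{lma: innerapproximation}), push forward by $\log$, apply Theorem~\ref{thm: image fourier decay} for pointwise decay and the $\kappa_2$-invariance under diffeomorphisms for the $L^2$ average, then use Lemma~\ref{lma: abscts} on the (additive) convolution and transfer back via $\exp$. The algebraic verification that $s_E + 2\sigma_F > 1$ (and its symmetric counterpart) is equivalent to~\eqref{e:twoselfsim}, and that $(s_E+s_F)/2 + \sigma_G > 1$ is the three-set condition, matches the paper's computation exactly. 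One small imprecision: in the three-factor case the hypothesis only forces $\Haus G > 1/2$, not all three dimensions, but this is harmless since you only invoke Theorem~\ref{thm: image fourier decay} for the $G$-factor there.
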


The bound~\eqref{e:twoselfsim} from Theorem~\ref{thm: enough variables mul} is illustrated in Figure~\ref{fig: twoselfsim}. 
\begin{figure}[ht]
	\centering
	\includegraphics[width=0.5\textwidth]{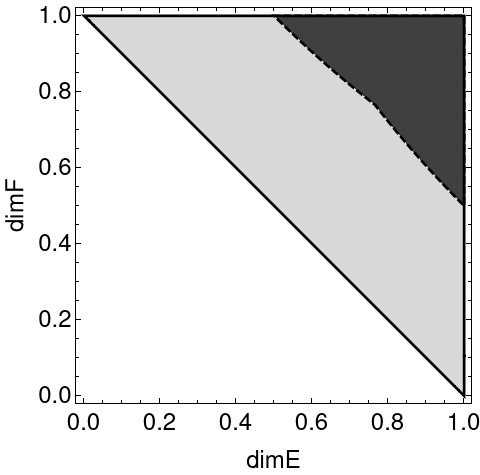}
	\caption{The dark shaded region describes the sufficient condition from~\eqref{e:twoselfsim} for the arithmetic product of two self-similar sets to have positive Lebesgue measure. The light shaded region is conjectured to be sufficient in Conjecture~\ref{conj: multiplication}.}
	\label{fig: twoselfsim}
\end{figure}

We emphasise that in Theorem~\ref{thm: enough variables mul}, we do not assume regularity (or separation) conditions.\footnote {We hide a technical point here that it is extremely difficult to determine the Hausdorff dimension of a self-similar set without any separation conditions. However, under the ESC, which holds extremely generically, Hausdorff dimension coincides with the similarity dimension~\cite{H14}, a quantity which is much easier to compute.}
\begin{rem}
    Similar results to Theorems~\ref{thm: enough variables mul} and~\ref{thm: arith headline} were obtained by Yu in~\cite{YuRadial,YuManifold} conditioned on the Fourier $l^1$-dimensions of measures. Although some of the thresholds there are better than the thresholds here, we note that Hausdorff dimension is in general larger in value and easier to compute than $l^1$-dimension.
\end{rem}
Of course, it is also possible to explicitly work out sufficient conditions involving more than three sets. We do not continue this route. 

We will in fact prove Theorem~\ref{thm: enough variables mul} using the following analogous result for AD-regular self-similar measures. 
\begin{thm}\label{thm: measure main}
    Let $\mu$, $\nu$, $\gamma$ be AD-regular self-similar measures supported in $(0,\infty)$. 
    \begin{itemize}
    \item If 
    \begin{equation}\label{e:twomeascondition}
    \kappa_2( \mu)\kappa_2(\nu)+\max\{1.5\kappa_2(\mu)+\kappa_2(\nu),1.5\kappa_2(\nu)+\kappa_2(\mu)\}>2.5
    \end{equation}
    then the multiplicative convolution $\mu \cdot \nu$ is absolutely continuous with an $L^2$-density. 
    \item If 
    \begin{equation}\label{e:threemeascondition}
    \frac{\kappa_2(\mu)}{2} + \frac{\kappa_2(\nu)}{2} + \frac{\kappa_2(\gamma) - 0.5}{\kappa_2(\gamma) + 1.5} >1, 
\end{equation}
then $\mu \cdot \nu \cdot \gamma$ is absolutely continuous with a continuous density. 
\end{itemize}
\end{thm}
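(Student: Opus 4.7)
The plan is to conjugate by the logarithm to convert the multiplicative convolutions into additive ones, apply Theorem~\ref{thm: image fourier decay} to obtain quantitative Fourier decay of each log-pushforward, and combine the resulting pointwise decay with the $L^2$-energy estimate coming from $\kappa_2$ in a weighted dyadic decomposition of frequency space.

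Write $\tilde\mu = \mu_{\log}$, $\tilde\nu = \nu_{\log}$, $\tilde\gamma = \gamma_{\log}$ for the pushforwards under $f(x)=\log x$; these are compactly supported probability measures on $\RR$, and $\mu\cdot\nu = (\tilde\mu * \tilde\nu)_{\exp}$, while $\mu\cdot\nu\cdot\gamma = (\tilde\mu * \tilde\nu * \tilde\gamma)_{\exp}$. Because $\exp$ is a smooth diffeomorphism on a neighbourhood of each (compact) support, a direct change of variables shows that $\mu\cdot\nu$ has an $L^2$-density (resp.\ $\mu\cdot\nu\cdot\gamma$ has a continuous density) provided $\tilde\mu * \tilde\nu$ does (resp.\ $\tilde\mu * \tilde\nu * \tilde\gamma$ does). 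By Lemma~\ref{lma: abscts} it therefore suffices to verify $\int_{\RR} |\widehat{\tilde\mu}\,\widehat{\tilde\nu}|^2 \, d\xi < \infty$ for Part~1 and $\int_{\RR} |\widehat{\tilde\mu}\,\widehat{\tilde\nu}\,\widehat{\tilde\gamma}| \, d\xi < \infty$ for Part~2. Since $f''(x)=-1/x^2\neq 0$ on $(0,\infty)$, the graph of $\log$ has non-vanishing Gaussian curvature over each support, and every self-similar system in $\RR$ is automatically non-expanding. Writing $s_\mu=\kappa_2(\mu)$, AD-regularity gives $s_\mu=\kappa_2(\mu)=\kappa_*(\mu)=d_\infty(\mu)$ (Lemma~\ref{lma: l2dimHausdorff}), and similarly for $\nu,\gamma$. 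Whenever $s_\mu>1/2$, Theorem~\ref{thm: image fourier decay} yields, for any $\epsilon>0$,
\[
|\widehat{\tilde\mu}(\xi)| \ll_\epsilon |\xi|^{-\sigma_\mu+\epsilon}, \qquad \sigma_\mu := \frac{s_\mu - 1/2}{s_\mu + 3/2},
\]
and analogously for $\tilde\nu,\tilde\gamma$. Because $\kappa_2$ is preserved by diffeomorphisms (remark after Lemma~\ref{lma: correlationdim}), we also have the $L^2$-energy bound $\int_{|\xi|\leq R}|\widehat{\tilde\mu}|^2\,d\xi \ll_\epsilon R^{1-s_\mu+\epsilon}$ for $R\geq 1$, and likewise for $\tilde\nu,\tilde\gamma$.

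For Part~1, I would split the target integral into dyadic shells $A_j:=\{\xi\in\RR:2^j\leq |\xi|<2^{j+1}\}$. For $j\geq 0$, bounding $|\widehat{\tilde\nu}|^2$ pointwise on $A_j$ and using the $L^2$-energy bound for $\widehat{\tilde\mu}$ on $A_j$ gives
\[
\int_{A_j} |\widehat{\tilde\mu}|^2 |\widehat{\tilde\nu}|^2 \, d\xi \ll_\epsilon 2^{j(1 - s_\mu - 2\sigma_\nu + O(\epsilon))},
\]
which is summable in $j\geq 0$ provided $s_\mu + 2\sigma_\nu > 1$; after clearing denominators this is precisely $s_\mu s_\nu + 1.5\, s_\mu + s_\nu > 2.5$. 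Exchanging the roles of $\mu$ and $\nu$ (i.e.\ putting the pointwise decay on $\tilde\mu$ instead) yields the second expression inside the max in~\eqref{e:twomeascondition}, and the contribution from $j<0$ is trivially finite since $|\widehat{\tilde\mu}|,|\widehat{\tilde\nu}|\leq 1$. For Part~2, I would apply Cauchy--Schwarz on each shell,
\[
\int_{A_j} |\widehat{\tilde\mu}\widehat{\tilde\nu}\widehat{\tilde\gamma}|\, d\xi \leq \Bigl(\int_{A_j}|\widehat{\tilde\mu}|^2\Bigr)^{1/2} \Bigl(\sup_{A_j}|\widehat{\tilde\gamma}|^2 \cdot \int_{A_j}|\widehat{\tilde\nu}|^2\Bigr)^{1/2} \ll_\epsilon 2^{j(1 - s_\mu/2 - s_\nu/2 - \sigma_\gamma + O(\epsilon))},
\]
which is summable in $j\geq 0$ iff $s_\mu/2 + s_\nu/2 + \sigma_\gamma > 1$; substituting the formula for $\sigma_\gamma$ recovers exactly~\eqref{e:threemeascondition}.

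The main obstacle is not any single hard estimate but rather the quantitative bookkeeping required to make the balance between pointwise decay and $L^2$-energy match the stated thresholds. In particular, one has to verify that~\eqref{e:twomeascondition} and~\eqref{e:threemeascondition} are strong enough to force $s>1/2$ for whichever measure is receiving the pointwise decay (otherwise Theorem~\ref{thm: image fourier decay} is vacuous there), and to absorb the $\epsilon$-losses from the decay and $L^2$-energy bounds; both are routine because the hypotheses are strict inequalities. In principle the $l^1$-refinement of Theorem~\ref{thm: image fourier decay} could sharpen the thresholds, but AD-regularity alone gives no direct lower bound on $\kappa_1$, so the $\kappa_2$-based argument above is the natural one at this level of generality.
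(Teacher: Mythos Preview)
Your proposal is correct and follows essentially the same approach as the paper: conjugate by $\log$, apply Theorem~\ref{thm: image fourier decay} for pointwise decay (using AD-regularity to get $\kappa_2=\kappa_*=d_\infty$ and hence the clean exponent $\sigma=\frac{s-1/2}{s+3/2}$), and combine with the $\kappa_2$-energy bound via a dyadic decomposition; the paper phrases the same balance using the weighted integral $\int|\widehat{\mu_L}|^2|\xi|^{-\rho}d\xi<\infty$ for $\rho>1-\kappa_2$, which is equivalent to your shell-by-shell summation. The only cosmetic difference is that the paper first treats the symmetric case $\mu=\nu$ and puts the pointwise decay on the other factor than you do, but since both orderings appear in the $\max$ in~\eqref{e:twomeascondition} this is immaterial.
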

The obvious analogue of Theorem~\ref{thm: arith headline} for AD-regular self-similar measures holds, but we will not state it. 

    One can also deduce results for measures without the AD-regularity assumption although the dimensional thresholds will be a bit worse. For instance, the following result for two measures holds; the $7/9 = 0.777\dots$ can be compared with the $0.765\dots$ from Theorem~\ref{thm: arith headline}. 
\begin{prop}\label{prop: escarith}
    Let $\mu$ and $\nu$ be self-similar measures supported in $(0,\infty)$. 
    Suppose that at least one of the following holds: 
    \begin{itemize}
    \item $\min\{\kappa_2(\mu),\kappa_2(\nu)\} > 7/9 = 0.777\dots$,
    \item $\max\{4\kappa_2(\mu) + 5\kappa_2(\nu), 5\kappa_2(\mu) + 4\kappa_2(\nu)\} > 7$,
    \item $\nu$ is AD-regular and $2\kappa_2(\mu) \kappa_2(\nu) + 3\kappa_2(\mu) + 2\kappa_2(\nu) > 5$.
    \end{itemize}
    Then $\mu \cdot \nu$ has an $L^2$-density. 
\end{prop}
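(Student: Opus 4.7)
The plan is to pass to logarithmic coordinates and then apply Theorem~\ref{thm: image fourier decay} to the log-pushforward measures. Set $\tilde\mu\coloneqq(\log)_*\mu$ and $\tilde\nu\coloneqq(\log)_*\nu$. Since $\log(xy)=\log x+\log y$, we have $(\log)_*(\mu\cdot\nu)=\tilde\mu*\tilde\nu$ (ordinary additive convolution). Because $\mu,\nu$ are self-similar with compact supports contained in $(0,\infty)$, the map $\log$ is a $C^\infty$-diffeomorphism on $\supp(\mu)\cup\supp(\nu)$ with Jacobian bounded above and below, so $\mu\cdot\nu$ has an $L^2$-density if and only if $\tilde\mu*\tilde\nu$ does. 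By Plancherel's theorem and Lemma~\ref{lma: abscts}, this in turn is equivalent to convergence of
\[
I\coloneqq\int_{\RR} |\widehat{\tilde\mu}(\xi)|^{2}\,|\widehat{\tilde\nu}(\xi)|^{2}\,d\xi.
\]

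Next, I would verify the hypotheses of Theorem~\ref{thm: image fourier decay} for the target map $\log$. Its graph in $\RR^{2}$ has nonvanishing Gaussian curvature because $(\log)''(x)=-1/x^{2}\ne 0$ on $(0,\infty)$, and every self-similar IFS on $\RR$ is non-expanding (its orthogonal part lies in $\{\pm 1\}$). Hence, whenever $\kappa_{2}(\mu)>1/2$, the $k=1$ case of Theorem~\ref{thm: image fourier decay} gives
\[
|\widehat{\tilde\mu}(\xi)|\ll|\xi|^{-\sigma_\mu}\quad\text{for every}\quad \sigma_\mu<\frac{2\kappa_{2}(\mu)-1}{3+2\kappa_{*}(\mu)},
\]
and symmetrically for $\tilde\nu$. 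Since $\supp(\mu)\subset\RR$ we have the trivial bound $\kappa_{*}(\mu)\leq 1$; and when $\nu$ is AD-regular we may moreover use $\kappa_{*}(\nu)=\kappa_{2}(\nu)$.

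The heart of the argument combines these pointwise decays with the energy characterisation of the correlation dimension (Lemma~\ref{lma: correlationdim}), invoking the diffeomorphism-invariance $\kappa_{2}(\tilde\mu)=\kappa_{2}(\mu)$. Splitting $I=\int_{|\xi|\leq 1}+\int_{|\xi|>1}$, the first piece is trivially finite; for the tail I would bound $|\widehat{\tilde\nu}(\xi)|^{2}$ pointwise and apply Lemma~\ref{lma: correlationdim} to $\tilde\mu$ with $s=1-2\sigma_\nu$, giving
\[
\int_{|\xi|>1}|\widehat{\tilde\mu}|^{2}|\widehat{\tilde\nu}|^{2}\,d\xi\ll\int_{|\xi|>1}|\widehat{\tilde\mu}|^{2}\,|\xi|^{-2\sigma_\nu}\,d\xi<\infty
\]
provided $\sigma_\nu>(1-\kappa_{2}(\mu))/2$. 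Inserting the sharpest $\sigma_\nu$ permitted by Theorem~\ref{thm: image fourier decay}, then running the same argument with $\mu$ and $\nu$ swapped, produces the third bullet (exploiting $\kappa_{*}(\nu)=\kappa_{2}(\nu)$) and the asymmetric linear conditions in the second bullet (using $\kappa_{*}\leq 1$). For the first bullet, one can alternatively apply Cauchy--Schwarz to obtain $I\leq(\int|\widehat{\tilde\mu}|^{4})^{1/2}(\int|\widehat{\tilde\nu}|^{4})^{1/2}$ and estimate each fourth-power integral dyadically by combining the pointwise decay with the energy bound in shells $|\xi|\sim R$; this yields the clean symmetric threshold $\kappa_{2}>7/9$.

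The main obstacle is the bookkeeping: matching the exact numerical thresholds in the proposition requires a careful case-by-case choice of whether to bound $\kappa_{*}$ trivially by $1$ or to exploit AD-regularity to replace it by $\kappa_{2}$, and whether to use the asymmetric ``pointwise $+$ energy'' scheme or the symmetric ``Cauchy--Schwarz $+$ energy'' scheme so as to minimise the resulting constraint on $(\kappa_{2}(\mu),\kappa_{2}(\nu))$. No new analytic ingredients beyond Theorem~\ref{thm: image fourier decay} and Lemma~\ref{lma: correlationdim} are needed.
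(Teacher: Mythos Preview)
Your proposal is correct and follows essentially the same approach as the paper: pass to logarithmic coordinates, apply Theorem~\ref{thm: image fourier decay} (with the trivial bound $\kappa_*\le 1$, or $\kappa_*=\kappa_2$ under AD-regularity) to obtain pointwise decay of $\widehat{\tilde\mu}$ or $\widehat{\tilde\nu}$, and combine this with the energy characterisation of $\kappa_2$ (preserved under the diffeomorphism $\log$) via the condition $2\sigma+\kappa_2>1$. The only cosmetic difference is your treatment of the first bullet via Cauchy--Schwarz and fourth moments, whereas the paper simply observes that the symmetric threshold $7/9$ is the point at which the second bullet's asymmetric condition is satisfied with $\kappa_2(\mu)=\kappa_2(\nu)$; both routes yield the same inequality $9\kappa_2>7$.
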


The function $f\colon (x_1,\dotsc,x_k)\mapsto x_1\dotsb x_k$ is by no means the only function to which these methods can be applied. 
For an example of a different function, for fixed $(a,b)\in\mathbb{R}^2$ consider the radial projection $R_{(a,b)} \colon \RR^2 \setminus \{(a,b)\}$ given by 
\[ R_{(a,b)}\colon (x,y)\mapsto \frac{(x-a,y-b)}{|(x-a,y-b)|}. \] 
If $x>a$ and $y>b$, after some smooth identification using the arctan function, we can consider the map
\[
L_{(a,b)}(x,y)=\log (x-a)-\log (y-b).
\]
This map has Hessian matrix
\[
\begin{bmatrix}
    \frac{-1}{(x-a)^2} &  0\\
    0 & \frac{1}{(y-b)^2}
\end{bmatrix},
\]
so since $x>a$ and $y>b$, the graph of $L_{(a,b)}$ has non-vanishing Gaussian curvature. 
The following result follows by using the same arguments as in Theorems~\ref{thm: measure main} and~\ref{thm: enough variables mul}.

\begin{thm}\label{thm: radial} 
      Let $E,F$ be self-similar sets in $\RR$, and fix $(a,b)\in \RR^2$. 
      If 
      \[\Haus E \cdot \Haus F+\max\{1.5\Haus E+\Haus F,1.5\Haus E+\Haus F\}>2.5\] 
      (which holds if $\min\{\Haus E,\Haus F \} > (\sqrt{65}-5)/4$), then $R_{(a,b)}((E\times F) \setminus \{(a,b)\})$ has positive Lebesgue measure. 
\end{thm}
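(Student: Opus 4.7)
The plan is to adapt the strategy of Theorems~\ref{thm: measure main} and~\ref{thm: enough variables mul} to the map $L_{(a,b)}$. First I would reduce to a configuration where $L_{(a,b)}$ is smooth on the support. Since $(a,b) \notin E \times F$, the product set decomposes into (at most) four pieces according to the signs of $x-a$ and $y-b$; by the subadditivity of Lebesgue measure it suffices to treat a single open quadrant, and after possibly reflecting coordinates we may assume $E \subset (a,\infty)$ and $F \subset (b,\infty)$. On this region the radial projection, after identifying the relevant arc of $\mathbb{S}^1$ with an interval via the smooth diffeomorphism $t \mapsto \arctan(e^{-t})$ (which has nowhere-vanishing derivative), factors through $L_{(a,b)}$, so it suffices to prove that $L_{(a,b)}(E \times F)$ has positive Lebesgue measure.

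Next I would reduce from sets to measures as in the passage from Theorem~\ref{thm: measure main} to Theorem~\ref{thm: enough variables mul}: for each $\eps > 0$, place AD-regular self-similar measures $\mu$ on $E$ and $\nu$ on $F$ with $\kappa_2(\mu) > \Haus E - \eps$ and $\kappa_2(\nu) > \Haus F - \eps$, and aim to show that the pushforward $(\mu \times \nu)_{L_{(a,b)}}$ has an $L^2$ density (which forces its support to have positive Lebesgue measure). The key point is that $L_{(a,b)}$ is separable:
\[ L_{(a,b)}(x,y) = g(x) - h(y), \qquad g(x) = \log(x-a), \quad h(y) = \log(y-b), \]
so that writing $\tilde\mu = g_*\mu$ and $\tilde\nu = h_*\nu$, one has $(\mu \times \nu)_{L_{(a,b)}} = \tilde\mu * \check{\tilde\nu}$, where $\check{\tilde\nu}$ denotes the reflection of $\tilde\nu$. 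By Lemma~\ref{lma: abscts} it therefore suffices to prove
\[ \int_\RR |\hat{\tilde\mu}(\xi)|^2 \, |\hat{\tilde\nu}(\xi)|^2 \, d\xi < \infty. \]

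Since $g$ is $C^2$ on $E$ (as $a \notin E$ in the quadrant under consideration) with nonvanishing second derivative, Theorem~\ref{thm: image fourier decay} applied with $k=1$ gives, for AD-regular $\mu$,
\[ |\hat{\tilde\mu}(\xi)| \ll |\xi|^{-\sigma_\mu} \quad \text{for any} \quad \sigma_\mu < \frac{2\kappa_2(\mu) - 1}{2\kappa_2(\mu) + 3}. \]
Because $\kappa_2$ is preserved under smooth diffeomorphisms, $\kappa_2(\tilde\nu) = \kappa_2(\nu)$ and hence $\int_{|\xi| \leq R} |\hat{\tilde\nu}|^2 \ll R^{1 - \kappa_2(\nu) + \eps}$. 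A dyadic decomposition yields convergence of the above integral provided $2\sigma_\mu + \kappa_2(\nu) > 1$, which, after clearing denominators, becomes
\[ 2\kappa_2(\mu)\kappa_2(\nu) + 2\kappa_2(\mu) + 3\kappa_2(\nu) > 5. \]
Swapping the roles of $\mu$ and $\nu$ supplies the symmetric condition $2\kappa_2(\mu)\kappa_2(\nu) + 3\kappa_2(\mu) + 2\kappa_2(\nu) > 5$; taking the max and letting $\eps \to 0$ recovers the stated threshold $\Haus E \cdot \Haus F + \max\{1.5\Haus E + \Haus F, 1.5\Haus F + \Haus E\} > 2.5$.

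The main obstacle is handling the degenerate case where $a \in E$ (or $b \in F$), which is compatible with $(a,b) \notin E \times F$: then $g$ has a singularity on $\supp\mu$ and Theorem~\ref{thm: image fourier decay} does not apply directly. One must decompose $\mu$ into cylinder pieces of the defining IFS lying outside a small neighbourhood of $a$, obtain the Fourier decay bound on each piece with a controlled dependence on its scale, and sum the geometric series over scales approaching $a$. This bookkeeping does not affect the final dimensional threshold but is the principal technical nuisance in carrying out the argument in full generality.
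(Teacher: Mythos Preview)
Your core argument is correct and matches the paper's: factor $L_{(a,b)}$ as $g(x)-h(y)$, push AD-regular self-similar measures through $g$ and $h$, apply Theorem~\ref{thm: image fourier decay} to one factor and $\kappa_2$-invariance to the other, and check $\int|\widehat{\tilde\mu}|^2|\widehat{\tilde\nu}|^2<\infty$ via the same threshold computation as in Theorem~\ref{thm: measure main}.

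The difference is in how you handle the singularity when $a\in E$ (or $b\in F$), which you flag as the ``main obstacle.'' Your proposed fix---summing a geometric series of Fourier decay bounds over cylinder pieces approaching $a$---would work but is unnecessary. The paper's observation is simpler: since the goal is only \emph{positive Lebesgue measure} of the image (not absolute continuity of the full pushforward), it suffices to find a \emph{single} affine copy $E'\subset E$ with $a\notin E'$ and $\Haus E'=\Haus E$. Such a copy exists because a self-similar set with at least two maps (not sharing a common fixed point) always contains cylinder copies of itself bounded away from any prescribed point. Pass to such $E'\subset E$ and $F'\subset F$ with $E'\times F'$ disjoint from $\{x=a\}\cup\{y=b\}$; then the problem reduces to your non-degenerate case with no dimension loss, and the entire geometric-series bookkeeping is avoided. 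Note that this same inner-approximation step (Lemma~\ref{lma: innerapproximation}) is also what actually justifies your sentence ``place AD-regular self-similar measures $\mu$ on $E$,'' since without separation conditions $E$ need not carry an AD-regular measure of the correct dimension directly.
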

\begin{rem}
    The generalised Marstrand projection theorem tells us that the conclusion of Theorem~\ref{thm: radial} holds for Lebesgue almost all $(a,b)\in\mathbb{R}^2$. 
    Here, we took care of the exceptions. 
\end{rem}

Finally, the methods used in this section together with Theorem~\ref{thm: image fourier decay} can also be used to prove results about images of self-similar measures on $\RR^k$ for sufficiently large $k$. For example, the following statement holds. 
\begin{thm}\label{thm: higher to one}
    Fix any integer $k \geq 5$, and let $f\colon \mathbb{R}^k\to\mathbb{R}$ be smooth with non-zero Hessian determinant, e.g. $f(x_1,\dotsc,x_k) = x_1^2 + \dotsb + x_k^2$. 
    Let $\mu$ be an $s$-AD-regular non-expanding self-similar measure on $\mathbb{R}^k$ with 
    \[
    s > 2+\frac{k}{2}.
    \]
 Then $\mu_f$ has an $L^2$-density. 
 
 In particular, the image under $f$ of any non-expanding self-similar set $E\subset\mathbb{R}^k$ with $\Haus E>2+(k/2)$ has positive Lebesgue measure. 
\end{thm}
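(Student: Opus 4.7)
The plan is to deduce this theorem as a rather direct consequence of Theorem~\ref{thm: image fourier decay} (the $l^2$ branch of the bound), using the $L^2$ criterion for absolute continuity recorded in Lemma~\ref{lma: abscts}. First, since $\mu$ is AD-regular, Lemma~\ref{lma: l2dimHausdorff} gives $\kappa_2(\mu) = \kappa_*(\supp\mu) = \kappa_2$ (as both equal the AD-exponent), so the first exponent in Theorem~\ref{thm: image fourier decay} simplifies to
\[
\sigma \;=\; \frac{2\kappa_2-k}{4+2\kappa_2-k}.
\]
The graph of $f$ has non-vanishing Gaussian curvature on $\supp\mu$ by the hypothesis on the Hessian (the Gaussian curvature of a graph is proportional to the Hessian determinant, as noted at the end of Section~\ref{sec: pre}). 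Hence Theorem~\ref{thm: image fourier decay} applies and yields $|\widehat{\mu_f}(\xi)|\ll (1+|\xi|)^{-\sigma'}$ for any $\sigma'<\sigma$.

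Next, a direct algebraic check shows that $\sigma>1/2$ is equivalent to $2(2\kappa_2-k)>4+2\kappa_2-k$, i.e. to $\kappa_2>2+k/2$, which is precisely our hypothesis. (This is also where $k\ge 5$ is used: one needs $2+k/2<\kappa_2\le k$, forcing $k>4$.) Choosing some $\sigma'\in(1/2,\sigma)$, we get
\[
\int_{\RR} |\widehat{\mu_f}(\xi)|^2\, d\xi \;\ll\; \int_{\RR} (1+|\xi|)^{-2\sigma'}\,d\xi \;<\;\infty,
\]
since $2\sigma'>1$. Lemma~\ref{lma: abscts} then yields that $\mu_f$ is absolutely continuous with respect to Lebesgue measure with an $L^2$-density.

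For the statement about sets, suppose $E\subset\RR^k$ is a non-expanding self-similar set with $\Haus E>2+k/2$. The standard observation (as mentioned in the introduction to Section~\ref{sec: arithmetic}) is that $E$ supports an AD-regular self-similar measure with AD-exponent arbitrarily close to $\Haus E$: pass to a sub-IFS satisfying the strong separation condition with the same orthogonal parts (so that the non-expanding property is preserved), of similarity dimension arbitrarily close to $\Haus E$, and take the natural Hausdorff measure on its attractor. Applying the measure part of the theorem to such a $\mu$ with $\kappa_2(\mu)>2+k/2$ shows that $\mu_f$ is absolutely continuous. Since $\supp(\mu_f)\subset f(E)$ and an absolutely continuous probability measure must be supported on a set of positive Lebesgue measure, $f(E)$ has positive Lebesgue measure.

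The only genuinely non-trivial ingredient is Theorem~\ref{thm: image fourier decay}, which is already proved; the main obstacle here is purely bookkeeping, namely verifying the algebraic threshold $\sigma>1/2$ and constructing an AD-regular sub-measure for the set version. The threshold $\kappa_2>2+k/2$ arises exactly because the $l^2$ bound of Theorem~\ref{thm: image fourier decay} loses a factor of $\kappa_*$ (which we cannot avoid without further hypotheses), and the $L^2$ criterion in one dimension demands decay faster than $|\xi|^{-1/2}$; any improvement of either of these two ingredients (e.g. via the fractal uncertainty principle refinements or the $l^1$ branch of Theorem~\ref{thm: image fourier decay}) would immediately yield an improvement in the threshold.
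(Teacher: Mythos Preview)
Your argument is correct and follows essentially the same route as the paper: apply the $l^2$ branch of Theorem~\ref{thm: image fourier decay} with $\kappa_2=\kappa_*$ (by AD-regularity), verify that $\sigma>1/2$ is equivalent to $\kappa_2>2+k/2$, and invoke Lemma~\ref{lma: abscts}; for sets, pass to an AD-regular self-similar sub-measure. Your write-up is in fact somewhat cleaner than the paper's, which contains an unused line and a slightly mismatched description of the measure.
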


\subsection{Nonlinear arithmetic proofs}\label{ss:nonlinearproofs}
We begin by proving the results for measures. 
\begin{proof}[Proof of Theorem~\ref{thm: measure main}]
We first prove this in the special case where $\mu=\nu$. 
Consider the function $L\colon x\mapsto \log x$. 
Since $\mu$ is AD-regular, we can use Theorem~\ref{thm: image fourier decay} with $\kappa_* = \kappa_2 = d_{\infty}$ and obtain that
\[
|\widehat{\mu_L}(\xi)|\ll |\xi|^{-\sigma}
\]
for 
\[
\sigma=\frac{\kappa_2-1/2}{2+\kappa_2-1/2}.
\]
Again, because $\mu$ is AD-regular, we see that $\mu_L$ is AD-regular with the same exponent. Therefore by Lemma~\ref{lma: correlationdim}, 
\[
\int |\widehat{\mu_L}(\xi)|^2 \frac{1}{|\xi|^{\rho}}d\xi<\infty
\]
whenever $\rho>1-\kappa_2$. This implies that as long as
\begin{equation}\label{eqn:condition}
2\sigma+\kappa_2>1,
\end{equation}
if we fix $0 < \eta < 2 \sigma + \kappa_2 - 1$ then 
\begin{equation*}
\int |\widehat{\mu_L * \mu_L}|^2 |\xi|^{\eta} d\xi = \int |\widehat{\mu_L}|^4 |\xi|^{\eta} d\xi \leq \int |\widehat{\mu_L}|^2 |\xi|^{\eta - 2\sigma} d\xi < \infty.
\end{equation*}
Thus the measure $\mu_L * \mu_L$ has an $L^2$-density function with fractional derivatives in $L^2$. 
We can then apply the map $e\colon x\mapsto e^x$ to conclude the corresponding result for the pushforward of $\mu \times \mu$ under $f$. 
The threshold $(\sqrt{65}-5)/4$ sits precisely on the boundary of the condition~\eqref{eqn:condition}.

For asymmetric results (i.e. $\mu \neq \nu$), notice what we need is to replace~\eqref{eqn:condition} with 
\[
2\sigma(\mu)+\kappa_2(\nu)>1,
\]
and that $\kappa_2(\nu) > 1/2$. 
From here, we see that a sufficient condition is
\[
\kappa_2(\mu)\kappa_2(\nu)+\kappa_2(\mu)+1.5\kappa_2(\nu)-2.5>0.
\]
Of course, we can switch the roles of $\mu$ and $\nu$ to get another sufficient condition. 
This proves the two-sets part of Theorem~\ref{thm: measure main}. 

For the three-sets part, consider the convolution of three measures 
\[\chi \coloneqq \mu_L * \nu_L * \gamma_L. \]
    We can now consider its Fourier transform,
    \[
    \widehat{\chi}(\xi) = \widehat{\mu_L}(\xi)\widehat{\nu_L}(\xi)\widehat{\gamma_L}(\xi).
    \]
    We can use Cauchy--Schwarz for two of the three factors and then use the Fourier decay for the last factor\footnote{This three factors ``curse'' is prevalent in modern analytic/additive number theory. There, a lot of results are stated with three or more variables and it is extremely difficult to push them down to two variables, see \cite[Page~192, footnote]{TV}.} in the following integral for a large number $N>0$. 
    Indeed, our assumptions imply that $\kappa_2(\gamma) > 1/2$, so for all $\varepsilon > 0$, 
    \begin{align*}
    \int_{N/2\leq |\xi|\leq N} |\widehat{\chi}(\xi)|d\xi &\ll N^{\varepsilon/3 -\sigma}\left(\int_{|\xi|\leq N} |\widehat{\mu_L}(\xi)|^2d\xi \int_{|\xi|\leq N} |\widehat{\nu_L}(\xi)|^2d\xi\right)^{1/2}d\xi\\
    &\ll N^{\varepsilon-\sigma} N^{1-(\kappa_2 (\mu_L)+\kappa_2(\nu_L))/2},
    \end{align*}
    where $\sigma>0$ is related to $\kappa_2(\gamma)$ as in Theorem~\ref{thm: image fourier decay}, namely $\sigma = \frac{\kappa_2(\gamma) - 0.5}{\kappa_2(\gamma) + 1.5}$. 
    We have used the fact that
    \begin{align*}
    \int_{|\xi|\leq N} |\widehat{\mu_L}(\xi)|^2d\xi \ll N^{1-\kappa_2(\mu_L) + \varepsilon/3}
    \end{align*}
    (and similarly for $\nu$) by the definition of the $\kappa_2$ and because $\mu,\nu$ are AD-regular. 
    We see from above that as long as
    \begin{align}\label{con: three}
    \sigma+\frac{\kappa_2(\mu_L)+\kappa_2(\nu)}{2}>1,
    \end{align}
    we have
    \[
    \int_{\RR}|\widehat{\chi}(\xi)|d\xi<\infty,
    \]
    so by Lemma~\ref{lma: abscts}, $\chi$ has a continuous density. Therefore the same must be true for $\mu \cdot \nu \cdot \gamma$. 
    Finally, using the fact that $L$ is smooth, condition~\eqref{con: three} becomes~\eqref{e:threemeascondition}, as required. 
\end{proof}

\begin{proof}[Proof of Proposition~\ref{prop: escarith}]
We know that $\kappa_2$ is preserved under the diffeomorphism $L$. 
We can trivially bound $\kappa_*\leq 1$ to get a Fourier decay exponent $\sigma = (2\kappa_2(\mu)-1)/5$
from Theorem~\ref{thm: image fourier decay}. Now we follow the proof of Theorem~\ref{thm: measure main}. 
Condition~\eqref{eqn:condition} is 
\[
\frac{2(2\kappa_2(\mu) - 1)}{5} + \kappa_2(\nu) > 1, 
\]
which becomes $4\kappa_2(\mu) + 5\kappa_2(\nu) > 7$. 
We can of course interchange the roles of $\mu,\nu$. This gives the second bullet point. 
We see that this condition is satisfied if $\min\{\kappa_2(\mu),\kappa_2(\nu)\} > 7/9$. 

Finally, if $\nu$ is AD-regular, then we know $\kappa_2(\nu) > 1/2$ and we have $\sigma = \frac{\kappa_2(\nu) - 1/2}{\kappa_2(\nu) + 3/2}$ for $\nu$, so $2\sigma + \kappa_2(\mu)>1$ rearranges to the desired condition. 
\end{proof}

We can use the results for measures to prove the results for sets. 
We use the following standard lemma. 
\begin{lma}\label{lma: innerapproximation}
    Let $F$ be a self-similar set on $\RR$. 
    Then for all $\varepsilon > 0$ there exists a homogeneous self-similar set $F' \subset F$ with the SSC, such that either $F \subset (-\infty,0)$ or $F \subset (0,\infty)$, and such that 
    \[
    \Haus F'\geq \Haus F-\epsilon.
    \]
\end{lma}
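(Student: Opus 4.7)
The plan is in two stages: first reduce to the case $F \subset (0,\infty)$, then construct inside $F$ a homogeneous self-similar subset satisfying the SSC with Hausdorff dimension at least $\Haus F - \epsilon$.

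For the reduction, since the IFS defining $F$ has contractions with no common fixed point, $F$ is uncountable, so for $n$ large some iterated cylinder image $f_\omega(F)$ with $\omega \in \{1,\dotsc,N\}^n$ has diameter small enough to lie entirely in one of the two half-lines $(-\infty,0)$ or $(0,\infty)$. Such a cylinder image is itself the attractor of the conjugated IFS $\{f_\omega \circ f_i \circ f_\omega^{-1}\}_i$, hence a self-similar set with the same Hausdorff dimension as $F$. Replacing $F$ by this copy, we may assume $F \subset (0,\infty)$ at no cost in dimension.

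For the main construction, write $s = \Haus F$ and fix $\eta \in (0, \epsilon/3)$. By Falconer's implicit theorem (Section~\ref{ss:dims}) we have $\dim_{\mathrm B} F = s$, so for all sufficiently small $\delta > 0$ there exist $M \gg \delta^{-(s-\eta)}$ pairwise $\delta$-separated points of $F$. Consider the Moran cut-set $\Lambda_\delta$ of shortest cylinder words $\omega$ with $r_\omega\,\mathrm{diam}(F) \le \delta$: every $\omega \in \Lambda_\delta$ satisfies $r_\omega \asymp \delta$, and the images $\{f_\omega(F) : \omega \in \Lambda_\delta\}$ form a cover of $F$ by sets of diameter at most $\delta$. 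For each of the $M$ separated points $x_j$, pick some $\omega_j \in \Lambda_\delta$ with $x_j \in f_{\omega_j}(F)$; since the $x_j$'s are $\delta$-separated and the cylinder diameters are at most $\delta$, the selected cylinder images $\{f_{\omega_j}(F)\}_{j=1}^M$ are pairwise disjoint. Each $\omega_j$ has a \emph{type} $(k_1,\dotsc,k_N)$ counting how many times each digit appears, which determines both $r_{\omega_j}=\prod r_i^{k_i}$ and the orientation $\prod O_i^{k_i}$. The lengths $|\omega_j|$ lie in a range of size $O(\log(1/\delta))$, so the number of possible types is polynomial in $\log(1/\delta)$; by pigeonhole a sub-collection $\mathcal{W} \subset \{\omega_1,\dotsc,\omega_M\}$ of size $\gg \delta^{-(s-2\eta)}$ shares a single type, hence a common ratio $r \asymp \delta$ and common orientation. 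Then $\{f_\omega : \omega \in \mathcal{W}\}$ is a homogeneous self-similar IFS whose cylinder images lie in $F$ and are pairwise disjoint, so it satisfies the SSC. Its attractor $F' \subseteq F$ is a homogeneous self-similar set, and because SSC holds its Hausdorff dimension equals the similarity dimension $\log|\mathcal{W}|/\log(1/r) \ge s - 3\eta \ge s-\epsilon$.

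The main technical obstacle is achieving simultaneously (i) a common contraction ratio (homogeneity), (ii) pairwise disjoint images (SSC), and (iii) Hausdorff dimension at least $s - \epsilon$. Naively pigeonholing types at a fixed iteration level $n$ gives (i), and picking one cylinder per $\delta$-separated ball gives (ii), but the common ratio could then be exponentially smaller than the cover scale $\delta$, causing a catastrophic loss in (iii). The key device is to replace the fixed level $n$ by the Moran cut-set $\Lambda_\delta$: this pins the common ratio to be $\asymp \delta$ while keeping the number of possible types polylogarithmic in $1/\delta$, so that the pigeonhole loss is absorbed into $\eta$ and (iii) is preserved.
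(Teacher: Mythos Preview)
Your argument is correct in spirit and follows the same overall strategy as the paper (iterate the IFS, pigeonhole on the ``type'' of a word to extract a homogeneous sub-IFS with SSC), but you have been considerably more careful than the paper, whose proof simply selects a maximal suitable sub-IFS of $\Lambda^N$ for large $N$ and asserts without further detail that the dimension is close enough. Your use of the Moran cut-set $\Lambda_\delta$ in place of a fixed iteration level is a genuine refinement: it forces the common ratio $r$ of the selected type to satisfy $r\asymp\delta$, which is precisely what is needed to lower-bound the similarity dimension $\log|\mathcal W|/\log(1/r)$. Your final paragraph correctly identifies why the naive fixed-level version is not obviously adequate (the pigeonholed type at level $N$ could have ratio as small as $r_{\min}^N$ while the separation scale is $r_{\max}^N$, giving only $(s-\eta)\log r_{\max}/\log r_{\min}$ for the dimension).

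One small gap: the assertion ``since the $x_j$'s are $\delta$-separated and the cylinder diameters are at most $\delta$, the selected cylinder images $\{f_{\omega_j}(F)\}$ are pairwise disjoint'' is not quite right as stated, because two intervals of length $\le\delta$ containing points at distance exactly $\delta$ can still overlap. Taking the $x_j$ to be $3\delta$-separated instead (which costs only a constant factor in $M$, harmlessly absorbed into $\eta$) repairs this and yields genuinely disjoint closed cylinder images, hence the SSC for the sub-IFS $\{f_\omega:\omega\in\mathcal W\}$.
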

\begin{proof}
    Let $\Lambda$ be a self-similar IFS generating $F$. 
    We choose $N \in \NN$ large and select a sub-IFS of the iterated system $\Lambda^N$ which satisfies the SSC, is homogeneous, maps $F$ into either $(-\infty,0)$ or $(0,\infty)$, and has attractor $F'$ with the largest possible dimension subject to these constraints. 
    It is not difficult to see that if we had chosen $N$ large enough then $\Haus F'\geq \Haus F-\epsilon$. 
\end{proof}

\begin{proof}[Proof of Theorem~\ref{thm: enough variables mul}]
    First extract self-similar subsets as in Lemma~\ref{lma: innerapproximation}, and with $\varepsilon$ small enough so that the subsets satisfy the same dimension constraints as the original sets. 
    Without loss of generality we may assume all subsets are contained in $(0,\infty)$. 
    Since the subsets are homogeneous with the SSC, the uniform self-similar measures supported on them are AD-regular with the same dimension. 
    Applying Theorem~\ref{thm: measure main} to these subsets completes the proof, noting that the support of an absolutely continuous measure has positive Lebesgue measure, and if the density is continuous then the support has non-empty interior. 
\end{proof}

\begin{proof}[Proof of Theorem~\ref{thm: radial}]
    First note that $E$ and $F$ contain affine copies $E', F'$ of themselves such that $E' \times F' \subseteq (E \times F) \setminus (\{x = a\} \cup \{y=b\})$. 
    Note that $\dim_{\mathrm H} E' = \dim_{\mathrm H} E$ and $\dim_{\mathrm H} F' = \dim_{\mathrm H} F$. 
    We may assume without loss of generality that $E' \times F' \subset (a,\infty) \times (b,\infty)$. 
    Now one can show that the Lebesgue measure of $\pi_{(a,b)}(E' \times F')$ is positive as in the above proofs, by considering the convolution of the image of an AD-regular self-similar measure on $E'$ under $x\mapsto \log(x-a)$ with the image of an AD-regular self-similar measure on $F'$ under $y \mapsto -\log (b-y)$, both having $\kappa_2 > 1/2$. 
    We omit the details. 
\end{proof}

\begin{proof}[Proof of Theorem~\ref{thm: higher to one}]
Since $s > 2+(k/2)$, we can fix $\sigma$ such that 
\[ \frac{1}{2} < \sigma < \frac{2s-k}{4+2s-k}. \] 
Noting from Lemma~\ref{lma: l2dimHausdorff} that $\kappa_2(\mu) = s$, by Theorem~\ref{thm: image fourier decay} we have $|\widehat{\mu_f}(\xi)| \ll |\xi|^{-\sigma}$, so $\int_{\RR} |\widehat{\mu_f}(\xi)|^2 d\xi < \infty$. 
Therefore by Lemma~\ref{lma: abscts}, $\mu_f$ has an $L^2$-density. 

For the conclusion regarding self-similar sets, it is enough to find an $s$-AD-regular self-similar measure on $E$ with $s > 2+(k/2)$, as in Lemma~\ref{lma: innerapproximation}. 
\end{proof}

\section{Further developments and open questions}\label{sec: further dev}

\subsection{Fractal uncertainty principle}\label{ss:fup}

To prove the $l^2$ estimate for Theorem~\ref{thm: image fourier decay} when $\mu$ is homogeneous, after applying Cauchy--Schwarz, equation~\eqref{i:useclaim} gave that for all $\varepsilon > 0$, 
\begin{align}\label{e: aftercauchyschwarz}
|\widehat{\mu_f}(\xi)| \ll &2^{(-\kappa_2' + (2-\gamma)\kappa_*' + \varepsilon k )n/2} \left( \int_{\mbox{dist}(\bxi',2^{-n}|\bxi| P(\supp(\mu))) \leq 2^{\varepsilon n}} |\widehat{\mu}(\bxi')|^2 d\bxi' \right)^{1/2}\\& + 2^{-(2- \gamma)n} \notag
\end{align}
We then crudely bounded the integral in terms of $\int_{B_{C|\bxi|/2^n}(\mathbf{0})} |\widehat{\mu}(\bxi')|^2 d\bxi'$ for a constant $C$. 
If we impose some additional porosity assumptions on $\mu$, however, then one can use the fractal uncertainty principle (FUP) to improve the exponent of decay. 
Indeed, in~\eqref{e: aftercauchyschwarz} the integral of the square of the Fourier transform of the fractal measure $\mu$ is taken over a neighbourhood of another fractal set, namely some distorted and scaled-up copy of $\supp(\mu)$. 
The FUP says, roughly speaking, that no function can be localised in both position and frequency near a fractal set. 
The goal of this section is not to give the most general possible statements; rather, we hope to illustrate how FUP can be used as a tool in this context.

We now formulate the FUP more precisely; a good survey of the FUP is given in~\cite{DyatlovFUPsurvey}. 
For $h \in (0,\infty)$ we say that an $h$-dependent family of compact subsets $X_h$ and $Y_h$ of $\RR^k$ satisfy an FUP with exponent $\beta \geq 0$ and constant $C \geq 0$ if for all $f \in L^2(\RR^k)$, 
\begin{equation}\label{e: fupdefine}
\{ \bxi \in \RR^k : \widehat{f}(\bxi) \neq 0 \} \subseteq Y_h \qquad \Rightarrow \qquad ||f||_{L^2(X_h)} \leq C h^{\beta} ||f||_{L^2(\RR^k)}.
\end{equation}
Here, the $L^2$ norms are always taken with respect to $k$-dimensional Lebesgue measure, and crucially, $\beta$ and $C$ must be independent of $h$ and $f$. 
If we fix $\varepsilon > 0$ and compact sets $X,Y \subset \RR^k$ and take $Y_h$ to be the $1$-neighbourhood of $h^{-1}Y \coloneqq \{ h^{-1} \by : \by \in Y\}$ and $X_h$ to be the $h$-neighbourhood of $X$, then a straightforward volume argument shows that one can take 
\[
\beta = \max\Big\{\frac{1}{2}(k - \overline{\dim}_{\mathrm B} X - \overline{\dim}_{\mathrm B} Y - \varepsilon),0\Big\}; 
\]
one is interested in when $\beta$ can be larger than this. 

The following lemma translates the FUP into a form where it can be applied to~\eqref{e: aftercauchyschwarz}. 
We work in the setting of Lemma~\ref{lma: homogquant} and let $P$ be the map from the proof of Claim~\ref{claim: assouad}. 
\begin{lma}\label{lma: fup}
    Fix $\varepsilon > 0$. Assume that for $h \in (0,1)$, the sets $X_h = (- h^{\varepsilon} P(\supp(\mu)))^{h}$ and $Y_h = (h^{-1}\supp(\mu))^1$ satisfy a FUP with exponent $\beta > 0$. 
    Then 
    \begin{align*}
    \int_{(h^{-1/(1+\varepsilon)}P(\supp(\mu)))^{h^{-\varepsilon}}} |\widehat{\mu}(\bxi')|^2 d\bxi' &\ll h^{2\beta} \int_{|\bxi'| \ll h^{-(1+\varepsilon)}} |\widehat{\mu}(\bxi')|^2 d\bxi' \\
    &\ll h^{2\beta - (1+2\varepsilon)(k-\kappa_2)} .
    \end{align*}
\end{lma}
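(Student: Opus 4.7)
The second inequality is a direct consequence of the definition of $\kappa_2$. By Lemma~\ref{lma: correlationdim}, for every $\varepsilon' > 0$ one has $\int_{|\bxi'|\leq R}|\widehat{\mu}(\bxi')|^2\,d\bxi' \ll R^{k-\kappa_2+\varepsilon'}$; taking $R \asymp h^{-(1+\varepsilon)}$ and choosing $\varepsilon'$ small enough (depending on $\varepsilon$ and $k-\kappa_2$) yields $h^{-(1+2\varepsilon)(k-\kappa_2)}$ after multiplication by $h^{2\beta}$.

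For the first inequality, I would apply the FUP hypothesis at an auxiliary scale $h_* \asymp h^{1+\varepsilon}$, chosen so that $h_*^{-1}$ matches the right-hand ball radius. By Plancherel duality applied to~\eqref{e: fupdefine}, the FUP is equivalent to the statement that for every $g \in L^2(\RR^k)$ supported in $Y_{h_*}$, $\int_{X_{h_*}}|\widehat{g}|^2 \leq C^2 h_*^{2\beta}\|g\|_2^2$. Fix a Schwartz bump $\rho$ with $\widehat{\rho}$ non-negative, compactly supported in a small neighbourhood of $\bzero$, and bounded below by a positive constant on a smaller neighbourhood. Apply the dual FUP to the test function $g := T_{h_*^{-1}}\mu * \rho$, where $T_{h_*^{-1}}\mu$ is the pushforward of $\mu$ under $\bx \mapsto h_*^{-1}\bx$; this $g$ is supported in $(h_*^{-1}\supp\mu)^1 = Y_{h_*}$, and $\widehat{g}(\bxi) = \widehat{\mu}(h_*^{-1}\bxi)\widehat{\rho}(\bxi)$. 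Changing variables $\bxi = h_*\bxi'$ on both sides, and using Plancherel on $\|g\|_2^2 = \|\widehat{g}\|_2^2$ together with the compact support of $\widehat{\rho}$, transforms the FUP bound into
\[
\int_{h_*^{-1}X_{h_*}}|\widehat{\mu}(\bxi')|^2\,d\bxi' \;\ll\; h_*^{2\beta}\int_{|\bxi'|\ll h_*^{-1}}|\widehat{\mu}(\bxi')|^2\,d\bxi',
\]
using the lower bound on $\widehat{\rho}(h_*\bxi')$ near $\bzero$ (applicable because $\bxi' \in h_*^{-1}X_{h_*}$ implies $|h_*\bxi'| \ll 1$ for small $h$). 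Since $\beta > 0$ and $h_* \leq h$, we have $h_*^{2\beta} \leq h^{2\beta}$, and $h_*^{-1} \asymp h^{-(1+\varepsilon)}$, matching the right-hand side of the lemma.

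The main obstacle is the geometric matching of the rescaled set $h_*^{-1}X_{h_*}$ with the target region $R := (h^{-1/(1+\varepsilon)}P(\supp(\mu)))^{h^{-\varepsilon}}$: the two have slightly different aspect ratios (``neighbourhood thickness'' relative to ``fractal diameter''), so the containment $R \subseteq h_*^{-1}X_{h_*}$ does not hold directly for a single choice of $h_*$. To handle this, I would either enlarge $X_{h_*}$ slightly (absorbing the loss into the FUP constant) or cover $R$ by $O_{\varepsilon}(1)$ translates of $h_*^{-1}X_{h_*}$, using the spatial translation-invariance of the FUP (modulating $g$ by $e^{-2\pi i \bx \cdot \bxi_0}$ shifts $\widehat{g}$ by $\bxi_0$ without changing $\|g\|_2$ or the support condition). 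Either route absorbs the small exponent losses into the slack between $h_*^{2\beta}$ and $h^{2\beta}$, preserving the claimed $h^{2\beta}$ factor.
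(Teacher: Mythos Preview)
Your overall strategy---mollify $\mu$, apply the FUP to the mollified function, then rescale---is the same as the paper's, and your handling of the second inequality is correct. However, there is a genuine gap in your choice of mollifier. You take $\rho$ with $\widehat{\rho}$ compactly supported near $\bzero$, and then assert that $g = T_{h_*^{-1}}\mu * \rho$ is supported in $(h_*^{-1}\supp\mu)^1 = Y_{h_*}$. This is false: if $\widehat{\rho}$ has compact support then $\rho$ is real-analytic and cannot be compactly supported (unless $\rho \equiv 0$), so the convolution $g$ is not supported in any bounded neighbourhood of $h_*^{-1}\supp\mu$. The FUP hypothesis~\eqref{e: fupdefine} therefore does not apply to your $g$.

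The fix is to reverse the roles: take $\rho$ (call it $\phi$) smooth with compact support in $B_1(\bzero)$, so that the support condition $\supp g \subseteq Y_{h_*}$ holds exactly, and accept that $\widehat{\phi}$ is only Schwartz rather than compactly supported. This is precisely what the paper does (at scale $h$ rather than your $h_* = h^{1+\varepsilon}$, which is a harmless difference). The price is that after rescaling, the right-hand side becomes $\|\widehat{\phi_h}\cdot\widehat{\mu}\|_{L^2(\RR^k)}$ rather than an integral over a compact ball; the paper then splits this as
\[
\|\widehat{\phi_h}\cdot\widehat{\mu}\|_{L^2(\RR^k)} \ll \|\widehat{\phi_h}\|_{L^2(|\bxi|\geq h^{-(1+\varepsilon)})} + \|\widehat{\mu}\|_{L^2(|\bxi|\leq h^{-(1+\varepsilon)})},
\]
using $|\widehat{\mu}|\leq 1$ on the tail and $|\widehat{\phi_h}|\ll 1$ on the ball, and kills the tail term by the rapid decay of $\widehat{\phi}$. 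This replaces your (impossible) hard Fourier cutoff with a soft one. Your final paragraph on geometric matching via $O_\varepsilon(1)$ translates is reasonable in spirit, though the paper sidesteps this by working directly at scale $h$ and identifying the integration region with $-X_h/h$.
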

\begin{proof}
    The final inequality is from the definition of $\kappa_2$, so we prove the first inequality. 
    Let $\phi \colon \RR^k \to \RR$ be a positive-valued smooth function with compact support in $B_1(\mathbf{0})$ and $\int_{\RR^k} \phi(\bx) d\bx = 1$. 
    For $\lambda \in (0,1]$ let $\phi_{\lambda}(\bx) = \lambda^{-k} \phi(\lambda^{-1} \bx)$, noting that $\int_{\RR^k} \phi_{\lambda}(\bx) d\bx = 1$ by the change of variable formula. 
    By Fubini's theorem, $\phi_{\lambda} * \mu$ is a measure supported in $(\supp(\mu))^{\lambda}$ with $||\phi_{\lambda} * \mu||_1 \leq ||\phi_{\lambda}||_1 \mu(\RR^k) = 1$. 
    Let $S_{\lambda}$ be the scaling map defined by $S_{\lambda}(\psi)(\bx) = \lambda^{-k/2} \psi(\bx / \lambda)$. 

    Now, $\supp(S_{1/h} (\phi_{h} * \mu)) \subseteq Y_h$. 
    Therefore the FUP gives 
    \[
    || \widecheck{S_{1/h} (\phi_{h} * \mu)} ||_{L^2(X_h)} \ll h^{\beta} ||  \widecheck{S_{1/h} (\phi_{h} * \mu)}  ||_{L^2(\RR^k)},
    \]
    where $\widecheck{f}(\bx) = \widehat{f}(-\bx)$ is the inverse Fourier transform of an $L^2$ function $f$. 
    By scaling properties of Fourier transforms, this implies 
    \[
    ||S_{1/h} (\widehat{\phi_{h}} \cdot \widehat{\mu}) ||_{L^2(-X_h)} \ll h^{\beta} |S_{1/h} (\widehat{\phi_{h}} \cdot \widehat{\mu}) ||_{L^2(\RR^k)}.
    \]
    By the change of variables formula, $S_{\lambda}$ preserves the $L^2$ norm, so 
    \[
    ||\widehat{\phi_{h}} \cdot \widehat{\mu}||_{L^2(-X_h/h)} \ll h^{\beta} ||\widehat{\phi_{h}} \cdot \widehat{\mu}||_{L^2(\RR^k)}.
    \]
    But $|\widehat{\phi_h}(\bxi)| \asymp 1$ for $|\bxi| \leq h^{-1/(1+0.1\varepsilon)}$ so $||\widehat{\phi_{h}} \cdot \widehat{\mu}||_{L^2(-X_h/h)} \asymp ||\widehat{\mu}||_{L^2(-X_h/h)}$. 
    Moreover, $\widehat{\phi_{h}}(\bxi) = \widehat{\phi}(h \bxi)$, so since $\widehat{\phi}$ is a Schwartz function decaying faster than any polynomial, for all $L > 0$ we have $||\widehat{\phi_{h}}||_{L^2(|\bxi| \geq h^{-(1+\varepsilon)})} \ll h^{\varepsilon L}$. 
    Therefore setting $L=100k/\varepsilon$, 
    \begin{align*}
    ||\widehat{\mu}||_{L^2(-X_h/h)} &\ll h^{\beta} ||\widehat{\phi_{h}} \cdot \widehat{\mu}||_{L^2(\RR^k)} \\
    &\ll h^\beta ( ||\widehat{\phi_{h}}||_{L^2(\RR^k \setminus B_{h^{-(1+\varepsilon)}}(\mathbf{0}))}  +  ||\widehat{\mu}||_{L_2(B_{h^{-(1+\varepsilon)}}(\mathbf{0}))})   \\
    &\ll h^{\beta + 100k} + ||\widehat{\mu}||_{L_2(B_{h^{-(1+\varepsilon)}}(\mathbf{0})})) \\
    &\ll ||\widehat{\mu}||_{L_2(B_{h^{-(1+\varepsilon)}}(\mathbf{0}))}.
    \end{align*}
    Squaring both sides gives the desired inequality. 
\end{proof}

We can use a FUP which was recently proved by Cohen~\cite{CohenFUPannals} to get an improved exponent. 
A compact set $F \subset \RR^k$ (or a measure whose support is $F$) is called porous from scales $\alpha_0$ to $\alpha_1$ if there exists some $\nu \in (0,1/3)$ such that for all balls of radius $R \in (\alpha_0,\alpha_1)$ there is some $\bx \in B$ such that $B_{\nu R}(\bx) \cap F = \varnothing$. 
It is called line-porous from scales $\alpha_0$ to $\alpha_1$ if for some $\nu \in (0,1/3)$, for all line segments $\tau$ of length $R \in (\alpha_0,\alpha_1)$, there is some $\bx \in \tau$ such that $B_{\nu R}(\bx) \cap F = \varnothing$. 
We say a measure is porous (respectively line-porous) from scales $\alpha_0$ to $\alpha_1$ if its support is porous (resp. line-porous) from scales $\alpha_0$ to $\alpha_1$. 
Line-porosity is a strictly stronger condition than porosity when $k \geq 2$; for example, the Sierpi\'nski carpet is porous from scales $0$ to $1$ but not line-porous. 
We will present the proofs of the following results assuming $\mu$ is homogeneous for simplicity, but the extension to the non-expanding case is very similar to Section~\ref{ss: nonhomog}. 
\begin{thm}\label{thm: largedimfup}
    Let $\mu$ be a non-expanding self-similar measure on $\RR^k$ which is line-porous from scales $0$ to $1$ and satisfies $\kappa_2 > k/2$. 
    Let $f \colon \RR^k \to \RR$ be a $C^2$ map whose graph has non-vanishing Gaussian curvature over $\supp(\mu)$. 
    Then there exists $\upsilon > 0$ such that $|\widehat{\mu_f}(\xi)|\ll |\xi|^{-\sigma}$ with 
    \[ \sigma \coloneqq \frac{2(2\kappa_2 - k)}{4 + 2\kappa_* - k} + \upsilon. \] 
\end{thm}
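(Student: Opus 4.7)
The plan is to follow the $p = 2$ case of the proof of Lemma~\ref{lma: homogquant}, adapted to non-expanding self-similar measures as in Section~\ref{ss: nonhomog}, but replacing the crude $l^2$-dimension bound with a sharper one from a fractal uncertainty principle. More precisely, inequality~\eqref{e: aftercauchyschwarz} controls $|\widehat{\mu_f}(\xi)|$ in terms of an $L^2$-integral of $\widehat{\mu}$ over a $2^{\varepsilon n}$-neighbourhood of the dilated set $(|\xi|/2^n) \cdot P(\supp(\mu))$, where $P$ is the local $C^1$ diffeomorphism from the proof of Claim~\ref{claim: assouad}. In the original proof this integral is bounded by enlarging it to the ball $B_{C|\xi|/2^n}(\mathbf{0})$ and invoking the $l^2$-dimension $\kappa_2$; with Lemma~\ref{lma: fup} we can do strictly better, gaining an extra factor of $h^{2\beta}$ provided the pair $X_h, Y_h$ satisfies an FUP with exponent $\beta > 0$, where $h \asymp 2^{-(\gamma-1)n}$ is the inverse scale at which the integration takes place.

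To produce such a $\beta > 0$, I would invoke Cohen's higher-dimensional FUP~\cite{CohenFUPannals}, which applies to pairs of line-porous sets. The set $Y_h = (h^{-1}\supp(\mu))^1$ inherits line-porosity at all relevant scales directly from the hypothesis that $\supp(\mu)$ is line-porous from scales $0$ to $1$, since rescaling by $h^{-1}$ and taking a unit thickening preserves the porosity constant up to a bounded loss. For $X_h = (-h^{\varepsilon} P(\supp(\mu)))^{h}$ one uses that $P$ is a $C^1$ diffeomorphism whose Jacobian is bounded away from zero on the compact set $\supp(\mu)$; hence $P$ distorts line segments only by a bounded multiplicative amount, so $P(\supp(\mu))$ is also line-porous (with a slightly degraded constant) at all scales up to some fixed $c > 0$ depending only on $f$ and $\mu$. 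Cohen's theorem then yields some $\beta > 0$ depending only on $\mu$ and $f$, valid uniformly over $h \in (0,1)$.

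Substituting the improved FUP bound into the Cauchy--Schwarz step of Lemma~\ref{lma: homogquant} and re-optimising the parameter $\gamma \in (1,2)$, a direct algebraic calculation shows that the new optimal decay exponent strictly exceeds the $l^2$ baseline from Theorem~\ref{thm: image fourier decay} by some $\upsilon > 0$ depending only on $\beta, \kappa_2, \kappa_*$ and $k$. The main technical obstacle will be verifying the uniformity of the porosity constants (and hence of $\beta$) across the necessary range of parameters: as the centre $\bx_0$ of the dyadic cube $D_n$ varies within $\supp(\mu)$ and as the sign of $\xi$ changes, the local diffeomorphism $P$ varies, and one must confirm that the line-porosity constant for $P(\supp(\mu))$ degrades only by a bounded multiplicative factor uniformly in these parameters. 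Compactness of $\supp(\mu)$ together with the continuous dependence of the Jacobian of $P$ on $\bx_0$ makes this achievable; the non-expanding adaptation of Section~\ref{ss: nonhomog} requires no further porosity bookkeeping since line-porosity is invariant under orthogonal transformations, while the sub-exponentially many orientations contribute only a $2^{\varepsilon n}$ overhead that is absorbed by any strictly positive polynomial gain.
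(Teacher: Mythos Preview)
Your approach is essentially the paper's: start from \eqref{e: aftercauchyschwarz}, feed it into Lemma~\ref{lma: fup}, invoke Cohen's FUP to obtain an exponent $\beta > 0$, and re-optimise $\gamma$. The one substantive difference is in how you verify the hypotheses of Cohen's theorem for $X_h$. You claim $P(\supp(\mu))$ is line-porous because the diffeomorphism $P$ ``distorts line segments only by a bounded multiplicative amount''; but a $C^1$ diffeomorphism sends line segments to curves, not line segments, so this justification as written is incorrect. (The conclusion is in fact salvageable at small scales via a second-order argument---$P^{-1}$ of a segment of length $R$ lies within $O(R^2)$ of a straight segment---but that is not what you wrote, and it is more work than necessary.) The paper sidesteps this entirely: Cohen's Theorem~1.1 only requires $Y_h$ to be line-porous and $X_h$ to be merely \emph{porous}, and ordinary porosity is immediately preserved under local bi-Lipschitz maps, so the paper simply observes that $P(\supp(\mu))$ is covered by finitely many diffeomorphic copies of $\supp(\mu)$. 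Your overcomplication is therefore harmless once you realise line-porosity of $X_h$ is not needed, but the reasoning you give for it should be either replaced by the second-order argument or, more simply, dropped in favour of the weaker porosity claim.
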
 
\begin{proof}
    Let $X_h,Y_h$ be as in Lemma~\ref{lma: fup}. 
    Note that $Y_h$ is line-porous from scales $1$ to $h^{-1}$. 
    Moreover, $P(\supp(\mu))$ can be covered by a finite union of diffeomorphic copies of $\supp(\mu)$ by compactness, so $X_h$ is porous from scales $h$ to $1$. 
    Therefore by \cite[Theorem~1.1]{CohenFUPannals}, $X_h$ and $Y_h$ satisfy a FUP with some exponent $\beta > 0$. 
    Applying Lemma~\ref{lma: fup} with $h = (|\bxi|/2^n)^{-(1+\varepsilon)} = 2^{-(1+\varepsilon)(\gamma - 1)}$ to~\eqref{e: aftercauchyschwarz}, we see that for all $\varepsilon > 0$, 
    \[
    |\widehat{\mu_f}(\xi)| \ll 2^{(-d_2' + (2-\gamma)\kappa_*' + \varepsilon k - 2(\gamma - 1)(1+\varepsilon) \beta +  2(\gamma - 1)(k-\kappa_2)(1+\varepsilon) (1+2\varepsilon) )n/2} + 2^{-(2- \gamma)n}. 
    \]
    Making $\varepsilon > 0$ very small compared to $\beta$, we see that the value of $\gamma$ for which the two expressions are equal is strictly smaller than in the proof of Theorem~\ref{thm: image fourier decay} (the size of the improvement depends on $\beta$). 
    We can therefore make $\upsilon$ small enough in terms of $\beta$ so that the desired Fourier decay holds. 
\end{proof}

In this $k=1$ case there is a nice characterisation of line-porosity. 
\begin{prop}[\cite{Luukkainen1998,FraserAssouadSelfSim}]
    Let $F$ be a self-similar set in $\RR$. The following are equivalent: 
    \begin{enumerate}
        \item\label{i:lpor} $F$ is line-porous,
        \item\label{i:porous} $F$ is porous,
        \item\label{i:assdim} $\dim_{\mathrm A} F < 1$,
        \item\label{i:wsc} $\dim_{\mathrm H} F < 1$ and $F$ satisfies the weak separation condition. 
    \end{enumerate}
\end{prop}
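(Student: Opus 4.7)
The plan is to establish the equivalences by cycling through them, citing the two references for the nontrivial parts. The implication \eqref{i:lpor}$\Rightarrow$\eqref{i:porous} is immediate from the definitions (line-porosity implies porosity on any set). The reverse \eqref{i:porous}$\Rightarrow$\eqref{i:lpor} is the first place where the ambient dimension $k=1$ plays a role: in $\mathbb{R}$ every ball is a line segment and every line segment is a ball, so the two notions coincide trivially. Hence \eqref{i:lpor}$\Leftrightarrow$\eqref{i:porous}.

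Next I would prove \eqref{i:porous}$\Leftrightarrow$\eqref{i:assdim}. One direction is soft: if $\dim_{\mathrm A} F < 1$, then by the definition of Assouad dimension there exist $\alpha < 1$ and $C>0$ such that $N_r(B_R(x)\cap F) \leq C(R/r)^\alpha$ for all $0<r<R$ and $x \in F$. Choosing $r$ slightly smaller than $R$ forces $F\cap B_R(x)$ to miss some sub-interval of comparable size, which gives porosity from all scales. Conversely, if $F$ is porous, then at every location and scale a definite proportion of the ball is free of $F$; iterating this geometric removal over $n$ scales gives a covering bound of the form $N_r(B_R(x) \cap F) \leq C(R/r)^{1-\eta}$ for some $\eta>0$ depending only on the porosity constant, so $\dim_{\mathrm A} F \le 1-\eta < 1$. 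This equivalence for subsets of $\mathbb{R}$ is classical and is the content of the cited paper of Luukkainen.

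The main obstacle is \eqref{i:assdim}$\Leftrightarrow$\eqref{i:wsc}, and here I would invoke Fraser's dichotomy for self-similar sets (see \cite{FraserAssouadSelfSim} and also \cite[Chapter~7]{Fraser2020book}). The direction \eqref{i:wsc}$\Rightarrow$\eqref{i:assdim} is the hard one: Fraser shows that under the weak separation condition one has $\dim_{\mathrm A} F = \dim_{\mathrm H} F$, so combined with $\dim_{\mathrm H} F < 1$ this yields \eqref{i:assdim}. For the converse \eqref{i:assdim}$\Rightarrow$\eqref{i:wsc}, the point is that failure of the weak separation condition forces arbitrarily small exact overlaps between distinct compositions of the IFS maps, and these overlaps can be scaled up to produce relatively dense configurations at arbitrarily fine scales; Fraser's theorem turns this into the equality $\dim_{\mathrm A} F = 1$. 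Since we also need $\dim_{\mathrm H} F < 1$, one notes that $\dim_{\mathrm H} F \leq \dim_{\mathrm A} F$, so \eqref{i:assdim} implies $\dim_{\mathrm H} F < 1$ automatically.

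Putting these three equivalences together closes the chain \eqref{i:lpor}$\Leftrightarrow$\eqref{i:porous}$\Leftrightarrow$\eqref{i:assdim}$\Leftrightarrow$\eqref{i:wsc}, completing the proof. The genuinely new input comes from Fraser's analysis of Assouad dimension for overlapping self-similar sets; the remaining steps are either definitional or standard one-dimensional facts about porosity.
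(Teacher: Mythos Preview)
Your proposal is correct and follows essentially the same approach as the paper: the equivalence \eqref{i:lpor}$\Leftrightarrow$\eqref{i:porous} is trivial in $\RR$, while \eqref{i:porous}$\Leftrightarrow$\eqref{i:assdim} and \eqref{i:assdim}$\Leftrightarrow$\eqref{i:wsc} are quoted from the cited references (the paper points to \cite[Theorems~5.1.6 and~7.2.4]{Fraser2020book} specifically). One minor terminological slip: failure of the weak separation condition means there are distinct compositions whose images come \emph{arbitrarily close without coinciding}, not ``arbitrarily small exact overlaps''; but since you are invoking Fraser's dichotomy as a black box this does not affect the validity of your argument.
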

\begin{proof}
    \eqref{i:lpor} $\Leftrightarrow$ \eqref{i:porous} is obvious. 
    We have \eqref{i:porous} $\Leftrightarrow$ \eqref{i:assdim} by \cite[Theorem~5.1.6]{Fraser2020book}. We have \eqref{i:assdim} $\Leftrightarrow$ \eqref{i:wsc} by \cite[Theorem~7.2.4]{Fraser2020book}. 
\end{proof}

\begin{rem}
    The FUP can be used to improve some of the nonlinear arithmetic results from Section~\ref{sec: arithmetic}. 
    For instance, by Theorem~\ref{thm: largedimfup}, the assumption~\eqref{e:twomeascondition} in the statement of Theorem~\ref{thm: measure main} can be weakened to the same assumption with $\geq$ rather than $>$. 
    Moreover, if we assume our self-similar set $F \subset \RR$ is AD-regular then the ``$\Haus F >$'' conditions in the abstract can be replaced by the corresponding conditions with ``$\Haus F \geq$''. 
\end{rem}

We now turn to the proofs of Theorems~\ref{thm: cladektao} and~\ref{thm: blt} from Section~\ref{sec: large linear space}. 
We say that an $h$-dependent family of sets $X_h$ is $\delta$-regular on scales $[\alpha_0,\alpha_1]$ if it supports some locally finite measure $\nu$ for which there exists $C\geq 1$ (independent of $h$) such that for all intervals $I$ centred at a point in $X_h$ with length $|I| \in (\alpha_0,\alpha_1)$ we have $C^{-1}|I|^{\delta} \leq \mu(I) \leq C|I|^{\delta}$. 
\begin{proof}[Proof of Theorem~\ref{thm: blt}]
    The $s > k/2$ case of Theorem~\ref{thm: blt} follows immediately from Theorem~\ref{thm: image fourier decay}, so we assume $s \leq k/2$. 
    The sets $X_h$ and $Y_h$ from Lemma~\ref{lma: fup} are $s$-regular on scales $[h,1]$ and $[1,h^{-1}]$ respectively. 
    Since the support of $\mu$ is not contained in any proper subspace, the non-orthogonality hypothesis from \cite[Theorem~1.5]{BLTfup} holds. 
    Therefore there exists $\beta > 0$ such that $X_h,Y_h$ satisfy a FUP\footnote{This paper uses a slightly different formulation of the FUP than~\eqref{e: fupdefine}, but it is not difficult to show the two are equivalent.} with exponent $k/2 - s + \beta$. 
    In this case,~\eqref{e: aftercauchyschwarz} becomes that for all $\varepsilon > 0$, 
    \begin{equation*}
    |\widehat{\mu_f}(\xi)| \ll 2^{(-\kappa_2 + (2-\gamma)\kappa_* + (\gamma - 1)(k-\kappa_2) - (\gamma - 1)(k - 2 s + 2\beta) + \varepsilon)n/2} + 2^{-(2- \gamma)n}.
    \end{equation*}
    But $\kappa_* = \overline{\dim}_{\mathrm B} (\supp(\mu)) = \kappa_2 = s$, so this reduces to 
    \begin{equation}\label{e:simplifiedafterfup}
    |\widehat{\mu_f}(\xi)| \ll 2^{(-(\gamma - 1)\beta + \varepsilon/2) n}  + 2^{-(2- \gamma)n}.
    \end{equation}
    Since $\varepsilon$ can be made arbitrarily small relative to $\beta$, letting $\gamma = (2+\beta)/(1+\beta)$, we see that the exponent of Fourier decay for $\mu_f$ can be made arbitrarily close to $\beta / (1+\beta)$. 
\end{proof}

\begin{proof}[Proof of Theorem~\ref{thm: cladektao}]
    The sets $X_h$ and $Y_h$ from Lemma~\ref{lma: fup} are $k/2$-regular on scales $[h,1]$ and $[1,h^{-1}]$ respectively. 
    Therefore \cite[Theorem~1.9]{CladekTaoFUP} implies that $X_h,Y_h$ satisfy a FUP with some exponent $\beta > 0$. 
    Now as in~\eqref{e:simplifiedafterfup}, Lemma~\ref{lma: fup} means that~\eqref{e: aftercauchyschwarz} reduces to 
    \[
    |\widehat{\mu_f}(\xi)| \ll 2^{\varepsilon - (\gamma - 1)\beta} + 2^{-(2- \gamma)n}
    \]
    for $\epsilon > 0$ which can be made arbitrarily small. 
    Letting $\gamma = (2+\beta)/(1+\beta)$, the exponent of Fourier decay can be made arbitrarily close to $\beta / (1+\beta)$. 
\end{proof}

There has been much interest in estimating the exponent $\beta$ in the FUP. 
For instance, for $s$-AD-regular sets in the line with $s \geq 1/2$ and multiplicative AD-regularity constant $C$, Cohen's result was proved in an earlier celebrated paper of Bourgain and Dyatlov~\cite{BourgainDyatlovFUP}. The dependence of the exponent on $s$ and $C$ was described in~\cite{JinZhangFUP,JZZ}. 
Estimates for the exponents are also given in~\cite{CladekTaoFUP,DJfupDolgopyat}. 
For the special case of missing digit sets in the line, there has been yet more work~\cite{EswarathasanHan,LaiShi}. 
These estimates of course translate to the exponent of Fourier decay for the pushforward measure. In particular, while Theorem~\ref{thm: blt} was known in the $k=1$ case~\cite{BB25,ACWW25}, its proof gives new quantification of the exponent (\cite{MS18} also gives quantification but only applies when the IFS is homogeneous). A consequence of the quantifications from~\cite{JinZhangFUP,DJfupDolgopyat} is the following. 
\begin{rem}
    At least in the $k=1$ case of Theorem~\ref{thm: largedimfup}, the improvement $\upsilon$ can be taken to depend only on $\mu$ but not on $f$. 
    Similarly, in the $k=1$ case of Theorem~\ref{thm: blt}, the decay exponent depends only on $\mu$ but not on $f$. 
    Indeed, iterate the IFS to some large fixed level $N$ so that for each copy $\mu^{(\omega)}$ and all $x,y$ in the convex hull of $\supp(\mu^{(\omega)})$ we have $f''(x)\leq 1.01 f''(y)$. Then on scales $(0,\mbox{diam}(\supp(\mu^{(\omega)}))$, $\mu^{(\omega)}$ is AD-regular with the same exponent as $\mu$ and at most twice the constant as for $\mu$. Therefore the gain in exponent for the decay of $|\widehat{\mu^{(\omega)}_f}|$ from~\cite{JinZhangFUP} and~\cite{DJfupDolgopyat} respectively depends only on $\mu$. 
    Thus the gain in exponent for the decay of $|\widehat{\mu_f}|$ depends only on $\mu$ (the multiplicative constant of course also depends on $f$). 
\end{rem}

\subsection{Quadratic maps}
Theorem~\ref{thm: image fourier decay} handles maps $\mathbb{R}^k\to\mathbb{R}$. However, our method extends beyond this case. We illustrate some concrete examples.

Let $f\colon \mathbb{R}^k\to\mathbb{R}^d$ be a quadratic map. Namely, $f=(f_1,\dots,f_d)$ where $f_1,\dots,f_d$ are all quadratic polynomials with $k$ variables. More specifically, for each $i\in\{1,\dots,d\}$, there are real numbers $c_{i,p,q}$ where $1 \leq p \leq q \leq k$ such that
\[
f_i(x_1,\dots,x_k)=\sum_{p,q} c_{i,p,q} x_px_q+\text{linear part}.
\]
Here the linear part is an affine map. 

Let $\bv\in\mathbb{S}^{d-1}$ be a vector. Consider the following projection of $f$ along the direction $\bv$: 
\[
f_\bv=\sum_{i=1}^d v_i f_i.
\]
For each fixed $\bv$, the map $f_\bv \colon \mathbb{R}^k\to\mathbb{R}$ can be treated with the method that we have developed. First consider the Gaussian curvature of the graph of $f_\bv$. 
In order to do this, we compute the Hessian matrix of $f_\bv$. It is a symmetric $k\times k$ matrix whose $(p,q)$ entry is
\[
H^{f,\bv}_{pq}=\frac{\partial}{\partial x_p}\frac{\partial}{\partial x_q} f_\bv=\sum_{i=1}^d c_{i,p,q}v_i \frac{\partial}{\partial x_p}\frac{\partial}{\partial x_q} x_p x_q.
\]
As long as $\bv$ is fixed, it is clear that the determinant $\det H^{f,\bv}_{p,q}$ is a constant (either zero or non-zero). 
Supposing that this determinant is nonzero, we can apply Theorem~\ref{thm: image fourier decay} to get Fourier decay for $\mu_{f_\bv}$, namely 
\begin{equation}\label{e:quadraticonedirectiondecay}
|\widehat{\mu_{f_\bv}}(\xi)|\ll_\bv |\xi|^{-\sigma},
\end{equation}
where $\sigma$ and $\mu$ are as in Theorem~\ref{thm: image fourier decay}. 
Here the constant implicit in $\ll_\bv$ depends on the choice of $\bv$, but if $\det H^{f,\bv}$ is nonzero for all $\bv$ then we can remove the dependence on $\bv$ as described in the following result. 

\begin{cor}
    Let $f \colon \RR^k \to \RR^d$ be a quadratic map with $\det H^{f,\bv} \neq 0$ for all $\bv$, let $\mu$ be a self-similar measure on $\RR^k$ with $\kappa_2 > k/2$, and let $\sigma > 0$ be as in the statement of Theorem~\ref{thm: image fourier decay}. Then for $\bxi \in \RR^d \setminus \{\mathbf{0}\}$, 
    \[
    |\widehat{\mu_f}(\bxi)|\ll |\bxi|^{-\sigma}.
    \]
\end{cor}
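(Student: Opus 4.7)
The plan is to reduce to the one-direction estimate~\eqref{e:quadraticonedirectiondecay} and then remove the dependence on the direction by a compactness argument on $\mathbb{S}^{d-1}$. For any nonzero $\bxi \in \RR^d$, write $\bxi = |\bxi|\bv$ with $\bv = \bxi / |\bxi| \in \mathbb{S}^{d-1}$. Directly from~\eqref{e:ftdef},
\[
\widehat{\mu_f}(\bxi) = \int_{\RR^k} e^{-2\pi i |\bxi| \langle \bv, f(\bx)\rangle} d\mu(\bx) = \int_{\RR^k} e^{-2\pi i |\bxi| f_{\bv}(\bx)} d\mu(\bx) = \widehat{\mu_{f_{\bv}}}(|\bxi|).
\]
Since $\det H^{f,\bv} \neq 0$ by hypothesis, the graph of $f_{\bv}$ has non-vanishing Gaussian curvature over all of $\RR^k$, so Theorem~\ref{thm: image fourier decay} applies and gives $|\widehat{\mu_f}(\bxi)| \leq C_{\bv} |\bxi|^{-\sigma}$ with $\sigma$ as in that theorem. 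The problem reduces to showing that $C_{\bv}$ can be taken uniform in $\bv$.

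The next step is to trace where the constant $C_{\bv}$ depends on the map in the proof of Theorem~\ref{thm: image fourier decay}. The fractal invariants $\kappa_2, \kappa_*, \kappa_p, d_q, d_\infty$ and the multiplicity constant in Claim~\ref{claim: assouad} depend only on $\mu$. Inspecting Section~\ref{sec: provequantitative}, the map $f_{\bv}$ enters only in two ways: through the $C^2$-norm of $f_{\bv}$ on a compact neighbourhood of $\supp(\mu)$, which controls the error term in the linearisation step~\eqref{e:linearise}, and through a positive lower bound for $|\det H^{f,\bv}|$ on $\supp(\mu)$, which governs the separation-of-tangents constant in Lemma~\ref{lma: bounded overlaps}. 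Because $f$ is quadratic, $\bv \mapsto f_{\bv}$ is linear, so $\bv \mapsto \|f_{\bv}\|_{C^2}$ is a continuous (indeed linear) function on $\mathbb{S}^{d-1}$; by compactness it is bounded. Similarly, $H^{f,\bv}$ is a constant matrix with entries linear in $\bv$, and by hypothesis the continuous function $\bv \mapsto |\det H^{f,\bv}|$ is strictly positive on the compact sphere $\mathbb{S}^{d-1}$, so it is bounded away from zero. Thus both relevant quantities are bounded uniformly in $\bv$, which yields a uniform bound for $C_{\bv}$.

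The main obstacle is purely bookkeeping: one must verify that every implicit constant in Lemmas~\ref{lma: compactly supported}, \ref{lma: lifttograph}, \ref{lma: startingdecomp} and~\ref{lma: bounded overlaps}, and in the chains of inequalities culminating in~\eqref{i:uselp}, can be expressed as a function of $\mu$ and of an upper bound on $\|f_{\bv}\|_{C^2}$ together with a lower bound on $|\det H^{f,\bv}|$. Once this routine check is done, feeding the uniform bounds into~\eqref{e:quadraticonedirectiondecay} yields $|\widehat{\mu_f}(\bxi)| \ll |\bxi|^{-\sigma}$ with an implicit constant independent of $\bv$, as required.
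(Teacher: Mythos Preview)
Your proposal is correct and follows essentially the same approach as the paper: reduce to the directional identity $\widehat{\mu_f}(\bxi)=\widehat{\mu_{f_{\bv}}}(|\bxi|)$, apply Theorem~\ref{thm: image fourier decay} for each $\bv$, and then observe that the implicit constant depends on $\bv$ only through continuous quantities (the paper singles out $\det H^{f,\bv}$, you additionally and correctly note the $C^2$-norm) which are uniformly controlled by compactness of $\mathbb{S}^{d-1}$.
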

\begin{proof}
From the proof of Theorem~\ref{thm: image fourier decay} (recall in particular Lemma~\ref{lma: bounded overlaps} giving separation of tangents), the implicit constant in~\eqref{e:quadraticonedirectiondecay} depends continuously on $\det H^{f,\bv}$, which in turn varies smoothly with $\bv$. 
Therefore since $\det H^{f,\bv}\neq 0$ for all $\bv$ and $\mathbb{S}^{d-1}$ is compact, we can improve the Fourier decay property to $|\widehat{\mu_{f_\bv}}(\xi)|\ll |\xi|^{-\sigma}$
where $\ll$ does not depend on $\bv$. 
But for each $\bxi\in\mathbb{R}^d$, 
\[
\widehat{\mu_{f}}(\bxi) = \widehat{\mu_{f_{\bxi/|\bxi|}}}(|\bxi|),
\]
completing the proof. 
\end{proof}

\begin{exm}\label{exm: quad analytic}
    Let $f\colon \mathbb{R}^2\to\mathbb{R}^2$ be given by 
    \[
    f(x,y) = (y^2-x^2,2xy).
    \]
    We see that for $\bv\in\mathbb{S}^1$, 
    \[
    \det H^{f,\bv}=-4v^2_1-4v^2_2 = -4.
    \]
    Then for any self-similar measure $\mu$ on $\mathbb{R}^2$ with $\kappa_2 > 1$, the measure $\mu_f$ on $\mathbb{R}^2$ has polynomial Fourier decay with exponent $\sigma$ indicated in Theorem~\ref{thm: image fourier decay}. 
\end{exm}

\begin{exm}
    Let $f \colon \mathbb{R}^k\to\mathbb{R}^k$ be
    \[
    f(x_1,\dotsc,x_k) = (x_1^2,\dotsc,x_k^2). 
    \]
    For $\bv\in\mathbb{S}^{k-1}$, 
    \[
    \det H^{f,\bv}= 2^k \prod_{i=1}^k v_i.
    \]
    Let $\mu$ be a self-similar measure on $\RR^k$ with $\kappa_2 > k/2$. 
    Then for all $\varepsilon > 0$, for directions $\bv$ with $\min\{|v_1|,\dots,|v_k|\}>\varepsilon$, we have 
    \[
    |\widehat{\mu_{f}}(t\bv)|\ll_{\varepsilon} |t|^{-\sigma}
    \]
    where $\sigma$ is again indicated in Theorem~\ref{thm: image fourier decay} and the implicit constant depends on $\varepsilon$ but not $\bv$. 
\end{exm}

\subsection{Holomorphic maps}
One might expect that images of self-similar measures under holomorphic (complex analytic) maps $\mathbb{C}\to\mathbb{C}$ should have good polynomial Fourier decay properties. Indeed, Example~\ref{exm: quad analytic} gives a hint of this, since the map in this example is simply $z \mapsto z^2$. 
Let $D \subset \CC$ be non-empty and open and let $f \colon D \to \CC$ be holomorphic. Write  
\[
f(x,y)=U(x,y)+i V(x,y),
\]
and for $\bv=(v_1,v_2)\in\mathbb{S}^1$ write $f_\bv \coloneqq v_1U+v_2V$. 
The following lemma gives further indication for why holomorphic pushforwards are easier to work with than, say, real-analytic ones. 
\begin{lma}\label{lma: complexproperties}
    The Hessian matrix $H^{f,\bv}$ has eigenvalues $\pm |f''|$ with orthogonal eigenspaces. Moreover, 
    \begin{gather*}
    |\det H^{f,\bv}|=|(U_{xy},V_{xy})|^2_2 = |f''|^2, \\
    \{\det H^{f,\bv}=0\} = \{U_{xy}=V_{xy}=0\} = \{U_{xx}=U_{yy}=0\} = \{f''=0\}. 
    \end{gather*}
    In particular, these quantities are independent of $\bv$. 
\end{lma}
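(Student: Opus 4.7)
The plan is to express everything in terms of $U_{xx}$ and $U_{xy}$ using the Cauchy--Riemann equations, so that the $2\times 2$ matrix $H^{f,\bv}$ takes a very explicit form from which all claims read off.

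First I would record the consequences of $f$ being holomorphic. Differentiating $U_x = V_y$ and $U_y = -V_x$ once more yields the four identities
\[
V_{xy} = U_{xx}, \qquad V_{xx} = -U_{xy}, \qquad U_{yy} = -U_{xx}, \qquad V_{yy} = U_{xy},
\]
so that both $\mathrm{Hess}(U)$ and $\mathrm{Hess}(V)$ are trace-free with negated diagonal entries. Substituting into $H^{f,\bv} = v_1 \mathrm{Hess}(U) + v_2 \mathrm{Hess}(V)$ gives
\[
H^{f,\bv} \;=\; \begin{pmatrix} v_1 U_{xx} - v_2 U_{xy} & v_1 U_{xy} + v_2 U_{xx} \\ v_1 U_{xy} + v_2 U_{xx} & -(v_1 U_{xx} - v_2 U_{xy}) \end{pmatrix},
\]
which is manifestly real symmetric and trace-free. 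Using $v_1^2 + v_2^2 = 1$, a direct expansion gives
\[
\det H^{f,\bv} \;=\; -(v_1 U_{xx} - v_2 U_{xy})^2 - (v_1 U_{xy} + v_2 U_{xx})^2 \;=\; -(U_{xx}^2 + U_{xy}^2),
\]
so the determinant is independent of $\bv$.

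Next I would identify this quantity with $|f''|^2$. From $f' = U_x + i V_x = U_x - i U_y$ I differentiate once more to get $f'' = U_{xx} - i U_{xy}$, hence $|f''|^2 = U_{xx}^2 + U_{xy}^2$, which simultaneously equals $|\det H^{f,\bv}|$ and $|(U_{xy},V_{xy})|_2^2 = U_{xy}^2 + U_{xx}^2$ by the identity $V_{xy} = U_{xx}$. Since $H^{f,\bv}$ is real symmetric with trace $0$ and determinant $-|f''|^2$, its eigenvalues are $\pm|f''|$; when these are distinct the eigenspaces of a symmetric matrix are automatically orthogonal, and when $f''=0$ the whole matrix vanishes so the statement is trivial.

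Finally the chain of equalities of vanishing loci is read off from the same identities: $\det H^{f,\bv} = 0 \iff U_{xx} = U_{xy} = 0$, which via $V_{xy} = U_{xx}$ is $\{U_{xy} = V_{xy} = 0\}$, via $V_{xx} = -U_{xy}$ and $U_{yy} = -U_{xx}$ matches the remaining pair of vanishing conditions in the lemma, and via the formula $f'' = U_{xx} - iU_{xy}$ is the same as $\{f''=0\}$. The whole argument is a direct calculation, so there is no real obstacle once the Cauchy--Riemann relations are used to eliminate $V_{xx},V_{xy},V_{yy},U_{yy}$ in favour of $U_{xx}$ and $U_{xy}$; the only conceptual point worth highlighting is that it is precisely holomorphy (trace-freeness and the negated off-diagonal symmetry of $\mathrm{Hess}(U)$ and $\mathrm{Hess}(V)$) that makes the $\bv$-dependence drop out of $\det H^{f,\bv}$.
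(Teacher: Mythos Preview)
Your proof is correct and follows essentially the same route as the paper: both compute $H^{f,\bv}$ explicitly via the Cauchy--Riemann equations and read off the determinant, eigenvalues, and vanishing locus. Your version is marginally more streamlined---you reduce to the pair $(U_{xx},U_{xy})$ rather than the paper's $(U_{xy},V_{xy})$ (equivalent since $V_{xy}=U_{xx}$), make the $\bv$-independence of $\det H^{f,\bv}$ explicit via the expansion, and extract the eigenvalues from the trace-zero observation rather than by brute force---but these are cosmetic differences only.
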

\begin{proof}
Using the Cauchy--Riemann equations $U_x = V_y$ and $U_y = -V_x$ (where we write $U_x$ for $\frac{\partial U}{\partial x}$ etc.), 
\begin{equation}\label{e:complexhessian}
H^{f,\bv}(x,y) = \begin{pmatrix} v_1 V_{xy} - v_2 U_{xy} & v_1 U_{xy} + v_2 V_{xy} \\ v_1 U_{xy} + v_2 V_{xy} & -v_1 V_{xy} + v_2 U_{xy} \end{pmatrix}.
\end{equation}
Therefore 
\[
\det H^{f,\bv}(x,y)=-(v_1V_{xy}-v_2U_{xy})^2-(v_1U_{xy}+v_2V_{xy})^2.
\]
This gives $|\det H^{f,\bv}|=|(U_{xy},V_{xy})|^2_2$ and $\{\det H^{f,\bv}=0\} = \{U_{xy}=V_{xy}=0\}$. 
The eigenvalues of $H^{f,\bv}(x,y)$ can be calculated from~\eqref{e:complexhessian} with a direct calculation. 
The equality $\{U_{xy}=V_{xy}=0\} = \{U_{xx}=U_{yy} \} = 0$ comes from Cauchy--Riemann again. 
By the relation 
\[
\frac{df}{dz}=\frac{1}{2}\left(\frac{\partial}{\partial x}-i\frac{\partial}{\partial y}\right)(U+iV),
\]
we see that
\[
\frac{d^2f}{d^2z} = \frac{1}{4}\left(\frac{\partial^2}{\partial^2 x}-\frac{\partial^2}{\partial^2 y}-2i \frac{\partial}{\partial x}\frac{\partial}{\partial y}\right)(U+iV).
\]
This gives the final equalities 
\[ \{\det H^{f,\bv}=0\}=\{f''=0\}, \qquad |(U_{xy},V_{xy})|^2_2 = |f''|^2. \]
\end{proof}
In particular, $\{\det H^{f,\bv}=0\}$ is the whole of $\CC$ if and only if $U$ is affine (in which case $V$ and hence also $f$ are affine); otherwise, $\{\det H^{f,\bv}=0\}$ is a discrete set. 

As an example, if $f(z)=z^3$, then we see that
\[
U(x,y)=x^3-3xy^2; \qquad V(x,y)=-(y^3-3x^2y).
\]
Then $U_{xy}=-6y, V_{xy}=-6x$ and the only place that $U_{xy}=V_{xy}=0$ is $z=x+iy=0$. 

We can now use Lemma~\ref{lma: complexproperties} to prove the van der Corput type result Theorem~\ref{thm: complexvdc} for holomorphic pushforwards of self-similar measures on $\CC$. In fact, we will prove it with the improved exponent $\frac{\kappa_2 - 1}{1 + \kappa_* + \eta \kappa_* / d_{\infty}}$. 
A related result in the special case when $f''$ never vanishes and the self-similar measure is homogeneous with non-trivial rotations is~\cite[Theorem~3.1]{MosqueraOlivo}. 

\begin{proof}[Proof of Theorem~\ref{thm: complexvdc}]
    Since $f$ is nonlinear, the intersection of $\{\det H^{f,\bv}=0\}\subset\mathbb{C}$ (which is discrete) with $\supp(\mu)$ (which is compact) is indeed finite. 
    The idea of the proof will be to take into account the fact that the nonlinearity gets weaker as we approach this set. We will bound the part of $\mu$ lying close to this set using the Frostman exponent, and bound the image of the rest of $\mu$ using the methods of proof from Theorem~\ref{thm: image fourier decay}. 

    Let $\eta \coloneqq l-2$ and assume for now that $\eta > 0$. Then the all zeros of $f''$ which intersect $\supp(\mu)$ have order at most $\eta$. 
    For a set $X \subset \mathbb{C}$ let $X^{\delta} \coloneqq \bigcup_{z \in X} B_r(z)$ denote the $\delta$-neighbourhood of $X$. 
    For some $C>0$ and all small enough $\delta$ we have 
    \[
    \{\det H^{f,\bv}=0\}^{C\delta} \supset \{|\det H^{f,\bv}|<\delta^{2\eta} \}, 
    \]
    uniformly for $\bv$. 
    Fix constants $d_{\infty}',H',\kappa_*',\kappa_p'$ very close to the respective constants as in the proof of Theorem~\ref{thm: allparams}. 
    Then uniformly across $\bv$, 
    \[
    \mu(\{\det H^{f,\bv}=0\}^\delta )\ll \delta^{d_{\infty}'}. 
    \] 
    The upshot is that 
    \begin{equation*}
    \mu(\{|\det H^{f,\bv}|<\delta^2 \}) \ll \delta^{d_{\infty}'/\eta},
    \end{equation*}
    with the constant describing how small $\delta$ must be and the implicit constant uniform across $\bv \in\mathbb{S}^1$. 

    Define the following numbers: 
    \begin{align*}
        \gamma &\coloneqq \frac{2(\kappa_* d_{\infty} + \eta \kappa_* + d_{\infty})}{\kappa_* d_{\infty} + \eta \kappa_* + d_{\infty} \kappa_2}, \\
        \tau &\coloneqq \frac{\kappa_2 - 1}{\kappa_* d_{\infty} + \eta \kappa_* + d_{\infty}}.
    \end{align*}
    Since $\kappa_* \geq \kappa_2 > k/2$, we have that $1 < \gamma < 2$ and since also $\eta > 0$ we have $0< \tau < 1/\gamma < 1$. 

    We now assume $\mu$ is homogeneous for simplicity and follow Lemma~\ref{lma: homogquant}. 
    Since $\mu$ is supported in the plane it is automatically non-expanding, and the extension of the proof to the non-homogeneous case follows the proof of Theorem~\ref{thm: allparams} from Lemma~\ref{lma: homogquant}. 
    Fix $\bv \in \mathbb{S}^1$; importantly, all the estimates in $\ll$ and $\gg$ in the following calculation are independent of $\bv$. 
    Consider the function $f_{\bv} \colon \CC \to \RR$ and the associated function $T=T_\bv$. 
    Letting $T_\bv(z) = (z,f_\bv(z)) \in \RR^3$ and $\bxi \coloneqq (0,\xi) \in \RR^2$, Lemma~\ref{lma: lifttograph} gives 
    \begin{equation}\label{e:directionalFT}
    \widehat{\mu_f}(\xi \mathbf{v}) = \widehat{\mu_{f_\bv}}(\xi) = \widehat{\mu_{T_\bv}}(\bxi).
    \end{equation}
    We will estimate this quantity for $\xi \in \RR$ satisfying $|\xi| = 2^{\gamma n}$. 
    Break the sum from~\eqref{eqn: measure decomposition} into two parts, namely those cubes whose centre intersects the $|\bxi|^{-\tau}$-neighbourhood of $\{\det H^{f,\bv}=0\} \cap \supp(\mu)$ (call this $\mathcal{D}_n^{(1)}$) and those which do not (call this $\mathcal{D}_n^{(2)}$). 
    Fix $\varepsilon_0 > 0$. Then there are constants $\varepsilon_1, \varepsilon_2, \dotsc \in (0,\varepsilon_0)$ such that the rest of the calculations in the proof hold. 
    Since $\tau < 1/\gamma$, we can bound  
    \begin{align}\label{e:complexfrostman}
    \begin{split}
    \sum_{D_n \in \mathcal{D}_n^{(1)}}\sum_{\mu_{\omega,n}\sim D_n} |\mu_{\omega,n}| |\widehat{\mu}(r_n O_n(\bxi_{\omega,n}))| &\ll \mu\left( \bigcup_{D_n \in D_n^{(1)}} \mu(D_n) \right) \\ 
    &\ll |\bxi|^{-\tau d_{\infty} - \varepsilon_1} = 2^{- (\tau d_{\infty} \gamma - \varepsilon_2) n}. 
    \end{split}
    \end{align}

    Next we consider the cubes in $\mathcal{D}_n^{(2)}$. At each point in such a cube, by Lemma~\ref{lma: complexproperties}, the magnitude of each eigenvalue of $H^{f,\bv}$ is $\gg |\bxi|^{-\eta \tau}$. 
    Modifying the proof of Lemma~\ref{lma: bounded overlaps} on separation of tangents, we see that the points $\bxi_{D_n} r_n$ are $|\bxi| 2^{-2n} |\bxi|^{-\eta \tau}$ separated with bounded multiplicity. 
    Therefore in the proof of Claim~\ref{claim: assouad}, the points $\mathbf{\eta}_{D_n}$ are $|\bxi|^{-\eta \tau} 2^{-n}$ separated with bounded multiplicity, and for any unit square $D$, 
    \[
    \# ( \# \{ \bxi_{D_n} r_n \}_{D_n \in \mathcal{D}_n^{(2)}} \cap D) \ll \left( \frac{2^n/|\bxi|}{|\bxi|^{-\eta \tau} 2^{-n}} \right)^{\kappa_* + \varepsilon_3} = 2^{n \kappa_*(2-\gamma + \gamma \eta \tau + \varepsilon_4)}. 
    \] 
    Therefore the sum over $\mathcal{D}_n^{(2)}$ can be bounded by 
    \begin{equation}\label{e:complexmain} 
    \ll 2^{n(- \kappa_2 + (2-\gamma + \gamma \eta \tau)\kappa_* + (2-\kappa_2) (\gamma - 1) + \varepsilon_5))/2}. 
    \end{equation}
    We also have the error term from the Taylor expansion: 
    \begin{equation}\label{e:complexerror}
    \ll \frac{|\bxi|}{2^{2n}} = 2^{-(2 - \gamma )n}. 
    \end{equation}
    By our choice of $\tau$ and $\gamma$ (and a direct but tedious algebraic manipulation), the three terms on the right hand sides of~\eqref{e:complexfrostman},~\eqref{e:complexmain},~\eqref{e:complexerror} are equal (up to a small $\varepsilon$ factor). 
    The exponent of decay can therefore be made arbitrarily close to $(2-\gamma)/\gamma$ which one can check is the desired exponent $\frac{\kappa_2 - 1}{1 + \kappa_* + \eta \kappa_* / d_{\infty}}$. 

    Finally, if $\eta = 0$, then $f''$ never vanishes on $\supp(\mu)$, and in light of~\eqref{e:directionalFT} the proof is essentially the same as in Theorem~\ref{thm: image fourier decay}. 
\end{proof}

    Of course, one could follow the proof of Theorem~\ref{thm: image fourier decay} to obtain Fourier decay bounds in the setting of Theorem~\ref{thm: complexvdc} involving $\kappa_p$ and $d_q$; for simplicity we have stated only the $p=q=2$ case. 
The deduction of Theorem~\ref{thm: complexvdc} from the proof of Theorem~\ref{thm: image fourier decay} is somewhat similar to the deduction of the result on real-analytic pushforwards $\RR \to \RR$ from the result on pushforwards under maps with non-vanishing second derivative in~\cite{BB25}. 

If we are not concerned with quantifying the decay then are several places where Theorem~\ref{thm: complexvdc} can be improved. 
In particular, one can ask the following: 
\begin{itemize}
    \item Can the condition $\kappa_2 > 1$ be replaced by some weaker irreducibility assumption that still rules out the obstruction from Section~\ref{sec: large linear space}? 
    \item Is the holomorphicity of $f$ essential? 
\end{itemize}
The above questions can all be answered by the following conjecture. 
\begin{conj}\label{conj: dream conjecture}
    Let $k,d$ be positive integers. Let $\mu$ be a self-similar measure on $\mathbb{R}^k$ which is not supported in any proper affine hyperplane. 
    Let $U \subset \RR^k$ be an open neighbourhood of $\supp(\mu)$ and let $f\colon U \to\mathbb{R}^d$ be real-analytic and non-degenerate in the sense that the graph of $f$ is not contained in any proper affine hyperplane of $\RR^{k+d}$. 
    Then there exists $\sigma>0$ such that
    \[
    |\widehat{\mu_f}(\bxi)|\ll |\bxi|^{-\sigma}.
    \]
\end{conj}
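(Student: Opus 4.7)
The first step of the plan is to reduce the $d \geq 1$ case to $d = 1$. For $\bxi \in \RR^d \setminus \{\mathbf{0}\}$, write $\bxi = t \bv$ with $t > 0$ and $\bv \in \mathbb{S}^{d-1}$, so $\widehat{\mu_f}(\bxi) = \widehat{\mu_{f_\bv}}(t)$ where $f_\bv = \langle \bv, f\rangle \colon U \to \RR$. It suffices to prove $|\widehat{\mu_{f_\bv}}(t)| \ll t^{-\sigma}$ with implicit constant uniform in $\bv \in \mathbb{S}^{d-1}$. The hyperplane non-degeneracy of $f$ guarantees that for each $\bv \neq \mathbf{0}$ the function $f_\bv$ is a real-analytic non-affine function on $U$, for otherwise $\Gamma_f$ would sit inside a proper affine hyperplane of $\RR^{k+d}$.

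Next I would stratify by order of vanishing of derivatives of $f_\bv$. For fixed $\bv$, the Hessian determinant $\det H^{f_\bv}$ is real-analytic on $U$, and since $f_\bv$ is not affine, some higher-order Taylor coefficient is non-zero at every point of $U$. Partition $\supp(\mu)$ according to the order of vanishing of derivatives. On the open stratum where $\det H^{f_\bv} \neq 0$, apply Theorem~\ref{thm: image fourier decay} directly when $\kappa_2 > k/2$; when $\kappa_2 \leq k/2$, use the hyperplane non-concentration of $\mu$ together with Backus--Leng--Tao-type fractal uncertainty as in Theorem~\ref{thm: blt}, or Proposition~\ref{p: cladektao} in the odd-dimensional AD-regular case. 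On the degenerate locus, adapt the van der Corput-type argument from Theorem~\ref{thm: complexvdc}: split the Fourier integral into the portion of $\mu$ inside a shrinking tubular neighborhood of the locus (controlled by the Frostman exponent of $\mu$, which is positive by the last bullet of Section~\ref{ss:dims}) and its complement, where the Hessian is quantitatively bounded away from zero and Theorem~\ref{thm: image fourier decay} applies with a scale-dependent penalty.

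The delicate case is when $\supp(\mu)$ is entirely contained in a proper real-analytic subvariety $V \subsetneq U$. Here the natural approach is induction on $k$: using a local analytic parametrisation of $V$, push $\mu$ forward to a measure on $\RR^{\dim V}$ and reduce $\mu_{f_\bv}$ to the pushforward of this lower-dimensional measure under $f_\bv$ composed with the parametrisation. The hyperplane non-concentration of $\mu$ in $\RR^k$ would need to be transferred, via a careful choice of chart respecting the self-similar orbit structure, to analogous non-concentration of the pushed-forward measure inside the parameter space, so that the inductive hypothesis applies.

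The main obstacle is that self-similarity is not preserved under real-analytic (as opposed to affine) coordinate changes, so the previous paragraph's induction does not close inside the class of self-similar measures: after composing with a chart on $V$, the image measure is no longer self-similar. Bridging this gap would likely require building a Fourier decay theory for a substantially broader class of measures, roughly those that are self-similar up to a real-analytic conjugation, together with an appropriate analogue of the Tits alternative (Theorem~\ref{thm: tits}) for the resulting ``twisted'' orbit semigroups. A second, more immediate obstacle is that the conjecture omits the non-expanding hypothesis, so even for $k \geq 3$, $d=1$, and $f$ with non-vanishing Hessian on $\supp(\mu)$, the rotational parts of the generating IFS may produce exponential growth; controlling the analogue of the sum in~\eqref{e:NEdecomp} without an algebraic restriction on the orthogonal semigroup seems to require genuinely new ideas, likely involving effective equidistribution on compact extensions.
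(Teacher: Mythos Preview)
This statement is Conjecture~\ref{conj: dream conjecture}: the paper does \emph{not} prove it, and explicitly says it is to be addressed in future work (at best under the extra non-expanding hypothesis). So there is no ``paper's own proof'' to compare against; your proposal should be read as a plan of attack on an open problem, and indeed you flag several of its gaps yourself.

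There is, however, one substantial gap that you do not flag. Your stratification tacitly assumes that the degenerate locus $\{\det H^{f_\bv}=0\}$ is either empty or a proper analytic subvariety of $U$. But for $k\ge 2$ there are plenty of non-affine real-analytic $f_\bv\colon U\to\RR$ whose Hessian determinant vanishes \emph{identically}: take $f_\bv(x_1,x_2)=x_1^2$, or any function depending effectively on fewer than $k$ variables, or more generally any $f_\bv$ whose graph is ruled. This is exactly the obstruction discussed in Section~\ref{sec: large linear space}. In that situation your ``open stratum'' is empty, your van der Corput splitting is vacuous, and your fallback (``$\supp(\mu)$ contained in a proper subvariety $V$'') does not trigger either, since $V=U$. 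The real content of the conjecture, and the reason the hypothesis ``$\mu$ not supported in a proper affine hyperplane'' is there, is precisely to rule out the bad configurations that arise when $\det H^{f_\bv}\equiv 0$; exploiting that hypothesis requires new ideas beyond anything in this paper.

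Two smaller points. First, even where $\det H^{f_\bv}\ne 0$ for each fixed $\bv$, you need the implicit constant in Theorem~\ref{thm: image fourier decay} uniformly over $\bv\in\mathbb{S}^{d-1}$; since that constant depends on a lower bound for $|\det H^{f_\bv}|$ (via Lemma~\ref{lma: bounded overlaps}), and this lower bound can degenerate as $\bv$ varies, uniformity is not automatic. Second, the FUP inputs you invoke for the regime $\kappa_2\le k/2$ carry hypotheses (AD-regularity in Theorem~\ref{thm: blt}, $k$ odd and $k/2$-regularity in Proposition~\ref{p: cladektao}) that the conjecture does not assume, so they do not cover the general case.
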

 We address this conjecture in the subsequent article~\cite{BYqualitative} under some assumptions, for example we can prove it under the additional assumption that the self-similar measure is non-expanding. 
 This allows us to prove polynomial Fourier decay for a large class of nonlinear self-conformal measures on the plane. 

We now describe several more challenging open questions relating to the work done in this article. We would be thrilled to see them solved in the future. 
\subsection{Optimal bounds}\label{ss: optimality}
 We believe that the lower bounds in Theorem~\ref{thm: image fourier decay} are not optimal, due to the loss described in Remark~\ref{rem: loss}. 
\begin{ques}\label{q: optimal}
Under the conditions of Theorem~\ref{thm: image fourier decay} together with the additional assumption that the self-similar measure is AD-regular, is it the case that for each $\varepsilon > 0$ one can take $\sigma$ to be as large as $-\varepsilon + \kappa_2(\mu)/2$? 
\end{ques}
An affirmative answer to Question~\ref{q: optimal} would have major implications. It would solve, for instance, Conjecture~\ref{conj: multiplication} on nonlinear arithmetic of self-similar sets. 
It would also show that pushforwards of self-similar sets with the OSC by smooth maps whose graphs have positive Gaussian curvature are \emph{Salem sets}, i.e. sets $K$ satisfying $\dim_{\mathrm F} K = \dim_{\mathrm H} K$. Here, the \emph{Fourier dimension} of a set $K \subset \RR^k$ is defined by $\dim_{\mathrm F} K \coloneqq \sup \{\min\{\dim_{\mathrm F} \nu, k\}\}$, where the supremum is taken over finite Borel probability measures $\nu$ supported on $K$ (one always has $\dim_{\mathrm F} K \leq \dim_{\mathrm H} K$). In particular, the pushforward of the Cantor--Lebesgue measure (recall Example~\ref{exm: cantor lebesgue}) by $x \mapsto x^2$ would be a self-conformal Salem set with Fourier dimension $\frac{1}{2}\cdot \frac{\log 2}{\log 3} \approx 0.315$. 
 To date, no IFS attractor with non-integer Hausdorff dimension has been proved to be a Salem set. 
 Sets that have been proved to Salem include random constructions and sets from Diophantine approximation, see~\cite{Ma2,KaufmanSalem} and \cite[Chapter~17]{K85}. 

 \subsection{Arithmetic products of self-similar sets} 
 We have already posed Conjectures~\ref{conj: multiplication} and~\ref{conj:multconv} for the optimal thresholds for the nonlinear arithmetic questions considered in Section~\ref{sec: arithmetic}. 
 There are various variants of these conjectures which one could pose. 
 If Conjecture~\ref{conj:multconv} were true (even in the special case where the self-similar measures are AD-regular), then Conjecture~\ref{conj: multiplication} would follow; this is a consequence of the approximation argument used in the proofs of the results for self-similar sets in Section~\ref{ss:nonlinearproofs}. 
 We are not aware of any counterexample to the possibility that the conclusion of Conjecture~\ref{conj: multiplication} could be strengthened from positive measure to non-empty interior. 
 We summarise what is and is not known about the two- and three-fold arithmetic product of a self-similar set $F \subset (0,\infty)$: 
 \begin{itemize}
    \item If $\Haus F > 0.765\dotsc$ then $F \cdot F$ has positive Lebesgue measure by Theorem~\ref{thm: arith headline}, but we do not know if this must still be the case when $1/2 < \Haus F \leq 0.765\dotsc$. 
    We do not know any Hausdorff dimension\footnote{There is an $l^1$-dimension threshold, see for example~\cite{YuManifold}. However, compared to the Hausdorff dimension, the corresponding theory for $l^1$-dimension for self-similar systems is far less known.} threshold that guarantees $F \cdot F$ has non-empty interior. 
     \item If $\Haus F > 0.850\dotsc$ then $F\cdot F\cdot F$ has non-empty interior by Theorem~\ref{thm: arith headline}. 
     If $1/2 < \Haus F \leq 0.850\dots$ then $F\cdot F\cdot F$ has positive measure by Theorem~\ref{thm: enough variables mul} and \cite[Theorem~1.23]{H10} but we do not know if it necessarily has non-empty interior. 
     If $1/3 < \Haus F \leq 1/2$ then we do not know if $F\cdot F\cdot F$ must have positive measure or non-empty interior. 
 \end{itemize}

\section*{Acknowledgements}

\textbf{Thanks.} We thank Simon Baker for some very helpful discussions. 
We thank Amir Algom and two anonymous referees for helpful comments on a draft version of this manuscript. 

\textbf{Funding.} Most of this work was completed while AB was based at Loughborough University and supported by Simon Baker's EPSRC New Investigators Award (EP/W003880/1). AB was also supported by Tuomas Orponen's Research Council of Finland grant (no.~355453) at the University of Jyv\"askyl\"a. 
HY was financially supported by the Leverhulme Trust (ECF-2023-186). 

\textbf{Rights.} For the purpose of open access, the authors have applied a Creative Commons Attribution (CC-BY) licence to any Author Accepted Manuscript version arising from this submission.

\bibliographystyle{amsplain}

\end{document}